\definecolor{theblue}{rgb}{0.2,0.04,0.7}%
\definecolor{thered}{rgb}{0.8,0.04,0.07}%
\definecolor{thegreen}{rgb}{0.06,0.44,0.08}%
\definecolor{thegrey}{gray}{0.5}%
\definecolor{theshade}{gray}{0.92}%
\newtheorem*{theorem*}{Theorem}
\newtheorem{theorem}{Theorem}[section]
\newtheorem{proposition}[theorem]{Proposition}
\newtheorem*{proposition*}{Proposition}
\newtheorem{lemma}[theorem]{Lemma}
\newtheorem{corollary}[theorem]{Corollary}
\newtheorem{lemma-def}[theorem]{Lemma-Definition}
\theoremstyle{definition}
\newtheorem{definition}[theorem]{Definition}
\newtheorem{example}[theorem]{Example}
\newtheorem*{example*}{Example}
\newtheorem{construction}[theorem]{Construction}
\newtheorem*{construction*}{Construction}
\newtheorem{remark}[theorem]{Remark}
\newtheorem*{remark*}{Remark}
\newtheorem*{variant*}{Variant}
\newtheorem{notation}[theorem]{Notation}
\newtheorem*{notation*}{Notation}
\numberwithin{equation}{section}%
\newcommand{\CC}{\mathcal{C}}
\newcommand{\M}{\mathcal{M}}
\newcommand{\E}{\mathcal{E}}
\newcommand{\ii}{\infty}
\newcommand{\AAA}{\mathcal{A}}
\newcommand{\nch}{\operatorname{N}({\text{Ch}_{\geq 0}}^\text{op})}
\newcommand{\shat}{{\hat{s}}_0}
\title{On the Mac Lane $Q$-Construction for Exact $\infty$-Categories}
\date{January 2025}
\author{Ettore Aldrovandi\thanks{\url{aldrovandi@math.fsu.edu}} \and
  Arash Karimi\thanks{\url{akarimi@fsu.edu}} \and
  \small{Department of Mathematics, Florida State University, Tallahassee, FL 32306-4510}}
\begin{document}
\maketitle

\begin{abstract} 
  We extend McCarthy's stabilization construction to exact
  $\infty$-categories. This is achieved by constructing, for any
  functor from exact $\infty$-categories to a fixed stable
  $\infty$-category $\mathcal{A}$, a coherent chain complex in
  $\mathcal{A}$ that is an immediate generalization of Mac Lane's
  cubical $Q$-complex computing the stable homology of abelian groups.
\end{abstract}

\setcounter{tocdepth}{2}
\tableofcontents

\phantomsection  % required if using hyperref
\addcontentsline{toc}{section}{Introduction}  
\section*{Introduction}
\label{sec:introduction}

Eilenberg and Mac Lane's cubical $Q$-complex\footnote{Not to be confused with Quillen's $Q$-construction.} computes the stable homology of abelian groups and it has exceptionally good multiplicative properties (see \cite[Chap.\ 13]{MR1600246}).  In \cite{Main-McCarthy}, McCarthy introduced a modified \( Q \)-construction for exact categories. This construction builds upon Mac Lane’s framework to compute the spectrum homology of the algebraic \(K\)-theory of an exact category.  Unlike Quillen's topological approach, McCarthy's method uses chain complexes to model higher \( K \)-theory algebraically. Studying the stable homology of the $K$-theory spectrum for exact categories showed that the construction works with generalized homology theories. This provided a simpler algebraic model for the \( K \)-theory spectrum, with stable homotopy as its coefficients.

Barwick introduced \emph{exact \( \infty \)-categories} in \cite{Barwick_2015} as a natural \( \infty \)-categorical extension of Quillen’s exact categories. Using this framework, inspired by McLane's \( Q \)-construction, we extend McCarthy's \( Q \)-construction to \( \infty \)-categories. To do this, we first review this construction for ordinary exact categories, as described in \cite{Main-McCarthy}. For an exact category \( \mathcal{T} \), the category \( S_{[q]}\mathcal{T} \) is a subcategory of the functor category \( \text{Fun}(\text{Ar}[q], \mathcal{T}) \). It consists of functors \( F \) such that \( F(i \to i) = * \) (the zero object) for all \( i \in [q] \), and for every triple \( i \leq j \leq k \) in \( [q] \), the sequence 
\[
F(i \to j) \to F(i \to k) \to F(j \to k)
\]
is a short exact sequence in $\mathcal{T}$.  This structure allows the construction of exact functors
\[
d_i(k): S_2^{(n)}\mathcal{T} \to S_2^{(n-1)}\mathcal{T}, \quad \text{and} \quad s_0, s_1: S_2^{(n-1)}\mathcal{T} \to S_2^{(n)}\mathcal{T},
\]
derived by integrating the mentioned construction to $\mathcal{T}$. Next, for a functor \( F \) from exact categories to chain complexes, consider the associated functor \( Q'(F, -) \), which also maps exact categories to chain complexes. Using the iterated constructions above and the functor \( Q'(F, -) \), McCarthy defines the \( Q \)-construction, a functor from exact categories to chain complexes. It is expressed by the following formula:
\[
Q(F, \mathcal{E}) := \operatorname{Coker} \left[ \Sigma Q'(F, \mathcal{E}) \oplus \Sigma Q'(F, \mathcal{E}) \xrightarrow{\hat{s}_0 + \hat{s}_1} Q'(F, \mathcal{E}) \right].
\]

To extend McCarthy's $Q$-construction to $\infty$-categories, we introduce a multidirectional $S_{*}$-construction for exact $\infty$-categories. This construction is analogous to McCarthy's approach. Specifically, for $F \in \operatorname{Fun}_{\text{Cat}_{\infty}}(\operatorname{N}(\text{Ar}[2]^k), \mathcal{C})$, to ensure that $F$ is an element of $S_2^{(k)}(\mathcal{C})$, we impose the following conditions: for a given exact $\infty$-category $(\mathcal{C}, \mathcal{M}, \mathcal{E})$ and for each $1 \leq r \leq k$, the square
\begin{equation*}\begin{tikzcd}
	{{\overset{\overset{\text{\(r\)-th place}}{\downarrow}}{F(i_1j_1  , \dots , 01 , \dots, i_kj_k)}}} && {{\overset{\overset{\text{\(r\)-th place}}{\downarrow}}{F(i_1j_1  , \dots , 02 , \dots, i_kj_k)}}} \\
	& {} & {} \\
	{{*}} && {{\underset{\underset{\text{\(r\)-th place}}{\uparrow}}{F(i_1j_1  , \dots , 12 , \dots, i_kj_k)}}}
	\arrow[from=1-1, to=1-3]
	\arrow[from=1-1, to=3-1]
	\arrow[from=1-3, to=3-3]
	\arrow[from=3-1, to=3-3]
	\arrow["\lrcorner"{anchor=center, pos=0.125, rotate=180}, draw=none, from=3-3, to=2-2]
\end{tikzcd}\end{equation*}
must be a coCartesian square, where the top horizontal arrow $f$ is a \emph{cofibration} and the right vertical arrow $h$ is a \emph{fibration}(\ref{cons:S_n-construction}). By this construction, we prove the following theorem:
\begin{theorem*}
    [Corollary \ref{col:equiv-with-iteration}]
 The following hold for every non-negative integer \(n\):
\begin{itemize}
    \item There is the natural equivalence $S_2^{(n)}\CC \simeq \underbrace{S_2(S_2( \dots (S_2\CC) \dots ))}_{\text{n times}} $.
    \item  \( S_2^{(n)}\CC \) is an exact \(\infty\)-category, where a morphism \(\alpha: F \to G\) is classified as a cofibration (resp. fibration) if, for every object \( x \in \operatorname{N}({T_2}^n) \), the induced map \( \alpha(x): F(x) \to G(x) \) is a cofibration (resp. fibration) in the category \(\mathcal{C}\).

\end{itemize}
\end{theorem*}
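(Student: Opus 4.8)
The plan is to prove, by induction on $n$, a mild strengthening of the statement: for all $n\ge 0$ there is a natural equivalence $S_2^{(n)}\CC\simeq\underbrace{S_2(S_2(\cdots(S_2\CC)\cdots))}_{n}$, and under it the exact structure on $S_2^{(n)}\CC$ is the objectwise one over $\operatorname{N}(T_2^n)$ --- cofibrations, fibrations, and pushouts along cofibrations are all detected or computed objectwise in $\CC$. The engine of the induction is the exponential equivalence for functor $\infty$-categories together with the case $n=1$: for any exact $\infty$-category $\mathcal D$, the $\infty$-category $S_2\mathcal D$ is exact and its exact structure is the objectwise one over $\operatorname{N}(\operatorname{Ar}[2])$. (This is the basic exactness property of the $S_\bullet$-construction, which I take as known; cf.\ \cite{Barwick_2015}.) The base case $n=0$ is immediate: $\operatorname{N}(\operatorname{Ar}[2]^0)$ is a point, so $S_2^{(0)}\CC=\CC$, the empty iteration of $S_2$ is $\CC$, and the objectwise condition over a point is vacuous.

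For the inductive step, assume the strengthened statement for $n-1$. Singling out one of the $n$ coordinates, the decomposition $\operatorname{Ar}[2]^n\cong\operatorname{Ar}[2]\times\operatorname{Ar}[2]^{n-1}$ together with the exponential equivalence yields
\[
\operatorname{Fun}\bigl(\operatorname{N}(\operatorname{Ar}[2]^n),\CC\bigr)\;\simeq\;\operatorname{Fun}\bigl(\operatorname{N}(\operatorname{Ar}[2]),\operatorname{Fun}(\operatorname{N}(\operatorname{Ar}[2]^{n-1}),\CC)\bigr),
\]
under which a functor $F$ corresponds to a functor $\widetilde F$. I would then unwind Construction~\ref{cons:S_n-construction} to check that $F\in S_2^{(n)}\CC$ exactly when: \textbf{(a)} for every object $w$ of $\operatorname{Ar}[2]$ the functor $F(w,-)\colon\operatorname{N}(\operatorname{Ar}[2]^{n-1})\to\CC$ lies in $S_2^{(n-1)}\CC$ --- this repackages the diagonal-vanishing and coCartesian-square conditions in the other $n-1$ directions, and says precisely that $\widetilde F$ factors through the full subcategory $S_2^{(n-1)}\CC$; and \textbf{(b)} the conditions in the singled-out direction say that $\widetilde F$, now regarded as a diagram in $S_2^{(n-1)}\CC$, carries the diagonal objects of $\operatorname{Ar}[2]$ to the zero object and carries the distinguished square to a square that is coCartesian --- with cofibration top edge and fibration right edge --- objectwise over $\operatorname{N}(\operatorname{Ar}[2]^{n-1})$ in $\CC$. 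By the inductive hypothesis the exact structure on $S_2^{(n-1)}\CC$ is the objectwise one, so (b) is exactly the statement that $\widetilde F\in S_2\bigl(S_2^{(n-1)}\CC\bigr)$. Thus the displayed equivalence restricts to an equivalence $S_2^{(n)}\CC\simeq S_2\bigl(S_2^{(n-1)}\CC\bigr)$, manifestly natural in $\CC$; composing with the inductive equivalence for $n-1$ and using functoriality of $S_2$ gives the first bullet.

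It remains to identify the exact structure. Since $S_2^{(n-1)}\CC$ is exact, the case $n=1$ applies with $\mathcal D=S_2^{(n-1)}\CC$, giving that the exact structure on $S_2\bigl(S_2^{(n-1)}\CC\bigr)$ is the objectwise one over $\operatorname{N}(\operatorname{Ar}[2])$ with values in $S_2^{(n-1)}\CC$. Combining this with the inductive description of the exact structure on $S_2^{(n-1)}\CC$ (objectwise over $\operatorname{N}(T_2^{n-1})$ with values in $\CC$) and the identification $\operatorname{N}(T_2^n)\simeq\operatorname{N}(\operatorname{Ar}[2])\times\operatorname{N}(T_2^{n-1})$, one reads off that a morphism $\alpha\colon F\to G$ in $S_2^{(n)}\CC$ is a cofibration (resp.\ fibration) exactly when $\alpha(x)\colon F(x)\to G(x)$ is a cofibration (resp.\ fibration) in $\CC$ for every object $x$ of $\operatorname{N}(T_2^n)$, and likewise that pushouts along cofibrations in $S_2^{(n)}\CC$ are computed objectwise in $\CC$. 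Transporting this structure along the equivalence of the first bullet completes the induction and, in particular, proves the second bullet.

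The step I expect to be the main obstacle is (b): one must be sure that a square in the full subcategory $S_2^{(n-1)}\CC$ is coCartesian precisely when it is objectwise coCartesian in $\CC$ --- equivalently, that $S_2^{(n-1)}\CC$ is closed under the relevant pushouts inside the ambient functor $\infty$-category and that they are computed pointwise. This is exactly where the strengthened inductive hypothesis (that the exact structure on $S_2^{(n-1)}\CC$ is the objectwise one, not merely that it is exact) is used, and it is why the two assertions of the theorem have to be carried through the induction simultaneously and in strengthened form, rather than proved one after the other.
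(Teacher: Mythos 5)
Your proposal is correct and follows essentially the same route as the paper: the currying/uncurrying equivalence $S_2^{(n)}\CC\simeq S_2(S_2^{(n-1)}\CC)$ (the paper's Lemma~\ref{iteration-lemma}) combined with the single-step result that $S_2$ of an exact $\infty$-category is exact with the objectwise structure (Proposition~\ref{prop:exactness-S_2C}), iterated by induction. Your explicit insistence on carrying the \emph{objectwise} description of the exact structure through the induction --- so that coCartesian squares in $S_2^{(n-1)}\CC$ are detected objectwise in $\CC$ --- is exactly the point the paper leaves implicit in its terse verification that the functors $L$ and $R$ restrict to the claimed equivalence.
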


We then extend the face and degeneracy functors (as mentioned above) in the $\infty$-categorical sense, allowing us to define the extended $Q'$-construction for a given functor \( F: \text{Exact}_{\infty} \to \AAA \), where \( \text{Exact}_{\infty} \) is the $\infty$-category of exact $\infty$-categories (Notation \ref{not:exact∞}) and \( \AAA \) is a fixed stable $\infty$-category. We choose \( \AAA \) to be stable because the category of chain complexes is abelian, and the $\infty$-categorical analog of abelian categories is stable $\infty$-categories. Furthermore, the $\infty$-category of coherent connective chain complexes in stable $\infty$-categories is also stable, analogous to the fact that the category of chain complexes of chain complexes is again abelian. Next, following McCarthy’s approach in the ordinary case, we define the generalized the $Q$-construction for exact $\infty$-categories as follows:
\begin{equation*}
    Q(F;\CC) \in \text{Ch}_{\geq 0}(\AAA) :\text{Exact}_{\infty} \to \AAA \ \ \ \ ; \ \ \ \
    Q(F;\CC):= \operatorname{cofib}[\Sigma Q' \oplus \Sigma Q'(F;\CC) \xrightarrow{({\hat{s}}_0 , {\hat{s}}_1)} Q'(F;C)]\eqref{eq:Q-construction}
\end{equation*}
Note that, since $\text{Ch}_{\geq 0}(\mathcal{A})$ is stable, it follows that the cofiber of $(\hat{s}_0, \hat{s}_1)$ exists.

We conclude the paper with examples involving the stable $\infty$-category $\text{Sp}$ of spectra. Specifically, we define three different functors from $\text{Exact}_{\infty}$ to $\text{Sp}$ and apply our construction to these functors. Since \( \text{Sp} \) is a stable \( \infty \)-category and therefore an additive \( \infty \)-category, it admits direct sums, which is given by the wedge sum of spectra. Using this structure, we construct functors from \( \text{Exact}_\infty \) to \( \text{Sp} \) with the help of two important spectra: the Eilenberg-Mac Lane spectrum and the suspension spectrum. A more technical example arises from (Corollary \ref{col:equiv-with-iteration}) and (Proposition \ref{prop:mapping-space-being-abelian-group}), as well as the observation that the \( \infty \)-category of \emph{grouplike \( \mathbb{E}_\infty \) spaces} is an additive \( \infty \)-category.

% The authors believe that the algebraic \( K \)-theory of an exact \( \infty \)-category can be computed using an \( \infty \)-categorical version of McCarthy’s \( Q \)-construction. However, in this work, we focus primarily on introducing the \( \infty \)-categorical construction itself.

\subsection*{Organization of the material}
\label{sec:org}

\begin{itemize}
    \item In Section \ref{sec:preliminaries}, we begin by reviewing exact categories and chain complexes within the context of \( \infty \)-categories. Following this, we recall the \( E_{\infty} \)-structure of mapping spaces in additive \( \infty \)-categories.
\item  In Section \ref{sec:S_n-construction}, we begin by generalizing McCarthy's \( S_{\bullet} \)-construction to the context of exact \( \infty \)-categories (Construction \ref{cons:S_n-construction}). We then prove that for a given exact \( \infty \)-category \( \mathcal{C} \), the \( \infty \)-category \( S_2\mathcal{C} \) is also exact (Proposition \ref{prop:exactness-S_2C}). By iterating the \( S_{\bullet} \)-construction, we obtain a sequence of exact \( \infty \)-categories (Lemma \ref{iteration-lemma}). Finally, we define the face (Definition \ref{def:d_k(l)}) and degeneracy (Definition \ref{def:sk(l)}) functors for exact \( \infty \)-categories, which are crucial for the construction of the generalized \( Q \)-construction.
\item In Section \ref{sec:Mac-Lane-Q-C}, we establish the foundation of the \( Q' \)-construction \eqref{eq:Mac-Lane-Q'-functor} within the context of an exact \( \infty \)-category \( \mathcal{C} \) and a stable \( \infty \)-category \( \mathcal{A} \). We proceed by defining a shifted \( Q' \)-complex \eqref{eq:shift-Q'-functor}, constructing a chain map from the original complex to the shifted complex (Proposition \ref{prop:s-hat0}), followed by a second chain map from the original complex to the direct sum of the shifted complex with itself \eqref{eq:shift-Q'-functor-direct-sum}. This construction utilizes the additivity property of \( \mathcal{A} \) and the face and degeneracy functors introduced in the previous section. We then define the \( \infty \)-categorical version of Mac Lane’s \( Q \)-construction \eqref{eq:Q-construction}, taking advantage of the fact that cofibers of morphisms exist in stable \( \infty \)-categories. 

\item In Section \ref{sec:examples}, we review the key results regarding the category of spectra within the \( \infty \)-categorical framework. We then construct several functors and apply the \( Q \)-construction developed in the previous section. All the functors we examine map \( \text{Exact}_\infty \) to \( \text{Sp} \). We begin with the \emph{suspension spectrum} functor \eqref{eq:suspension-spectrum-functor}, a classical example that maps spaces to spectra. Next, we explore the \emph{Map} functor \eqref{eq:Map-functor}, which employs the \( \mathbb{E}_\infty \)-group structure of mapping spaces in additive \( \infty \)-categories. Finally, we conclude the section with the \emph{Eilenberg-Mac Lane spectrum} functor \eqref{eq:Eilenberg-Mac-Lane-functor}, which associates a spectrum to every abelian group.

\end{itemize}

\section{Preliminaries}
\label{sec:preliminaries}
We recall the definition of exact \( \infty \)-categories and exact functors and fix a notation. We then review the \( \infty \)-categorical concept of chain complexes, followed by a discussion of mapping spaces in additive \( \infty \)-categories. Finally, we recall the terminology of the \( \infty \)-category of spectra and state some related miscellaneous facts. We adopt the language of \( \infty \)-categories as developed in \cite{HigherAlgebraLurie} and \cite{lurie2008highertopostheory}.

\subsection{Exact $\ii$-Categories}
This subsection provides an overview of the definition of exact categories and exact functors within the framework of \( \infty \)-categories. For future reference, we adopt Barwick's construction of the \( \infty \)-category of exact \( \infty \)-categories.

\begin{definition}
  An \( \infty \)-category is called pointed if it has a zero object, that is, an object that is both initial and final. For a pointed \( \infty \)-category \( \mathcal{C} \), we denote the zero object by \( * \).

\end{definition}

The existence of a zero object in an \( \infty \)-category \( \mathcal{C} \) implies that for every object \( x \in \mathcal{C} \), the mapping spaces \( \text{Map}_{\mathcal{C}}(x, *) \) and \( \text{Map}_{\mathcal{C}}(*, x) \) are contractible. Consequently, for any two objects \( x \) and \( y \), there is a map
\begin{equation*}
    *_{xy} : x \to * \to y ,
\end{equation*}
which is unique up to a contractible space of choices. For convenience, we also denote this map by \( * \).

\begin{definition}\label{def:additive inf cat}
    A pointed \(\infty\)-category \(\mathcal{C}\) is called \emph{pre-additive} if it admits both finite products and finite coproducts. For any \(x, y \in \mathcal{C}\), the natural map
    
\begin{equation*}
\begin{pmatrix}
  \mathrm{Id}_x & * \\
  * & \mathrm{Id}_y
\end{pmatrix}: x \sqcup y \longrightarrow x \times y
\end{equation*}
is an equivalence in \(\mathcal{C}\). We denote by \(x \oplus y\) the \emph{unified product and coproduct} of \(x\) and \(y\). The pre-additive $\ii$-category \(\mathcal{C}\) is called \emph{additive} if the shear map

\begin{equation*}
s := \begin{pmatrix}
  \mathrm{Id}_x & \mathrm{Id}_x \\
  * & \mathrm{Id}_x
\end{pmatrix}: x \oplus x \longrightarrow x \oplus x
\end{equation*}
is an equivalence in \(\mathcal{C}\).
\end{definition}

\begin{remark}
    Equivalently, a pre-additive \( \infty \)-category \( \mathcal{C} \) is called additive if its homotopy category \( h\mathcal{C} \) is an ordinary additive category. This implies that \( \mathcal{C} \) is additive if, for every pair of objects \( (x, y) \), the homotopy category \( \text{hMap}_{\mathcal{C}}(x, y) \) is equipped with the structure of an abelian group.
\end{remark}

\begin{definition}
\label{def:exact infinity categories} Let \( \mathcal{C} \) be an additive \( \infty \)-category. A pair \( (\mathcal{M}, \mathcal{E}) \) of subcategories of \( \mathcal{C} \), where the morphisms in $\M$ (resp. $\E$) are called cofibration (resp. fibration),  is called an \emph{exact structure} on \( \mathcal{C} \), if the following conditions hold: (We denote \(g \in \text{Map}_\M(x,y) \) by % https://tikzcd.yichuanshen.de/#N4Igdg9gJgpgziAXAbVABwnAlgFyxMJZABgBpiBdUkANwEMAbAVxiRAA8QBfU9TXfIRQBGclVqMWbAJ7dxMKAHN4RUADMAThAC2SMiBwQkog3SwM2OMxeoM6AIxgMACvzwE2DGGpwhq9ZlZEEEU5LiA
\begin{tikzcd}
x \arrow[r, "g", tail] & y
\end{tikzcd}
, and \(f \in \text{Map}_\E(y,z)\) by % https://tikzcd.yichuanshen.de/#N4Igdg9gJgpgziAXAbVABwnAlgFyxMJZABgBpiBdUkANwEMAbAVxiRAE8QBfU9TXfIRQBGclVqMWbAF7dxMKAHN4RUADMAThAC2SMiBwQkog3SwM2kMK2oM6AIxgMACvzwE2DGGpwhq9ZlZEEDU-EAALGDooNhwAdwhI6IQuCi4gA
\begin{tikzcd}
y \arrow[r, "f", two heads] & z
\end{tikzcd})
\begin{itemize}
    \item[\text{(E1)}] The subcategories \( \mathcal{M} \) and \( \mathcal{E} \) contain all equivalences of \( \mathcal{C} \). In particular, \( \mathcal{M} \) and \( \mathcal{E} \) contain all objects of \( \mathcal{C} \).
    
    \item[\text{(E2)}] 
    \begin{itemize}
        \item[(i)] Morphisms in \( \mathcal{M} \) admit pushouts along arbitrary morphisms in \( \mathcal{C} \) and \( \mathcal{M} \) is stable under pushouts.
        \item[(ii)] Morphisms in \( \mathcal{E} \) admit pullbacks along arbitrary morphisms in \( \mathcal{C} \) and \( \mathcal{E} \) is stable under pullbacks.
    \end{itemize}
    
    \item[\text{(E3)}] For any square of the form
    \begin{equation*}
    \begin{tikzcd}
        x \ar[r, "g"] \ar[d] & y \ar[d, "f"] \\
        * \ar[r] & z
    \end{tikzcd}
    \end{equation*}
    in \( \mathcal{C} \), we have
    \begin{itemize}
        \item[(i)] If \( g \in \mathcal{M}_1 \) and the square is coCartesian, then \( f \in \mathcal{E}_1 \) and the square is Cartesian.
        \item[(ii)] If \( f \in \mathcal{E}_1 \) and the square is Cartesian, then \( g \in \mathcal{M}_1 \) and the square is coCartesian.
        \end{itemize}
        \end{itemize}
        A triple \( (\mathcal{C},\mathcal{M},\mathcal{E}) \) satisfying these conditions is called an \emph{exact \( \infty \)-category}. We often leave the choice of subcategories implicit, referring to \( \mathcal{C} \) as an exact \( \infty \)-category. 
\end{definition}

\begin{definition}

        The square
        \begin{equation*}
            % https://tikzcd.yichuanshen.de/#N4Igdg9gJgpgziAXAbVABwnAlgFyxMJZABgBpiBdUkANwEMAbAVxiRAA8QBfU9TXfIRQBGclVqMWbAJ7deIDNjwEio4ePrNWiEAC85fJYKJl11TVJ0AqbuJhQA5vCKgAZgCcIAWyRkQOCCRRfzosBjYcUPDqBjoAIxgGAAV+ZSEQBhhXHBBzSW0QB1yQAAsYOigkMCYGBh43Tx9EYICkACY8rTZXYrKKiIB3CD6oBHqQD29falbEAGZqEcHh8tGDCcakBf9AxA6QsIio2y4gA
\begin{tikzcd}
x \arrow[r, "g"] \arrow[d, two heads] & y \arrow[d, "f"] \\
* \arrow[r, tail]                           & z                          
\end{tikzcd}
        \end{equation*}
        is called a \emph{short exact sequence} in \(\CC\), if one of the following equivalent conditions holds:

        \begin{itemize}
            \item[(i)]  The square is coCartesian, and the map $g$ is a cofibration.
            \item[(ii)] The square is Cartesian, and the map $f$ is a fibration.
        \end{itemize}
   \end{definition}
\begin{definition}
   Let $\CC$ and $\mathcal{D}$ be exact $\ii$-categories. A functor $F : \CC \to \mathcal{D}$ is called \emph{exact} if it preserves:
   \begin{itemize}
       \item[(i)] the zero object;
       \item[(ii)] Cofibrations and Fibrations;
       \item[(iii)] The pushout of cofibrations and the pullback of fibrations.
   \end{itemize}
We denote by $\text{Ex-Fun}(\CC, \mathcal{D})$ the full subcategory $\text{Fun}(\CC,\mathcal{D})$ spanned by exact functors from $\CC$ to $\mathcal{D}$.
\end{definition}

\begin{notation}\label{not:exact∞}
   Denote by \( \text{Exact}^{\Delta}_\infty \) the following simplicial category. The objects of \( \text{Exact}^{\Delta}_\infty \) are small exact \( \infty \)-categories. For any two exact \( \infty \)-categories \( \mathcal{C} \) and \( \mathcal{D} \), let \( \text{Exact}^{\Delta}_\infty(\mathcal{C}, \mathcal{D}) \) denote the maximal Kan complex \( \iota \, \text{Ex-Fun}(\mathcal{C}, \mathcal{D}) \) contained in \( \text{Ex-Fun}(\mathcal{C}, \mathcal{D}) \). We define \( \text{Exact}_\infty \) to be the simplicial nerve \cite[Definition 1.1.5.5]{lurie2008highertopostheory} of \( \text{Exact}^{\Delta}_\infty \). In fact, \( \text{Exact}_{\infty} \) is the \( \infty \)-category of exact \( \infty \)-categories.  
\end{notation}

\subsection{Coherent Chain Complexes}

This subsection reviews the basics of \emph{coherent chain complexes}. For additional information, we refer
the reader to \cite{Walde_2022}.

\begin{definition}
    Let \(\CC\) be an ordinary category and \(S\) be a set of morphisms in $\CC$. We say $S$ is an \emph{ideal} of $\CC$, if $\CC \circ S \circ \CC \ \subseteq S $,  i.e., for every triple of composable morphisms $f$, $g$ and $h$, where $f \in S$ and $g,h \in \CC $, the composition $g \circ f \circ h$ is aslo a morphism in $S$.
\end{definition}

\begin{construction} \cite[Construction 2.3.1]{Walde_2022}
    We define the \emph{quotient} of \(\CC\) by the ideal \(S\) to be the pointed category \(\frac{\mathcal{C}}{S}\) as follows:
\begin{itemize}
    \item The objects of \(\frac{\mathcal{C}}{S}\) are the objects \(x \in \mathcal{C}\) along with an additional zero object \(*\).
    \item The morphisms of \(\frac{\mathcal{C}}{S}\) are determined by setting
    \begin{equation*}
    \frac{\mathcal{C}}{S}(x, y) := \frac{\mathcal{C}(x, y)}{S}  \dot{\cup}  \{ x \to * \to y \}
    \end{equation*}
    for \(x, y \in \mathcal{C}\), with composition induced by that in \(\mathcal{C}\).
\end{itemize}

The category \( \frac{\mathcal{C}}{S}\) comes equipped with the canonical functor \(\mathcal{C} \to \frac{\mathcal{C}}{S}\), which is the identity on objects and sends precisely the morphisms in \(S\) to zero.
\end{construction}

\begin{definition}
   \cite[Definition 2.3.5]{Walde_2022} We denote by \(\mathrm{Ch} := \frac{\mathbb{Z}}{(\rightarrow\rightarrow)}\) the quotient of the poset \(\mathbb{Z}\) by the ideal \((\rightarrow\rightarrow)\) of all maps \(\ \underline{n} \to \underline{m} \) with \(m - n \geq 2\). A \emph{coherent chain complex} in a pointed \(\infty\)-category \(\mathcal{P}\) is a pointed functor \(\operatorname{N}(\mathrm{Ch}^{\text{op}}) \to \mathcal{P}\); we denote by \(\mathrm{Ch}(\mathcal{P}) := \mathrm{Fun}^*(\operatorname{N}(\mathrm{Ch}^{\text{op}}), \mathcal{P})\) the \(\infty\)-category of coherent chain complexes in \(\mathcal{P}\). Similarly, we set \(\mathrm{Ch}_{\geq 0} := \frac{\mathbb{N}}{(\rightarrow\rightarrow)}\) and define the \(\infty\)-categories of \emph{connective chain complexes} in \(\mathcal{P}\) as \(\mathrm{Ch}_{\geq 0}(\mathcal{P}) := \mathrm{Fun}^*(\operatorname{N}({\mathrm{Ch}_{\geq 0}}^{\text{op}}), \mathcal{P})\). 
\end{definition}

For example a connective coherent chain complex \(F \in \mathrm{Ch}_{\geq 0}(\mathcal{P})\) consists of the datum of
\begin{itemize}
    \item A family \(\{{p_i}\}_{i \in \mathbb{N}}\) of objects in \(\mathcal{P}\),

    \item Functors \(\alpha_i : p_i \rightarrow p_{i-1}\),

    \item Equivalences \(\alpha_i \circ \alpha_{i+1}\ \simeq *\),
\end{itemize}
That
that can be depicted as follows:
\begin{equation*}
     p_0 \xleftarrow{\alpha_1} p_1 \leftarrow \dots p_{i-1} \xleftarrow{\alpha_i} p_i \xleftarrow{\alpha_{i+1}} p_{i+1} \leftarrow \dots  . 
\end{equation*}
Note that \(*\) represresent the unique zero morphism \( p_{i-1}\leftarrow * \leftarrow p_{i+1}\) in \( \mathcal{P}\).

\begin{definition}
    A \emph{1-simplex} (or equivalently, a \emph{natural transformation}) in the $\ii$-category $\mathrm{Fun}^*(\mathrm{N}({\mathrm{Ch}_{\geq 0}}^{\text{op}}), \mathcal{P})$ is called a \emph{chain map}. If $\varphi: F \to F'$ is a chain map in the $\ii$-category $\mathrm{Ch}_{\geq 0}(\mathcal{P})$, then for each $n \geq 0$, the following homotopy condition holds:
\begin{equation*}
\varphi(\underline{n}) \circ \delta_{n+1} \simeq \delta_{n+1} \circ \varphi(\underline{n+1})
\end{equation*}
In other words, the following square commutes up to homotopy:

    \begin{equation*}
        % https://tikzcd.yichuanshen.de/#N4Igdg9gJgpgziAXAbVABwnAlgFyxMJZABgBpiBdUkANwEMAbAVxiRADEAKAHW6bFgAnBljAxgYAL4BKEJNLpMufIRQBGclVqMWbLr35CRYiQGo1MuQpAZseAkTJqt9Zq0QcA5Dz4CYw0XEpWXlFOxUiDWdqV10Pdm8DPwCTMHNLSS0YKABzeCJQADNBCABbJA0QHAgkMm03Nl5YBhw6AH0zCxBqBjoAIxgGAAUle1UQQSwcgAscKyKS8sQ66qQAJh7+wZHwhw9JmbmYnXcQXnpBNGmsH0N-YyCM62KypABmalXEDfq4s+5mq02oRQiAXktKl8Pr9Tuc6Jdrrdkg9OhkKJIgA
\begin{tikzcd}
F(\underline{n}) \arrow[d, "\varphi(\underline{n})"'] & F(\underline{n+1}) \arrow[l, "\delta_{n+1}"'] \arrow[d, "\varphi(\underline{n+1})"] \\
F'(\underline{n})                                     & F'(\underline{n+1}) \arrow[l, "\delta_{n+1}"]                                          
\end{tikzcd}
    \end{equation*}
\end{definition}

\subsection{Mapping Spaces in Additive \(\infty\)-Categories}
This subsection provides a summary of key definitions, propositions, and corollaries related to \(E_{\infty}\)-groups and additive 
\(\infty\)-categories. For further details, we refer the reader to \cite[Chapter II, Lecture 12]{FabianLectNotes} and \cite{HigherAlgebraLurie}.

\begin{definition}
    Let \(\text{Fin}_*\) be the category of finite pointed sets, and \(\CC\) be an \(\infty\)-category with finite products. A functor \(M : \operatorname{N}(\text{Fin}_*) \to \CC \) is called an \emph{\(\mathbb{E}_{\infty}\)-monoid} (or, simply a \emph{commutative monoid}) if the morphism 
\begin{equation*}
     M_n=M(\langle n \rangle) \overset{\simeq}{\rightarrow}  \prod_{i=1}^{n} M_1 \simeq \underbrace{M_1 \times_{*} \dots \times_{*} M_1}_{\text{n-times}}
\end{equation*}
    induced by \(n\) inert morphisms \(e_i : \langle n \rangle \to \langle 1 \rangle \) is an equivalence in \(\CC\). Denote the \(\infty\)-category of commutative monoids by \( \text{Mon}_{\mathbb{E}_{\infty}}(\CC)\). We refer to the objects of \(\mathrm{Mon}_{\mathbb{E}_{\infty}}(\mathcal{S})\) as \(\mathbb{E}_{\infty}\)-spaces.
(see \cite[2.0.0.2 and 2.1.1.8]{HigherAlgebraLurie} for details)
\end{definition}

\begin{remark}\label{rem:multiplication-map-of-monoids}
    Given an \(\mathbb{E}_{\infty}\)-monoid \(M\) in \(\CC\), the unique active morphism \( f: \langle 2 \rangle\ \to \langle 1 \rangle \) induces the the following coherently associative and commutative morphism
    \begin{equation*}
        m:  M_2 \simeq M_1 \times M_1  \to M_1
    \end{equation*}
    in \(\CC\), which is unique up to a contractible choice of spaces. we refer to this morphism as the \emph{multiplication map}. (see \cite[2.1.2.1 and 2.4.2.3]{HigherAlgebraLurie} for more details)
\end{remark}
\begin{definition}
    Let \(\CC\) be an $\infty$-category with products, and  $M \in \text{Mon}_{\mathbb{E}_{\infty}}(\CC)$. We say that $M$ 
 is an \emph{$\mathbb{E}_{\infty}$-group} (or, simply a commutative group) if the map
    \begin{equation*}
        (pr_1 , m ) : M_1 \times M_1 \to M_1 \times M_1
    \end{equation*}
    is an equivalence in $\CC$. Note that $pr_1: M_1 \times M_1 \to M_1$  is the projection onto the first coordinate, and
$m$ is the multiplication map, as defined in Remark \ref{rem:multiplication-map-of-monoids}. We denote the $\infty$-category of such commutative groups by $\text{Grp}_{\mathbb{E}_{\infty}}(\CC)$. We refer to the objects of \(\mathrm{Grp}_{\mathbb{E}_{\infty}}(\mathcal{S})\) as \emph{group-like \(\mathbb{E}_{\infty}\)-spaces}.

\end{definition}

\begin{proposition}
    \cite{FabianLectNotes} \label{prop:mapping-space-being-abelian-group}
    If \( \mathcal{C} \) is an additive \( \infty \)-category, then there is a canonical lift
\begin{equation*}
\text{Map}_\mathcal{C} : \mathcal{C}^{\text{op}} \times \mathcal{C} \to \text{Grp}_{\mathbb{E}_{\infty}}(\mathcal{S}) . 
\end{equation*}
Here, the $\ii$-category of spaces $\mathcal{S}$ is in fact the coherent nerve \cite[Definition 7.4.1]{Bergner_2018} $\Tilde{\operatorname{N}}(\text{Kan})$ of Kan complexes.
\end{proposition}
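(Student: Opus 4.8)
The plan is to produce the commutative monoid structure on mapping spaces directly from the biproduct of $\mathcal{C}$, and to verify coherence and the grouplike condition. Recall that $\mathrm{Mon}_{\mathbb{E}_\infty}(\mathcal{S})$ is the full subcategory of $\mathrm{Fun}(\operatorname{N}(\mathrm{Fin}_*),\mathcal{S})$ spanned by the product-preserving functors, and that $\mathrm{Grp}_{\mathbb{E}_\infty}(\mathcal{S})$ is the further full subcategory cut out by the shear condition, while the two forgetful functors down to $\mathcal{S}$ are evaluation at $\langle 1\rangle$. Thus, to build the required lift it suffices to construct a functor $M\colon \operatorname{N}(\mathrm{Fin}_*)\times\mathcal{C}^{\mathrm{op}}\times\mathcal{C}\to\mathcal{S}$ such that: (a) $M(\langle 1\rangle,-,-)\simeq \text{Map}_\mathcal{C}$; (b) for each fixed $(x,y)$ the functor $M(-,x,y)$ is product-preserving; and (c) each $M(-,x,y)$ is grouplike. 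The desired functor $\mathcal{C}^{\mathrm{op}}\times\mathcal{C}\to\mathrm{Grp}_{\mathbb{E}_\infty}(\mathcal{S})$ is then the one adjoint to $M$.

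First I would construct $M$. Since $\mathcal{C}$ is pre-additive, finite coproducts and finite products coincide, so morphisms between finite biproducts are encoded by matrices of morphisms of $\mathcal{C}$; this assembles into a functor $\operatorname{N}(\mathrm{Fin}_*)\to\mathrm{Fun}(\mathcal{C},\mathcal{C})$ carrying $\langle n\rangle$ to the endofunctor $y\mapsto y^{\oplus n}$ and a pointed map $\phi\colon\langle n\rangle\to\langle m\rangle$ to the matrix whose $(j,i)$ entry is $\mathrm{Id}_y$ when $\phi(i)=j$ and $*$ otherwise. This is precisely the statement that every object of a pre-additive (semiadditive) $\infty$-category is canonically an $\mathbb{E}_\infty$-monoid object, for which I would cite \cite{HigherAlgebraLurie}. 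I then set $M(\langle n\rangle,x,y):=\text{Map}_\mathcal{C}(x,y^{\oplus n})$, covariant in the $\langle n\rangle$- and $y$-slots and contravariant in $x$; property (a) is immediate, as is the fact that $M(\langle 0\rangle,x,y)\simeq\text{Map}_\mathcal{C}(x,*)$ is contractible.

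Next I would check (b): for fixed $x$ the corepresentable functor $\text{Map}_\mathcal{C}(x,-)\colon\mathcal{C}\to\mathcal{S}$ preserves finite products, and since biproducts are products this yields $M(\langle n\rangle,x,y)\simeq\prod_{i=1}^{n}\text{Map}_\mathcal{C}(x,y)$, the equivalence being the one induced by the $n$ inert maps $e_i\colon\langle n\rangle\to\langle 1\rangle$; hence $M(-,x,y)$ is an $\mathbb{E}_\infty$-monoid. For (c), the multiplication $m\colon M_1\times M_1\to M_1$ is induced by the active map $\langle 2\rangle\to\langle 1\rangle$, that is, by the fold map $\nabla\colon y\oplus y\to y$, and postcomposition with the shear map $s$ of Definition \ref{def:additive inf cat} induces on $\text{Map}_\mathcal{C}(x,y\oplus y)\simeq M_1\times M_1$ the map sending $(f,g)$ to $(f+g,g)$, which agrees with $(pr_1,m)$ up to a permutation of the two factors. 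Since $\mathcal{C}$ is additive, $s$ is an equivalence, hence so is $(pr_1,m)$, and therefore $M(-,x,y)$ is grouplike. This completes the construction and shows that the composite of the resulting functor with the forgetful functor $\mathrm{Grp}_{\mathbb{E}_\infty}(\mathcal{S})\to\mathcal{S}$ is $\text{Map}_\mathcal{C}$.

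The one genuinely non-formal point — and the step I expect to be the main obstacle — is the coherent construction of the functor $\langle n\rangle\mapsto y^{\oplus n}$ out of $\operatorname{N}(\mathrm{Fin}_*)$, i.e.\ upgrading the matrix calculus for maps between biproducts from a compatible family of operations on homotopy categories to an honest functor of $\infty$-categories, naturally in $y$. This is exactly the semiadditivity input, and I would invoke it from \cite{HigherAlgebraLurie} (or the account in \cite{FabianLectNotes}) rather than reproving it. Granting that, the remaining verifications — that corepresentables preserve finite products, and the shear-map identity identifying $s_*$ with the group-completion shear on $M_1$ — are formal, and the assembled $M$ provides the claimed canonical lift of $\text{Map}_\mathcal{C}$.
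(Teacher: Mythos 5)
Your proposal is correct: the paper itself offers no argument beyond citing \cite[Proposition II.16, Corollary II.17]{FabianLectNotes}, and what you have written is precisely the standard proof given there — every object of a pre-additive $\infty$-category is canonically an $\mathbb{E}_\infty$-monoid via the biproduct, corepresentables preserve products so $\mathrm{Map}_{\mathcal{C}}(x,-)$ transports this to a Segal-type monoid in $\mathcal{S}$, and the shear map being an equivalence identifies (up to a swap of factors) with the grouplike condition $(pr_1,m)$. You also correctly isolate the one non-formal input, the coherent functor $\langle n\rangle\mapsto y^{\oplus n}$ out of $\operatorname{N}(\mathrm{Fin}_*)$, and appropriately defer it to the semiadditivity results in the cited literature.
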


\begin{proof}
    see \cite[Proposition II.16, Corollary II.17]{FabianLectNotes} .
\end{proof}

An immediate consequence of the proposition \ref{prop:mapping-space-being-abelian-group} is that if \( \mathcal{A} \) is a stable \( \infty \)-category, then for any pair of objects \( (x, y) \), the mapping space \( \text{Map}_{\mathcal{A}}(x, y) \) is naturally equipped with the structure of an \( \mathbb{E}_\infty \)-group. The unique map \( * \to \text{Map}_{\mathcal{A}}(x, y) \) determines the distinguished object \( x \to * \to y \), which represents the unique zero morphism between \( x \) and \( y \) in \( \mathcal{A} \).

Moreover, for \( f, g \in \text{Map}_{\mathcal{A}}(x, y) \), the multiplication map \( m(f, g) \) is given by the following composition:

\begin{equation*}
  x \xrightarrow{\bigtriangleup} x \oplus x \xrightarrow{\begin{pmatrix} f & * \\ * & g \end{pmatrix}} y \oplus y \xrightarrow{\bigtriangledown} y ,  
\end{equation*}
where \( \bigtriangleup \) (resp. \( \bigtriangledown \)) denotes the diagonal (resp. codiagonal) morphisms. We denote this composition by \( f + g \).

Furthermore, the additive inverse of \( f \) is given by \( f \circ (-\text{Id}_x) \), where \( -\text{Id}_x \in \text{Map}_{\mathcal{A}}(x, x) \) is the additive inverse of \( \text{Id}_x \) in the homotopy category \( \text{hMap}_{\mathcal{A}}(x, x) \). Observe that with this definition, we have the equivalence \( f + (-f) \simeq * \), where \( * \) denotes the unique zero morphism in \( \text{Map}_{\mathcal{A}}(x, y) \). We denote the morphism $f+(-g)$ by $f-g$. 

\begin{remark} 
  Let \( (x, y) \) be a pair of objects in \( \mathcal{A} \). For morphisms \( f, g \in \text{Map}_{\mathcal{A}}(x, y)_0 \), consider the following functor:
\begin{equation*}
+ h : \text{Map}_{\mathcal{A}}(x, y) \to \text{Map}_{\mathcal{A}}(x, y), \quad f \mapsto f + h,
\end{equation*}
where \( h \) is an arbitrary object in \( \text{Map}_{\mathcal{A}}(x, y) \). Then, the equivalence \( f \simeq g \) in \( \text{Map}_{\mathcal{A}}(x, y) \) implies that \( f + h \) is homotopic to \( g + h \) in \( \mathcal{A} \). Since \( f \) and \( g \) are equivalent as objects of the \( \infty \)-category \( \text{Map}_{\mathcal{A}}(x, y) \), and by the functoriality of the operation \( +h \), it follows that the morphisms \( f + h \) and \( g + h \) are equivalent in \( \text{Map}_{\mathcal{A}}(x, y) \). This implies that they are homotopic in the \( \infty \)-category \( \mathcal{A} \). In particular, by taking \( h := -g \), we obtain the following homotopy in \( \mathcal{A} \) (or equivalently, the following equivalence in \( \text{Map}_{\mathcal{A}}(x, y) \)):
\begin{equation*}
f \simeq g \Rightarrow f - g := f + (-g) \simeq g + (-g) \simeq *.
\end{equation*}
This can also be seen as a conclusion of \cite[{}34.13]{Joyal}
\end{remark} 

\begin{lemma} \label{lem:additivity-composition}
   For the same maps \( f \) and \( g \) as above, and for any \( h \in \text{Map}_{\mathcal{A}}(y, z) \), where \( z \in \mathcal{A}_0 \) is arbitrary, the followings are equivalent:
    \begin{equation*}
    h \circ (f+g) \simeq h \circ f + h \circ g
    \end{equation*}
\end{lemma}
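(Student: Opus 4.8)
The plan is to unwind both sides using the explicit description of the sum recalled just before the statement, and then to reduce the claim to two compatibility facts about the biproduct in an additive $\infty$-category: that the codiagonal is natural for post-composition, and that ``matrix'' morphisms compose in the evident way. For an object $w$ write $\bigtriangleup_w : w \to w \oplus w$ and $\bigtriangledown_w : w \oplus w \to w$ for the diagonal and codiagonal. By the definition of the sum,
\[
f + g \simeq \bigtriangledown_y \circ \begin{pmatrix} f & * \\ * & g \end{pmatrix} \circ \bigtriangleup_x, \qquad
h\circ f + h \circ g \simeq \bigtriangledown_z \circ \begin{pmatrix} h\circ f & * \\ * & h \circ g \end{pmatrix} \circ \bigtriangleup_x .
\]
Hence $h \circ (f+g) \simeq h \circ \bigtriangledown_y \circ \begin{pmatrix} f & * \\ * & g \end{pmatrix} \circ \bigtriangleup_x$, and it suffices to produce an equivalence of morphisms $x \oplus x \to z$,
\[
h \circ \bigtriangledown_y \circ \begin{pmatrix} f & * \\ * & g \end{pmatrix} \;\simeq\; \bigtriangledown_z \circ \begin{pmatrix} h\circ f & * \\ * & h \circ g \end{pmatrix} .
\]

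I would obtain this from two auxiliary equivalences. First, since $\mathcal{A}$ is additive, $z \oplus z$ is simultaneously a coproduct and a product of $z$ with itself; letting $h \oplus h : y \oplus y \to z \oplus z$ denote the functoriality of the biproduct, the universal property of the coproduct $y \oplus y$ yields the naturality equivalence $h \circ \bigtriangledown_y \simeq \bigtriangledown_z \circ (h \oplus h)$. Second, pinning down maps into $z \oplus z$ by their two product projections, and using that the zero morphism $*$ absorbs composition, one gets $(h \oplus h) \circ \begin{pmatrix} f & * \\ * & g \end{pmatrix} \simeq \begin{pmatrix} h\circ f & * \\ * & h\circ g \end{pmatrix}$ as morphisms $x \oplus x \to z \oplus z$. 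Chaining these two equivalences gives the displayed identity, and precomposing with $\bigtriangleup_x$ completes the argument; everything takes place in the coherently associative and commutative setting of Remark \ref{rem:multiplication-map-of-monoids}, so no extra bookkeeping about the order of the summands is required.

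The main obstacle is the coherence underlying this ``matrix calculus'': in an $\infty$-category one cannot literally manipulate the entries of a matrix, so both auxiliary equivalences must be extracted from universal properties — maps out of $x \oplus x$ via its coproduct structure, maps into $z \oplus z$ via its product structure — rather than by symbol-pushing, and one must check that the coherence data assembled this way are compatible with the contractible choices made when $f+g$ was defined. Once $\begin{pmatrix} f & * \\ * & g \end{pmatrix}$, $h \oplus h$, $\bigtriangleup$ and $\bigtriangledown$ are all described through these universal properties, the required homotopies are essentially forced, and the remaining verification is routine.
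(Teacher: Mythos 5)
Your proof is correct, but it takes a genuinely different route from the paper's. The paper's argument is a two-line reduction to the homotopy category: since \( \mathcal{A} \) is additive, \( \mathrm{h}\mathcal{A} \) is an ordinary additive category in which composition is bilinear, so \( h \circ (f+g) \) and \( h\circ f + h \circ g \) are equal in \( \mathrm{h}\mathrm{Map}_{\mathcal{A}}(x,z) \) and hence homotopic in \( \mathcal{A} \) --- the point being that the asserted equivalence is itself only a statement about homotopy classes, so no higher coherence needs to be produced. You instead work directly in \( \mathcal{A} \): you unwind \( f+g \) as \( \bigtriangledown_y \circ \left(\begin{smallmatrix} f & * \\ * & g \end{smallmatrix}\right) \circ \bigtriangleup_x \) and build the equivalence from the naturality of the codiagonal for post-composition and from the fact that maps \( x \oplus x \to z \oplus z \) are pinned down by their matrix entries via the coproduct/product universal properties. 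Both arguments are sound. What your version buys is self-containedness and an explicit witness for the homotopy, constructed entirely from universal properties (and it makes clear that only additivity, not stability, is used); what the paper's version buys is brevity, at the cost of implicitly relying on the fact that the \( \infty \)-categorical sum descends to the additive structure on \( \mathrm{h}\mathcal{A} \) --- a fact your computation essentially reproves.
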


\begin{proof}
   Since the \( \infty \)-category \( \mathcal{A} \) is stable, it is, by definition, also an additive \( \infty \)-category. By the additivity of \( \mathcal{A} \), we have the following in the homotopy category \( \text{h}\mathcal{A} \):
\begin{equation*}
    h \circ (f + g) = h \circ f + h \circ g.
\end{equation*}

This shows that \( h \circ (f + g) \) and \( h \circ f + h \circ g \) represent the same elements in the homotopy category $\text{h}\mathcal{A}$. Therefore, they are homotopic in $\mathcal{A}$, or equivalently \( h \circ (f + g) \) is equivalent to \( h \circ f + h \circ g \) as the objects of the $\ii$-category $\text{Map}_{\mathcal{A}}(x,z)$.

\end{proof}

\section{$\mathbf{S_n}$-Contruction for Exact \(\infty\)-Categories} \label{sec:S_n-construction}
In this section, we extend the formalism of Mac Lane's \( S \)-construction for ordinary exact categories to the context of exact \( \infty \)-categories. We then analyze the exactness of some certain \( \infty \)-categories, followed by the introduction of some specific exact functors between them.
 Our approach follows the framework outlined in \cite{Main-McCarthy}.
\subsection{${S_n}$-Construction}
Let \( \Delta \) be the category of finite nonempty order-preserving maps, where the objects of \( \Delta \) denote by \( [n] = \{0, 1, \ldots, n\} \), \( n \geq 0 \). We define  \( T_n = \text{Fun}([1],[n]) \) to be the ordinary arrow category formed by ordered pairs \( (0 \leq m \leq j \leq k) \), with \( (m, k ) \leq (p, q) \) if and only if \( m \leq p \) and \( k \leq q \) denote the arrow \( (m , k)\) by \( mk \).

For \( 0 \leq a \leq b \leq n \), we consider the interval \( [a, b] \subset [n] \) as a poset and therefore as a category. For each \( a \in [n] \), we have the embeddings
\begin{equation*}
h_a: [a, n] \hookrightarrow T_n, \quad j \mapsto (a, j) \quad ; \quad v_a: [0, a] \hookrightarrow T_n, \quad i \mapsto (i, a), 
\end{equation*}
We call the horizontal and vertical embeddings corresponding to \( a \).

\begin{construction}\label{cons:S_n-construction}
    Let \( (\mathcal{C}, \mathcal{M}, \mathcal{E}) \) be an exact \( \infty \)-category. We define 
\begin{equation*}
   \text{Ex}_{\ii}(\operatorname{N}({T_n} ^ k) , \CC) \subset \text{Map}_{\text{Cat}_{\ii}}(N({T_n}^k) , \mathcal{C})
\end{equation*}
to be the simplicial subset given by those simplices whose vertices are \( {T_n}^k \)-diagrams \( F \) satisfying the following conditions:
\begin{enumerate}
    \item[(i)] \( F(i_1j_1 , i_2j_2 , \dots , i_kj_k)\) is a zero object in \(\mathcal{C} \), if \( i_l = j_l\) for some \( 1 \leq l \leq k\) .
    \item[(ii)] For all integers \( 1 \leq s \leq k\) and \( 0\leq a \leq n\), the composition \(F\circ (h_a)_s\), where \((h_a)_s\) is the \emph{ \(s\)-th horizontal map}
    \begin{equation*}
         (h_a)_s := Id^{k-s} \circ h_a \circ Id^{s-1} : \underbrace{T_n \times \dots \times \overset{\overset{\text{\(s\)-th place}}{\downarrow}}{[a,n]} \times \dots \times T_n}_{\text{k times}} \rightarrow \underbrace{T_n \times \dots \times T_n}_{\text{k times}}   
    \end{equation*}
    take values in \(\mathcal{M}\).
    \item[(iii)] For all integers \( 1 \leq t \leq k\) and \( 0\leq a \leq n\), the composition \(F\circ (v_a)_t\), where \((v_a)_t\) is the \emph{ \(t\)-th vertical map}
    \begin{equation*}
         (v_a)_t := Id^{k-t} \circ v_a \circ Id^{t-1} : \underbrace{T_n \times \dots \times \overset{\overset{\text{\(t\)-th place}}{\downarrow}}{[0,a]} \times \dots \times T_n}_{\text{k times}} \rightarrow \underbrace{T_n \times \dots \times T_n}_{\text{k times}}   
    \end{equation*}
    take values in \(\mathcal{E}\).
    \item[(iv)] For all integers \(0 \leq p \leq q \leq n\) and \(1 \leq r \leq k\), the square
    % https://q.uiver.app/#q=WzAsNixbMCwwLCJ7XFxvdmVyc2V0e1xcb3ZlcnNldHtcXHRleHR7XFwoclxcKS10aCBwbGFjZX19e1xcZG93bmFycm93fX17RihpXzFqXzEgICwgXFxkb3RzICwgMHAgLCBcXGRvdHMsIGlfa2pfayl9fSJdLFsyLDAsIntcXG92ZXJzZXR7XFxvdmVyc2V0e1xcdGV4dHtcXChyXFwpLXRoIHBsYWNlfX17XFxkb3duYXJyb3d9fXtGKGlfMWpfMSAgLCBcXGRvdHMgLCAwcSAsIFxcZG90cywgaV9ral9rKX19Il0sWzAsMiwieyogPSBcXHVuZGVyc2V0e1xcdW5kZXJzZXR7XFx0ZXh0e1xcKHJcXCktdGggcGxhY2V9fXtcXHVwYXJyb3d9fXtGKGlfMWpfMSAgLCBcXGRvdHMgLCBwcCAsIFxcZG90cywgaV9ral9rKX19Il0sWzEsMV0sWzIsMiwie1xcdW5kZXJzZXR7XFx1bmRlcnNldHtcXHRleHR7XFwoclxcKS10aCBwbGFjZX19e1xcdXBhcnJvd_kl19e0YoaV8xal8xICAsIFxcZG90cyAsIHBxICwgXFxkb3RzLCBpX2tqX2spfX0iXSxbMiwxXSxbMCwyXSxbMCwxXSxbMiw0XSxbMSw0XSxbNCwzLCIiLDAseyJzdHlsZSI6eyJuYW1lIjoiY29ybmVyIn19XV0=
\begin{equation*}\begin{tikzcd}
	{{\overset{\overset{\text{\(r\)-th place}}{\downarrow}}{F(i_1j_1  , \dots , 0p , \dots, i_kj_k)}}} && {{\overset{\overset{\text{\(r\)-th place}}{\downarrow}}{F(i_1j_1  , \dots , 0q , \dots, i_kj_k)}}} \\
	& {} & {} \\
	{{* = \underset{\underset{\text{\(r\)-th place}}{\uparrow}}{F(i_1j_1  , \dots , pp , \dots, i_kj_k)}}} && {{\underset{\underset{\text{\(r\)-th place}}{\uparrow}}{F(i_1j_1  , \dots , pq , \dots, i_kj_k)}}}
	\arrow[from=1-1, to=1-3]
	\arrow[from=1-1, to=3-1]
	\arrow[from=1-3, to=3-3]
	\arrow[from=3-1, to=3-3]
	\arrow["\lrcorner"{anchor=center, pos=0.125, rotate=180}, draw=none, from=3-3, to=2-2]
\end{tikzcd}\end{equation*}
    is a coCartesian square in \(\mathcal{C}\).
\end{enumerate}
For convenience, we abbreviate $\text{Ex}_{\ii}(\operatorname{N}({T_{n}}^k),\CC)$ as ${S_n}^{(k)}\CC$. 
\end{construction}

We are particularly interested in the case \(n=2\). To explore this further, let us examine some examples of \( {S}^{(k)}_2\CC\) for \(k=1\) and 2, for a given exact \(\infty\)-category \(\CC\). Note that $S_2^{(0)}\CC:= \CC$.
\begin{itemize}
    \item \(S_2\CC\)
   
    \(F \in S_2\CC\) can be depicted in the below diagram, which represents an exact sequence in \(\CC\).
\begin{equation*}
    % https://tikzcd.yichuanshen.de/#N4Igdg9gJgpgziAXAbVABwnAlgFyxMJZABgBpiBdUkANwEMAbAVxiRADEAKYgRgEoQAX1LpMufIRQ9yVWoxZsuxAEwDho7HgJFlM6vWatEHTj1VDZMKAHN4RUADMAThAC2SMiBwQk0r3SwGNhwAoPUQZzdfam8kXRAACxg6KGCAdwgklIRBCkEgA
\begin{tikzcd}
F(01) \arrow[r, tail] & F(02) \arrow[r, two heads] & F(12)
\end{tikzcd}
\end{equation*}

{\item \(S_2^{(2)}\CC\)

\(F \in S_2^{(2)}\CC\) can be shown as the below diagram, where all the squares are homotopy coherent squares, and the rows and columns represent exact sequences in \(\CC\).
\begin{equation*}
    % https://tikzcd.yichuanshen.de/#N4Igdg9gJgpgziAXAbVABwnAlgFyxMJZABgBpiBdUkANwEMAbAVxiRADEAKYgRnJ4CUIAL6l0mXPkIo+lavWatEHbgCZ+Q0eOx4CRdXNqMWbLjwOCRYkBh1SiZHlSOLT3Was3XbkvTNJO8sZKKsQGnlbavtLI6oEuJspm4V5RujFkqs4KiaF85qk2EulEfFlBrklqARFaRXZ+saTlCSHJNZrOMFAA5vBEoABmAE4QALZIZCA4EEgAzBWJYEwMDNQMdABGMAwACsX2ygwwgzgg1Dh0WAxsl9eRICPjkxeziHytSMur61s7+w1pCBjqdztMrjdlHcbnUnhN3q8kAAWRZKb5rYF-PYHPzAk5nC4Q25E2GjeEfGZIdSfRDo37bbGAtgggkgAAWMDoUFuAHcIByuQhSc9ENTKYgAKyor4rDEbBkA6LM-Fg6HE+7C+ELaZvABs0tpsvp-xxQJZYIF3KhfMtQuscPmiMQKJpdMxCtNytBhPuUJJ9rJyKdAHYDW75SamUcVdRLbz+ZyoHahoHnU6pa6je7I0ro972Yn47aHg7JU6ABxhrMRxm5vH5uPWhOCkup-U6pChzM-bO1kd_p o d_1lqv0agMirviyvduVYxX9+usxvTG2JoUUYRAA
\begin{tikzcd}
{F(01,01)} \arrow[d, tail] \arrow[r, tail]      & {F(02,01)} \arrow[d, tail] \arrow[r, two heads]      & {F(12,01)} \arrow[d, tail]      \\
{F(01,02)} \arrow[d, two heads] \arrow[r, tail] & {F(02,02)} \arrow[d, two heads] \arrow[r, two heads] & {F(12,02)} \arrow[d, two heads] \\
{F(01,12)} \arrow[r, tail]                      & {F(02,12)} \arrow[r, two heads]                      & {F(12,12)}                     
\end{tikzcd}
\end{equation*}
}
\end{itemize}

\begin{remark}

For every exact \(\infty\)-category \(\CC\) and every \(n \geq 0\), \( S_2^{(n)}\CC \) is a pointed \(\infty\)-category. In fact, the constant functor \(\underset{x \mapsto *}{*: \operatorname{N}(T^n_2)\rightarrow \CC} \) serves as the zero object.
\end{remark}

 \subsection{Exactness of $S_2^{(n)}\CC$}
\begin{remark}\label{rem:pointwise-colimit-and-limit}
    \cite[Corollary 5.1.2.3]{lurie2008highertopostheory}   Let \( K \) and \( S \) be simplicial sets, and let \( \mathcal{C} \) be an \(\infty\)-category which admits \( K \)-indexed colimits. Then:
\begin{enumerate}
    \item[(i)] The \(\infty\)-category \( \text{Fun}(S, \mathcal{C}) \) admits \( K \)-indexed colimits.
    \item[(ii)] A map \( K^ {\triangleright} \to \text{Fun}(S, \mathcal{C}) \) is a colimit diagram if and only if, for each vertex \( s \in S \), the induced map \( K^ {\triangleright} \to \mathcal{C} \) is a colimit diagram. 
\end{enumerate}
    Moreover, the dual statement holds for limit diagrams.
\end{remark}

\begin{proposition}\label{prop:exactness-S_2C}
  If $\CC$ be an exact $\ii$-category, then so is $S_2\CC$. 
\end{proposition}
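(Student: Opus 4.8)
The plan is to present $S_2\CC$ as the full subcategory of $\text{Fun}(\operatorname{N}(T_2),\CC)$ spanned by the short exact sequences $F(01)\to F(02)\to F(12)$ — which, as the unpacking of Construction \ref{cons:S_n-construction} for $n=2$ already shows, is exactly what the conditions (i)--(iv) amount to — to equip it with the \emph{levelwise} structure (a natural transformation $\alpha$ is declared a cofibration, resp.\ fibration, precisely when every $\alpha(x)$ is one in $\CC$), and to verify each clause of Definition \ref{def:exact infinity categories} by transporting it to $\CC$ one object of $\operatorname{N}(T_2)$ at a time, using that (co)limits in $\text{Fun}(\operatorname{N}(T_2),\CC)$ are computed pointwise (Remark \ref{rem:pointwise-colimit-and-limit}). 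Since $\text{Fun}(\operatorname{N}(T_2),\CC)$ is again additive, the additivity of $S_2\CC$ follows once one checks that $S_2\CC$ is closed under the levelwise biproduct $F\oplus G$: the defining conditions are stable under finite biproducts because, by (E2), a biproduct of cofibrations (resp.\ fibrations) is again one — each leg of $f\oplus f'$ is a pushout (resp.\ pullback) of $f$ or $f'$ along a split inclusion (resp.\ projection) — and because a finite biproduct of coCartesian (resp.\ Cartesian) squares is again coCartesian (resp.\ Cartesian). The comparison map $F\sqcup G\to F\times G$ and the shear map of $S_2\CC$ are then levelwise those of $\CC$, hence equivalences. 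Axiom (E1) is immediate, as equivalences of $S_2\CC$ are levelwise equivalences and $\M,\E$ contain all equivalences of $\CC$; and $\M_{S_2\CC},\E_{S_2\CC}$ are subcategories because $\M,\E$ are.

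The substantive clause is (E2): that $S_2\CC$ admits pushouts of cofibrations and pullbacks of fibrations, \emph{computed levelwise}. Given a levelwise cofibration $\alpha\colon F\to G$ and an arbitrary $\beta\colon F\to H$ in $S_2\CC$, each pointwise pushout $G(x)\sqcup_{F(x)}H(x)$ exists by (E2)(i), so $P:=G\sqcup_F H$ exists in $\text{Fun}(\operatorname{N}(T_2),\CC)$ and is computed pointwise (Remark \ref{rem:pointwise-colimit-and-limit}); moreover $H\to P$ is pointwise a pushout of $\alpha$, hence a levelwise cofibration, so it lies in $\M_{S_2\CC}$ and $\M_{S_2\CC}$ is stable under this pushout. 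What remains — and this is the only non-formal point — is that $P$ again lies in $S_2\CC$. Using additivity of $\text{Fun}(\operatorname{N}(T_2),\CC)$, $P$ is identified with the cofiber of a levelwise cofibration $F\to G\oplus H$ (the map $(\alpha(x),-\beta(x))$ factors, up to the unipotent automorphism of $G(x)\oplus H(x)$, as a split inclusion followed by $\alpha(x)\oplus\mathrm{id}$, hence is a cofibration), and $G\oplus H$ lies in $S_2\CC$ by the first paragraph. So it suffices to prove: \emph{if $\varphi\colon F\to G$ is a levelwise cofibration between objects of $S_2\CC$, then its levelwise cofiber $G/F$ again lies in $S_2\CC$.}

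For $G/F$, conditions (i) and (iv) are formal interchanges of colimits: $(G/F)(ii)=*$, and $(G/F)(12)=G(12)/F(12)$ is canonically the total cofiber of the square of $F$- and $G$-terms, hence also the pushout $*\sqcup_{(G/F)(01)}(G/F)(02)$. Condition (iii) then follows from (iv) and (E3)(i) in $\CC$: once $(G/F)(01)\to(G/F)(02)$ is a cofibration and the square to $(G/F)(12)$ is coCartesian, the map $(G/F)(02)\to(G/F)(12)$ is a fibration. Thus everything funnels into condition (ii): the induced map $G(01)/F(01)\to G(02)/F(02)$ is a cofibration. This is precisely the $3\times 3$ (``nine'') lemma for exact $\infty$-categories applied to the array whose rows are the short exact sequences underlying $F$ and $G$ and whose columns are $F(0j)\to G(0j)\to (G/F)(0j)$ — short exact by (E3) applied to the cofibration $\varphi(0j)$ — namely: an array with all columns and the top two rows short exact has short exact bottom row. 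I expect this lemma to be the main obstacle, since it is the one step that uses genuine structure of exact $\infty$-categories rather than formal (co)limit bookkeeping; I would either invoke it from the established theory or prove it directly from (E1)--(E3) by the $\infty$-categorical snake argument (the fibre of $G(01)/F(01)\to G(02)/F(02)$ over each point is the fibre of $\varphi$ there, which vanishes, forcing that map to be a cofibration via (E3)). The second half of (E2) — closure under pullbacks of fibrations, via the dual statement for levelwise fibers — is handled by the formally dual arguments.

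Finally, (E3): a square in $S_2\CC$ of the relevant shape is coCartesian in $S_2\CC$ if and only if it is pointwise coCartesian, because by the part of (E2) just established the pushout in $S_2\CC$ agrees with the pointwise one (Remark \ref{rem:pointwise-colimit-and-limit}); so when $g$ is a cofibration we apply (E3)(i) in $\CC$ at each object of $\operatorname{N}(T_2)$ to conclude that $f$ is a levelwise fibration and the square is pointwise Cartesian, hence $f\in\E_{S_2\CC}$ and, by the dual remark on pullbacks, the square is Cartesian in $S_2\CC$. Part (ii) of (E3) is dual. This completes the verification that $(S_2\CC,\M_{S_2\CC},\E_{S_2\CC})$ satisfies Definition \ref{def:exact infinity categories}.
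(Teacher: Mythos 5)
Your proposal is correct and follows essentially the same route as the paper's first proof: the levelwise cofibration/fibration structure, pointwise computation of pushouts and pullbacks via Remark \ref{rem:pointwise-colimit-and-limit}, and the nine lemma (Lemma \ref{9-lemma}) as the one non-formal ingredient showing the pushout $P$ again lies in $S_2\CC$. The only difference is a minor streamlining of (E2) --- you reduce $P\in S_2\CC$ to cofiber-closure via the factorization $P\simeq\operatorname{cofib}(F\rightarrowtail G\oplus H)$ and one application of the nine lemma, where the paper argues directly with the grid on $H$, $P$, $G/F$ and applies the nine lemma twice --- which does not change the substance of the argument.
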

 Before we proceed with the proof, we first state a useful lemma. While this lemma has been established for exact ordinary categories, its analog also applies to exact \(\ii\) categories. The proof of this lemma can be find in \cite[Lemma 1.11]{børve2023theoremretakhexactinftycategories}

    \begin{lemma} \label{9-lemma}
    \cite[Corollary 3.6]{buehler2009exactcategories}
    Given an $\ii$-category $\CC$, consider a homotopy coherent diagram
    \begin{equation*}
% https://tikzcd.yichuanshen.de/#N4Igdg9gJgpgziAXAbVABwnAlgFyxMJZABgBpiBdUkANwEMAbAVxiRAA0QBfU9TXfIRQBGclVqMWbAJrdeIDNjwEiAJjHV6zVohAAtOXyWCiZYeK1Td7AOSGF-ZUOSjzmyTpDS7PIwJUo6m4S2mx6PvKK-s5kqhYebOwAOiD2UU5EonHuobrSKWmOJoGk2SFW+gVc4jBQAObwRKAAZgBOEAC2SGQgOBBIAMzUOHRYDGwjY-ZtnYPD-YgAbNQAFjB0UBMA7hBrGwi+IDNdiD19SKK9o+O6kGCsh8cX80jqIHubiGBMDAyP7Sc3udEABWYbXCYQ-6zRCXYEAFnBU10k3G0JOiN6CwA7Kt1p9ejsPgd5E8li9ELirsjwARWNQGFh7mw4BBGZt0UgwVikAAOPEbba7fEkloApBU4H8974pDfX4MpmeVns6bixBDHmITGoth3B6k9U6hbcj5yn5-ChcIA
\begin{tikzcd}
X \arrow[d, tail] \arrow[r]       & Y \arrow[r] \arrow[d, tail]       & Z \arrow[d, tail]       \\
X' \arrow[d, two heads] \arrow[r, "f"] & Y' \arrow[d, two heads] \arrow[r , "g"] & Z' \arrow[d, two heads] \\
{X''} \arrow[r]                    & {Y''} \arrow[r]                    & {Z''}                   
\end{tikzcd}
 \end{equation*}
in which the columns are short exact sequences in $\CC$ and assume in addition that one of the following conditions holds:
\begin{itemize}
    \item[(i)] The middle row and either one of the outer rows is short exact;
    \item[(ii)] The two outer rows are short exact and \( g \circ f \simeq *\).
\end{itemize}
Then the remaining row is short exact as well.
 \end{lemma}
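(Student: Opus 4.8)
The plan is to reduce short exactness to a statement about biCartesian squares and then to run one uniform argument that covers both cases, the dual cases being obtained by reversing all arrows. Throughout, for a cofibration $a\colon A\rightarrowtail B$ I write $B/A:=\operatorname{cofib}(a)$, and I freely use the pasting/cancellation lemma for pushout and pullback squares together with the axioms (E1)--(E3). Two auxiliary facts do the heavy lifting. First, a \emph{Noether lemma}: for composable cofibrations $A\rightarrowtail B\rightarrowtail C$ the induced map $B/A\to C/A$ is again a cofibration, fitting into a short exact sequence $B/A\rightarrowtail C/A\twoheadrightarrow C/B$; this is proved by forming the pushout of $B\rightarrowtail C$ along $B\to B/A$, observing via the pasting lemma that the resulting corner is $C/A$, and invoking stability of cofibrations under pushout (E2)(i). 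Its dual gives, for composable fibrations $A\twoheadrightarrow B\twoheadrightarrow C$, a short exact sequence $\operatorname{fib}(A\to B)\rightarrowtail\operatorname{fib}(A\to C)\twoheadrightarrow\operatorname{fib}(B\to C)$. Second, a \emph{rigidity} statement: a map of short exact sequences that is an equivalence on the sub- and on the quotient-term is an equivalence on the total term; this follows from the pasting lemma applied to the two defining biCartesian squares.

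For case (i), by duality it suffices to treat ``top and middle $\Rightarrow$ bottom''. Form the pushout $W:=X'\cup_X Y$; since $X\rightarrowtail Y$ and $X\rightarrowtail X'$ are cofibrations, (E2)(i) and the pasting lemma yield short exact sequences $X'\rightarrowtail W\twoheadrightarrow Z$ and $Y\rightarrowtail W\twoheadrightarrow X''$. The composite $Y'\twoheadrightarrow Z'\twoheadrightarrow Z''$ is a fibration, and the dual Noether lemma computes its fibre as an extension $X'\rightarrowtail \operatorname{fib}(Y'\to Z'')\twoheadrightarrow Z$. Using the coherence of the grid, which makes both $X'\to Y'\to Z''$ and $Y\to Y'\to Z''$ canonically null, the comparison $c\colon W\to Y'$ lifts to $\operatorname{fib}(Y'\to Z'')$ and is an equivalence by rigidity. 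Hence the square exhibiting $W$ as that fibre is Cartesian with fibration $Y'\twoheadrightarrow Z''$, so (E3)(ii) shows $c\colon W\rightarrowtail Y'$ is a cofibration. Applying the Noether lemma to $Y\rightarrowtail W\xrightarrow{c}Y'$ then shows $X''=W/Y\rightarrowtail Y'/Y=Y''$ is a cofibration with cofibre $Y'/W\simeq Z''$, and by (E3)(i) the bottom row is short exact.

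Case (ii) runs on exactly the same template, now exploiting the hypothesis $g\circ f\simeq *$. Forming $P:=X'\cup_X Y$ gives short exact sequences $X'\rightarrowtail P\twoheadrightarrow Z$ and $Y\rightarrowtail P\twoheadrightarrow X''$; the composite fibration $Y'\twoheadrightarrow Y''\twoheadrightarrow Z''$ (from the middle column and the bottom row) has fibre an extension $Y\rightarrowtail\operatorname{fib}(Y'\to Z'')\twoheadrightarrow X''$, and coherence identifies $P$ with this fibre, so $c\colon P\rightarrowtail Y'$ is a cofibration by (E3)(ii). Then $f$ is the composite of cofibrations $X'\rightarrowtail P\xrightarrow{c}Y'$, hence a cofibration, and the Noether lemma produces a short exact sequence $Z\rightarrowtail \operatorname{cofib}(f)\twoheadrightarrow Z''$. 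The chosen nullhomotopy of $g\circ f$ yields a factorization $\operatorname{cofib}(f)\to Z'$, which is a map of extensions of $Z''$ by $Z$ inducing the identity on both ends; by rigidity it is an equivalence, so $Z'\simeq\operatorname{cofib}(f)$ and the middle row is short exact.

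The main obstacle is precisely the step upgrading a coCartesian comparison to a genuine cofibration. In a stable $\infty$-category this is automatic, but in an exact $\infty$-category one may form only the pushouts and pullbacks sanctioned by (E2), so each comparison must be routed through an allowable fibre (or cofibre) and then re-interpreted via (E3). Getting the homotopy-coherent nullhomotopies in the $3\times 3$ grid to assemble into the single nullhomotopy needed to define the comparison map $c$, and to see it is an equivalence, is the delicate bookkeeping; the rigidity lemma is the technical linchpin that makes this go through.
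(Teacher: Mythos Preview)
The paper does not give its own proof of this lemma: it merely records the classical citation to B\"uhler and points to \cite[Lemma 1.11]{børve2023theoremretakhexactinftycategories} for the $\infty$-categorical statement. So there is no in-paper argument to compare against; your write-up is in effect supplying what the paper outsources.

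Your argument is essentially correct and follows the classical template (Noether/third-isomorphism lemma plus the short five-lemma), carefully routed through (E2)--(E3) so that every pushout or pullback you form is admissible. The reductions in both cases---building $W=X'\cup_X Y$, identifying it with a fibre of an admissible fibration $Y'\twoheadrightarrow Z''$, then reading off the missing row via the Noether lemma---are exactly the moves one finds in B\"uhler's proof of \cite[Corollary 3.6]{buehler2009exactcategories}, transplanted to the $\infty$-setting, and your closing paragraph correctly flags the one genuinely new difficulty (packaging the coherent nullhomotopies into a single lift $c$).

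One point deserves a firmer justification: the ``rigidity'' statement (a map of short exact sequences that is an equivalence on the outer terms is an equivalence on the middle term) is the short five-lemma, and it does \emph{not} drop out of the pasting lemma alone. In the square
\[
\begin{tikzcd}
A \arrow[r,tail]\arrow[d] & B \arrow[d] \\ * \arrow[r] & C
\end{tikzcd}
\]
the object $B$ is neither a limit nor a colimit of $A$, $*$, $C$, so comparing two such biCartesian squares via pasting does not by itself force the middle map to be an equivalence. The result is true in exact $\infty$-categories, but it needs an honest argument---e.g.\ factoring through an admissible fibre/cofibre and invoking (E3) once more, or appealing to the fully faithful embedding of an additive $\infty$-category into a stable one where the five-lemma is immediate. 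I would either cite this explicitly (it is part of what B{\o}rve proves) or sketch the extra step.
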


\begin{proof}
 By the construction in \ref{prop:exactness-S_2C}, we know that the objects of \( S_2\CC \) are short exact sequences in \( \CC \). We say that \( \alpha: F \to G \) is a cofibration (resp. fibration) in \( S_2\CC \) if it is a pointwise cofibration (resp. fibration) in \( \CC \). Let \( \M' \) (resp. \( \E' \)) denote the subcategory of \( S_2\CC \) spanned by cofibrations (resp. fibrations). We now claim that, with these definitions, the \(\ii\)-category \((S_2\CC, \M', \E')\) is an exact \(\ii\)-category.
 
We have already shown that \( S_2\CC \) is an additive \(\ii\)-category. It remains, therefore, to verify the axioms of definition \ref{def:exact infinity categories} for \( S_2\CC \). 

\begin{itemize}
    \item[\text{(E1)}]
Let \( \alpha : F \to G \) be an equivalence in \( S_2\CC \). By Theorem \ref{thm:pointwise-equivalence}, for every \( x \in \operatorname{N}(T_2) \), the induced morphism \( \alpha(x) : F(x) \to G(x) \) is an equivalence in \( \CC \). This implies that each \( \alpha(x) \) is a cofibration in \( \CC \), so by the definition of \( \M' \), we have \( \alpha \in \M' \). Therefore, \( \M' \) contains all the equivalences in \( S_2\CC \). The dual statement holds for \( \E' \).

    \item[\text{(E2)}]
We prove case (i), and the proof of case (ii) follows by duality. Consider the diagram \( \varphi: \Lambda_0^2 \to S_2\CC \) represented below:
\begin{equation*}
\begin{tikzcd}
  F \arrow[r, tail] \arrow[d] & G \\
  H                           &  
\end{tikzcd}
\end{equation*}
Let the horizontal morphism be a cofibration in \( S_2\CC \) and the vertical morphism be arbitrary. Define \( \Bar{\varphi} : (\Lambda_0^2)^{\triangleright} \simeq \Delta^1 \times \Delta^1 \to S_2\CC \), such that \( \Bar{\varphi} \mid_{\Lambda_0^2} := \varphi \) and \( \Bar{\varphi} \mid_{\emptyset * \Delta^0} := P \), where \( P \in \text{Fun}(\operatorname{N}(T_2), \CC) \) assigns to each object \( x \in \operatorname{N}(T_2) \) the pushout of the  following diagram:
\begin{equation*}
\begin{tikzcd}
  F(x) \arrow[r, tail] \arrow[d] & G(x) \\
  H(x)                            &     
\end{tikzcd} 
\end{equation*}
By the exactness of \( \CC \), such a pushout exists for each \( x \). Moreover, by Theorem \ref{rem:pointwise-colimit-and-limit}, \( \Bar{\varphi} \) forms a colimit cone in the \( \ii \)-category \( \text{Fun}(\operatorname{N}(T_2), \CC) \). By taking colimits in both the vertical and horizontal directions of the homotopy coherent diagram
\begin{equation*}
    % https://tikzcd.yichuanshen.de/#N4Igdg9gJgpgziAXAbVABwnAlgFyxMJZARgBpiBdUkANwEMAbAVxiRADEAKABmIEoQAX1LpMufIRQAmclVqMWbAOI9+QkSAzY8BImSlz6zVog48pA4aO0Si3WdSOLTACVWWNW8bpT2DjhRMQN24LdWtvSWQZf3ljZXMPCJ0osm5DQLYAKnDNMRS7UnSA+NMcqzybH2iijNKQcrkYKABzeCJQADMAJwgAWyR7EBwIJDJhuiwGNhxJ6Yqe-sHqEaQZCamZudzFgcQh1cQAZgXevaOV0cQAFhXt01nN06XEdcPr5723q4BWO82HttPmNLkg-htpoCnhpdsthlcAGzUAAWMDoUCQYCYDAYwOOoMQAHYUWiMYgsTi8eNDgAOEnozHY3Ews5IJHwpB0kCohnkpn-SEQnasxDsw6EwQUQRAA
\begin{tikzcd}
*                               & * \arrow[r, tail] \arrow[l]                               & *                               \\
H(01) \arrow[d, tail] \arrow[u] & F(01) \arrow[r, tail] \arrow[d, tail] \arrow[l] \arrow[u] & G(01) \arrow[d, tail] \arrow[u] \\
H(02)                           & F(02) \arrow[l] \arrow[r, tail]                           & G(02)                          
\end{tikzcd}
\end{equation*}
in both possible ways, we conclude that the square
\begin{equation*}
    % https://tikzcd.yichuanshen.de/#N4Igdg9gJgpgziAXAbVABwnAlgFyxMJZABgBpiBdUkANwEMAbAVxiRAAUAKYgRgEoQAX1LpMufIRQ9yVWoxZsuxAEwDho7HgJFpPWfWatEHTj1VCRIDJolEye6gYXGAVENkwoAc3hFQAMwAnCABbJDIQHAgkAGZ1ECDQ2OoopGV4xLDECNTEHgzgrOlI6MR0ikEgA
\begin{tikzcd}
P(01) \arrow[d] \arrow[r , "f"] & P(02) \arrow[d , "g"] \\
* \arrow[r ]               & P(12)          
\end{tikzcd}
\end{equation*}
is Cartesian. Similarly, we can observe that it is also a coCartesian square by taking colimits in the horizontal direction and limits in the vertical direction of the following homotopy coherent diagram, again in both possible ways.
\begin{equation*}
    % https://tikzcd.yichuanshen.de/#N4Igdg9gJgpgziAXAbVABwnAlgFyxMJZABgBpiBdUkANwEMAbAVxiRAAkAKYgJgEoQAX1LpMufIRQBGclVqMWbAGLd+QkSAzY8BIj1nV6zVohABxVQOGjtEomSlyji01ylrrmsTsnIZjwwUTEBV3Kw0tcV0UfQD5YzYLMPUbKN8yHicgtgAqFK9baL9STMCE0zzPSJ89EqzykEq5GCgAc3giUAAzACcIAFskGRAcCCRiT16BoepRpH0RuiwGNhwllcm+wcQAFlmxxABWWfXV083p3f2kAGYL7YB2a8QANnukJ5GDgA4T5bP-u9EL8vkhDkDPnNdkCXs87hoptsyKDEDdqAALGB0KCrADuEEx2IQQOGUL2IEJONMOHxlOJCK282exwpWKpI1pbOJFEEQA
\begin{tikzcd}
H(02) \arrow[d, two heads] & F(02) \arrow[l] \arrow[r, tail] \arrow[d, two heads] & G(02) \arrow[d, two heads] \\
H(12)                      & F(12) \arrow[r, tail] \arrow[l]                      & G(12)                      \\
* \arrow[u]                & * \arrow[l] \arrow[r, tail] \arrow[u]                & * \arrow[u]               
\end{tikzcd}
\end{equation*}
As a result, we obtain the equivalence \( g \circ f \simeq * \). Now, consider the following homotopy coherent diagram, where \( \nicefrac{G(x)}{F(x)} \) denotes the cofiber of the map \( F(x) \to G(x) \) :
\begin{equation*}
    % https://tikzcd.yichuanshen.de/#N4Igdg9gJgpgziAXAbVABwnAlgFyxMJZABgBpiBdUkANwEMAbAVxiRADEAKYgRgEoQAX1LpMufIRQ9yVWoxZsuxAEwDho7HgJFlM6vWatEHTj1VCRIDJolEyPWQYXGA4t34WN47VNIP98kYgbipqltbeksi6-nKGbG5mYV5aUWTKjoFsADrZYFgAxjAAZgBOdAXAIfyCwEo1nlZiqUTSGQHxxrn5RWUVVe58tfVDjREtKLrtcc4g3YUl5ZXVQ3WDgkKyMFAA5vBEoGUQALZIZCA4EEjSF3RYDGw4dw-qIEeniDeXSLogABYwOhQR4AdwgAKBCFe7zO1G+iAAzHDno8UdDSickEiLldEAA2agQ4HGHBgolQywwz5w3EAFmR91RjPRmMQ9JxSAA7ITAcSLmTeRTDhiPr94QBWBkPEloykipCSjmIAAcPKBoPBgsaVO5SuVLI+BKVnINWJpSHZT0ZMuZctZltxiqJGvJm0EQA
\begin{tikzcd}
F(01) \arrow[r, tail] \arrow[d, tail]      & F(02) \arrow[r, two heads] \arrow[d, tail]      & F(12) \arrow[d, tail]      \\
G(01) \arrow[d, two heads] \arrow[r, tail] & G(02) \arrow[d, two heads] \arrow[r, two heads] & G(12) \arrow[d, two heads] \\
\nicefrac{G(01)}{F(01)} \arrow[r]          & \nicefrac{G(01)}{F(01)} \arrow[r]               & \nicefrac{G(01)}{F(01)}   
\end{tikzcd}
\end{equation*}
By the Lemma \ref{9-lemma}, the bottom row of the above diagram in a short exact sequence in $\CC$. moreover, for every $x \in \operatorname{N}(T_2)$,  given the diagram 
\begin{equation*}
    % https://tikzcd.yichuanshen.de/#N4Igdg9gJgpgziAXAbVABwnAlgFyxMJZABgBpiBdUkANwEMAbAVxiRADEAKADwEoQAvqXSZc+QigCM5KrUYs2ACR78hI7HgJEATDOr1mrRCABUg4SAwbxRaZNkGFxgAorz6sVpS77++UZAAHUCwLABjGAAzACc6MOAAcRUBYC4+AXdLUU0JElJfOUM2JL5BWRgoAHN4IlAYiABbJDIQHAgkSTUQeqbEaVb2xG0unubqNqQAVnG6LAY2HFn5kejGjvHBgGYZuYWlzNGhjaQAFhW1xGmBpG2QAAsYOigFgHcIB6eEc97bicQzigCIA
\begin{tikzcd}
F(x) \arrow[r] \arrow[d, tail] & H(x) \arrow[r] \arrow[d, tail] & * \arrow[d]           \\
G(x) \arrow[r, two heads]      & P(x) \arrow[r]                 & \nicefrac{G(x)}{F(x)}
\end{tikzcd}
\end{equation*}
where both the left square and the outer rectangle are pushouts. From this, it follows that the right square is also a pushout. By the exactness of $\CC$, this square is also a pullback. Consequently, we obtain short exact sequences % https://tikzcd.yichuanshen.de/#N4Igdg9gJgpgziAXAbVABwnAlgFyxMJZABgBpiBdUkANwEMAbAVxiRAAkAKADwEoQAvqXSZc+QigCM5KrUYs2ABR78hI7HgJEATDOr1mrRCAA6JsFgDGMAGYAnOpeABxFQOAAxN4NkwoAc3giUHsIAFskMhAcCCRpaLosBjYcROS1EFCIxHiYpF0QAAsYOigUgHcIYtKEAQoBIA
\begin{tikzcd}
H(x) \arrow[r, tail] & P(x) \arrow[r, two heads] & \nicefrac{G(x)}{F(x)}
\end{tikzcd} in $\CC$. Consider the following homotopy coherent diagram:
\begin{equation*}
\begin{tikzcd}
H(01) \arrow[r, tail] \arrow[d, tail]   & H(02) \arrow[r, two heads] \arrow[d, tail]   & H(12) \arrow[d, tail]      \\
P(01) \arrow[d, two heads] \arrow[r , "f"]    & P(02) \arrow[d, two heads] \arrow[r , "g"]         & P(12) \arrow[d, two heads] \\
\nicefrac{G(01)}{F(01)} \arrow[r, tail] & \nicefrac{G(01)}{F(01)} \arrow[r, two heads] & \nicefrac{G(01)}{F(01)}   
\end{tikzcd}
\end{equation*}
Here, the top row, the bottom row, and all the columns are short exact sequences in \( \CC \). Additionally, the equivalence \( g \circ f \simeq * \) holds, and by applying Lemma \ref{9-lemma}, we verify that the sequence $\begin{tikzcd}
P(01) \arrow[r, "f"] & P(02) \arrow[r, "g"] & P(12)
\end{tikzcd}$
is short exact in \( \CC \). Therefore, the functor \( P \) belongs to \( S_2\CC \), which completes the proof of the axiom (E2) for the \( \ii \)-category \( S_2\CC \).
    \item[\text{(E3)}] 
By applying Lemma \ref{9-lemma}, the result follows easily, so we leave the proof of this part as an exercise for the reader.
    \end{itemize}

\end{proof}

\begin{proof}[A Second Proof for Proposition \ref{prop:exactness-S_2C} ]
   Given that \(\mathcal{C}\) is exact, by the construction in \cite[5.15]{Barwick_2015}, observe that \(S_2\CC\) is a Waldhausen \(\infty\)-category. To prove that \(S_2\CC\) is an exact \(\infty\)-category, we refer to \cite[Corollary 4.8.1]{Barwick_2015} and must verify the following conditions:

    \begin{itemize}
        \item The underlying $\ii$-category $S_2\CC$ is additive.

By \cite[Example 2.7]{Gepner_2015} and the additivity of \( \mathcal{C} \), we conclude that the \( \infty \)-category \( \text{Fun}(\operatorname{N}(T_2), \mathcal{C}) \) is additive. Consequently, for every object \( F \in S_2\CC \), viewed as a \( \infty \)-subcategory of \( \text{Fun}(\operatorname{N}(T_2), \mathcal{C}) \), the shear map \( s_F : F \oplus F \to F \oplus F \) is an equivalence in \( S_2\CC \). Therefore, to prove that $S_2\CC$ is an additive $\ii$-category it suffices to show that it admits finite direct sums. For $F,G \in S_2\CC$, viewed as short exact sequences 
\begin{equation*}
    % https://tikzcd.yichuanshen.de/#N4Igdg9gJgpgziAXAbVABwnAlgFyxMJZABgBpiBdUkANwEMAbAVxiRADEAKYgRgEoQAX1LpMufIRQ9yVWoxZsuxAEwDho7HgJFlM6vWatEHTj1VDZMKAHN4RUADMAThAC2SMiBwQk0r3SwGNhwAoP15IxAHIREol3dEP28kXRAACxg6KGCAdwgMrIRwwzYHAHILQSA
\begin{tikzcd}
F(01) \arrow[r, "f", tail] & F(02) \arrow[r, "f'", two heads] & F(12) \ \ \text{and} \ \ G(01) \arrow[r, "g", tail] & G(02) \arrow[r, "g'", two heads] & G(12) ,
\end{tikzcd}
\end{equation*}
 is defined by the following biCartesian square in \( \text{Fun}(\operatorname{N}(T_2), \mathcal{C}) \)
\begin{equation*}
    F \oplus G := \ \ \ \ 
\begin{tikzcd}[ampersand replacement=\&]
F(01) \oplus G(01) \arrow{d} \arrow{r} { \alpha=\begin{pmatrix} f & * \\ * & g \end{pmatrix} } \& F(02)\oplus G(02) \arrow{d} { \beta=\begin{pmatrix} f' & * \\ * & g' \end{pmatrix} } \\ * \arrow{r} \& F(12) \oplus G(12)
\end{tikzcd}
\end{equation*}
To show that this square is indeed an object of \( S_2\CC \), we must verify that the map $\alpha$
is a cofibration in \( \mathcal{C} \). Then, by the exactness of \( \mathcal{C} \), we observe that the direct sums \( F \) and \( G \) can be interpreted through the following short exact sequence in \( \mathcal{C} \):
\begin{equation*} 
  % https://tikzcd.yichuanshen.de/#N4Igdg9gJgpgziAXAbVABwnAlgFyxMJZARgBoAGAXVJADcBDAGwFcYkQAxACnICYBKAAQAdYRDQs4ggOI8BIAL6l0mXPkIpeFanSat23YgJFiJzKbKP9FykBmx4CRctpoMWbRJx7Eho8ZIyPtYKOjBQAObwRKAAZgBOEAC2SFogOBBILun0WIzsOLn5bnqeIKJMaAAW9DZxiSmI2RlIZCBVMPRQBQDuEB1dCCUe7KIARjCFipQKQA
\begin{tikzcd}
F(01) \oplus G(01) \arrow[r, "\alpha", tail] & F(02) \oplus G(02) \arrow[r, "\beta", two heads] & F(12) \oplus G(12)
\end{tikzcd}\end{equation*}
 Consider the following homotopy coherent diagram, where both squares are pushouts:
 \begin{equation*}
  % https://tikzcd.yichuanshen.de/#N4Igdg9gJgpgziAXAbVABwnAlgFyxMJZABgBpiBdUkANwEMAbAVxiRACoQBfU9TXfIRQBGclVqMWbAOIAKYsICU3XiAzY8BIgCYx1es1aIQc4tuU8+GwUTLbxBqcYBi8pSqsCtI0vf2SjEFcFRQACAB1wiDRmOFDTd0s1fk0hZF0-CUM2YKUIqJimOISwyIBjJjQAfWASrnj5c25xGCgAc3giUAAzACcIAFskMhAcCCRRUbosBjYcadmkvsGJ6jGkXSmZuYWPEGWhxE31xABWagYsMECoOjgAC1a9g9XR8cQAFgurm7vHqGe-UOIxOAGYlkCkKC1u8viBLtc2LcHk81rtjPNthCVp8YUhzvCfki-k8uBQuEA
\begin{tikzcd}
* \arrow[r, tail] \arrow[dd]  & G(01) \arrow[r,"g", tail] \arrow[dd, dashed] & G(02) \arrow[dd, dashed]              \\
                              &                                          &                                       \\
F(01) \arrow[r, dashed, tail] & F(01) \oplus G(01) \arrow[r, "\Bar{g}",dashed]     & F(01) \oplus G(01) \cup_{G(01)} G(02)\simeq F(01)\oplus G(02)
\end{tikzcd}
\end{equation*}
As a result by the exactness of $\CC$, the map  $\Bar{g}$ is a cofibration. Similarly, $\Bar{f}$ is a cofibration in the following diagram, where both squares are pushouts.   
    \begin{equation*}
% https://tikzcd.yichuanshen.de/#N4Igdg9gJgpgziAXAbVABwnAlgFyxMJZABgBpiBdUkANwEMAbAVxiRACoQBfU9TXfIRQBGclVqMWbAOIAKYgCYAlN14gM2PASJkF4+s1aIQAMXnCVPPpsFFRe6ganGzxCwB13ENMzgACOUVLNQ0BbRQyABZ9SSNTeWVVazChZFFox1i2V2U-TwBjJjQAfWBXCy4-cqU8rx8mf0DErnEYKABzeCJQADMAJwgAWyQyEBwIJAUrEH6hyepxpEiFuiwGNhxV9eoGOgAjGAYABX4tIRA+rHaACxwQTMM2HqSZgeHEUTGJxABmHawwHEoHQ4Nc2i9Zu8-l8kABWf6AtjA0HglZrDZbCFvEYLb7CaaQpa4uEIoEgsFQLFzRAKYm-UlI8nglpcIA
\begin{tikzcd}
* \arrow[dd] \arrow[r]                         & G(02) \arrow[dd, dashed]                   \\
                                               &                                            \\
F(01) \arrow[dd, "f"', tail] \arrow[r, dashed] & F(01)\oplus G(02) \arrow[dd,"\Bar{f}", dashed, tail] \\
                                               &                                            \\
F(02) \arrow[r, dashed]                        & F(02) \cup_{F(01)} F(01) \oplus G(02)     \simeq F(02) \oplus G(02) 
\end{tikzcd}\end{equation*}

        Therefore, the equivalence $\alpha \simeq \Bar{f}\circ \Bar{g}$ implies that the morphism $\alpha$ is a cofibration in $\CC$, as desired.

        \item The class of morphisms that can be exhibited as the cofiber of some cofibration is closed under pullbacks.

 Assume that $\alpha: F \rightarrowtail G$ is a cofibration in $S_2\CC$. Considering the pushout diagram 
\begin{equation*}
    % https://tikzcd.yichuanshen.de/#N4Igdg9gJgpgziAXAbVABwnAlgFyxMJZABgBpiBdUkANwEMAbAVxiRADEQBfU9TXfIRQBGclVqMWbAOLdeIDNjwEio4ePrNWiEAAk5fJYKJl11TVJ0AqbuJhQA5vCKgAZgCcIAWyRkQOCCRRfzosBjYcUPDqBjoAIxgGAAV+ZSEQBhhXHBBzSW0QAB1CxjQACzoDEA9vIOoApAAmHjdPH0QAZnrAxGb5Gva-Bs6uCi4gA
\begin{tikzcd}
F \arrow[r, "\alpha", tail] \arrow[d] & G \arrow[d, "g"] \\
* \arrow[r]                           & H          
\end{tikzcd} \ ,
\end{equation*}
we need to show that for any arbitrary object $K$ and morphism $f : K \to H $, if the diagram
\begin{equation*}
    % https://tikzcd.yichuanshen.de/#N4Igdg9gJgpgziAXAbVABwnAlgFyxMJZABgBpiBdUkANwEMAbAVxiRAAUQBfU9TXfIRQBGclVqMWbAOLdeIDNjwEio4ePrNWiEAAk5fJYKJl11TVJ0BpbuJhQA5vCKgAZgCcIAWyRkQOCCRRfzosBjZIMFZqBjoAIxgGdn5lIRAGGFccAxAPbyDqAKQAJh43Tx9EAGZCwMRimPjE5KMVHXcsBwALbPNJbVycvMq-IurGhKSU43bOnpA+rTYAHWWEnDpbLiA
\begin{tikzcd}
P \arrow[r] \arrow[d, "\beta"'] & G \arrow[d, "g" ] \\
K \arrow[r, "f"']               & H          
\end{tikzcd}
\end{equation*}
is a pullback diagram in $S_2\CC$, then the morphism $\beta : P \to K$ is the cofiber of some cofibration. As noted in Remark \ref{rem:pointwise-colimit-and-limit}, it suffices to prove the statement pointwise. Let \( x \in \operatorname{N}(T_2) \) (with \( x \neq 00, 11, 22 \), as the statement trivially holds in those cases). Then, by the exactness of \( \mathcal{C} \), the following square is a biCartesian square in \( \mathcal{C} \), and the morphism \( g(x): G(x) \to H(x) \) is a fibration.
\begin{equation*}
    % https://tikzcd.yichuanshen.de/#N4Igdg9gJgpgziAXAbVABwnAlgFyxMJZABgBpiBdUkANwEMAbAVxiRADEAKADwEoQAvqXSZc+QigCM5KrUYs2AcR78hI7HgJFpk2fWatEIABIrBwkBg3iiZXdX0KjAKkGyYUAObwioAGYAThAAtkhkIDgQSNJyBmwAOvGMaAAWdGbUOHRYDGxZOeb+QaGIMZFIAEwO8oYgnhkgKTB0UHkA7hBNLQhqIIEhYZlRiADMvf0lI0OVAhQCQA
\begin{tikzcd}
F(x) \arrow[r, "\alpha(x)", tail] \arrow[d] & G(x) \arrow[d, "g(x)", two heads] \\
* \arrow[r]                                 & H(x)                              
\end{tikzcd} 
\end{equation*}
Again, by the exactness of \( \mathcal{C} \) and Remark \ref{rem:pointwise-colimit-and-limit}, the following square is a biCartesian square in \( \mathcal{C} \), and the morphism \( \beta(x): P(x) \to K(x) \) is a fibration.
\begin{equation*}
   % https://tikzcd.yichuanshen.de/#N4Igdg9gJgpgziAXAbVABwnAlgFyxMJZABgBpiBdUkANwEMAbAVxiRAAUAKADwEoQAvqXSZc+QigCM5KrUYs2AcR78hI7HgJFpk2fWatEIABIrBwkBg3iiZXdX0KjAaTMDZMKAHN4RUADMAJwgAWyQyEBwIJGlIuiwGNkgwVjUQINCY6iikACYHeUMQLzNqAAsYOig2HAB3CAqqhDSMsMQInMQAZmoGOgAjGAZ2UU0JEECsLzKcEAKDNgAdRcGcOlKQRuqjOobKqGaLVqQeyOjEfJA+weHRmyNJ6dn5p3S3CgEgA
\begin{tikzcd}
P(x) \arrow[r] \arrow[d, "\beta(x)"', two heads] & G(x) \arrow[d, "g(x)", two heads] \\
K(x) \arrow[r, "f(x)"']                          & H(x)                             
\end{tikzcd}
\end{equation*}
Consider the following rectangle, where the dotted arrow \( u \) denotes the universal morphism induced by the right pullback square. By the \textbf{Pasting Law for Pullbacks}, we conclude that the left square is also a pullback square. Consequently, by the exactness of \( \mathcal{C} \), the morphism \( u \) is a cofibration in \( \mathcal{C} \), and the square is a pushout square. This implies that \( \beta(x): P(x) \to K(x) \) is the cofiber of some cofibration in \( \mathcal{C} \), as desired.
\begin{equation*}
    % https://tikzcd.yichuanshen.de/#N4Igdg9gJgpgziAXAbVABwnAlgFyxMJZARgBoAGAXVJADcBDAGwFcYkQAFACgA8BKEAF9S6TLnyEUAJgrU6TVuwDivAcNHY8BIjOJyGLNohAAJVUJEgMmiUTJ6aBxcYDS59VbFbJycrMcKRiAAYu6W1uLaKH4O8obsAFRCcjBQAObwRKAAZgBOEAC2SH4gOBBIZKX0WIzskGBsHnmFFTRlSDJxziBP5jQAFjD0UOw4AO4Qg8MITflFiCXtiADMNIz0AEYwjBxetsa5WGn9OCAB8cYAOpdbOPR9IFMjxuOTQ1Azls3zq6XliJ11lsdnsoiBDsdTudutkwjk5kgACxtf4lRhYBrsKD0OCDEbQoLMM4gLZgZ7kWYtRDIv5IACslPmdJRSGWjKRLMQlVJz2WJScQWuTDQ-Xu-GSgiAA
\begin{tikzcd}
F(x) \arrow[r, "u", dashed] \arrow[d] \arrow[rr, "\alpha(x)", bend left] & P(x) \arrow[r] \arrow[d, "\beta(x)"', two heads] & G(x) \arrow[d, "g(x)", two heads] \\
* \arrow[r]                                                              & K(x) \arrow[r, "f(x)"']                          & H(x)                             
\end{tikzcd}
\end{equation*}
        \item  Every cofibration is the fiber of its cofiber.
    \end{itemize}
   
   Let \(\alpha: F \to G\) be a cofibration in \(S_2\CC\). By the definition of a cofibration, for each object \(x \in \operatorname{N}(T_2)\), the induced map \(\alpha(x): F(x) \rightarrowtail G(x)\) is a cofibration in \(\mathcal{C}\). By the exactness of \(\mathcal{C}\), the cofiber of \(\alpha(x)\) exists, and we denote it by \(P(x)\). Thus, the map \(\alpha(x)\) is the fiber of its cofiber \(G(x) \to P(x)\).

We define the functor \(P: \text{Fun}(\operatorname{N}(T_2), \mathcal{C})\) that assigns to each object \(x \in \operatorname{N}(T_2)\) the cofiber of the map \(\alpha(x): F(x) \rightarrowtail G(x)\), as described above. Considering the following diagram and using \cite[Lemma 1.11]{børve2023theoremretakhexactinftycategories}, we conclude that the bottom dotted sequence is a short exact sequence in $\CC$ and as a result \(P \in S_2\CC\). Therefore, \(\alpha\) is the fiber of its cofiber, and the proof is complete.
\begin{equation*}
    % https://tikzcd.yichuanshen.de/#N4Igdg9gJgpgziAXAbVABwnAlgFyxMJZABgBpiBdUkANwEMAbAVxiRADEAKYgRgEoQAX1LpMufIRQ9yVWoxZsuxAEwDho7HgJFlM6vWatEHTj1VCRIDJolEyPWQYXGA4t34WN47VNIP98kYgbipqltbeksi6-nKGbG5mYV5aUWTKjoFsAAruyVZiqUTSGQHxxrmhngU2PtGkpXHOILlJQrIwUADm8ESgAGYAThAAtkhkIDgQSADM1Dh0WAxsC0vVQ6Oz89OIAGzUABYwdFArAO4QRycI6iAbY4gTU0jSk4vLxqvLt-cv20i6EBXU6fC7Am6WX6IQHPRAAVnm7xWSJ+wwer1hABZEWtPijIWikNjJjsAOyHY4gyZgykQgaEvb-RDkt648AEVjUBhYMBBKB0OBHU6ozbwpkADgpJ3Ol1p6wZLNhkqBlKQYCYDAYXJ5fIFQvlormJKJOI+rO+BNFxNhCJV0tBsuu7UEQA
\begin{tikzcd}
F(01) \arrow[d, tail] \arrow[r, tail]      & F(02) \arrow[r, two heads] \arrow[d, tail]      & F(12) \arrow[d, tail]      \\
G(01) \arrow[d, two heads] \arrow[r, tail] & G(02) \arrow[d, two heads] \arrow[r, two heads] & G(12) \arrow[d, two heads] \\
P(01) \arrow[r, dashed]                    & P(02) \arrow[r, dashed]                         & P(12)                     
\end{tikzcd}
\end{equation*}
   
    \end{proof}
\begin{remark}
A morphism \(\alpha: F \to G\) in \(S_2\CC\) is called a fibration if it is a fiberwise fibration, i.e. for each \(x \in \operatorname{N}(T_2)\), the induced morphism \(\alpha(x): F(x) \to G(x)\) is a fibration in \(\CC\). Consequently, by the exact structure of \(S_2\CC\), a short exact sequence in \(S_2\CC\) can be represented by the following homotopy coherent grid, where all rows and columns are short exact sequences in \(\CC\).
\begin{equation*}
    % https://tikzcd.yichuanshen.de/#N4Igdg9gJgpgziAXAbVABwnAlgFyxMJZABgBpiBdUkANwEMAbAVxiRAA0QBfU9TXfIRQBGclVqMWbAJrdeIDNjwEiAJjHV6zVohAAtOXyWCiZYeK1Td7AOSGF-ZUOSjzmyTpDS7PIwJUo6m4S2mx6PvKK-s5kqhYebOwAOiD2UU5EonHuobrSKWmOJoGk2SFW+gVc4jBQAObwRKAAZgBOEAC2SGQgOBBIAMzUOHRYDGwjY-ZtnYPD-YgAbNQAFjB0UBMA7hBrGwi+IDNdiD19SKK9o+O6k+OHxxfzSOoge5u3O+8H8o+Ir+dEABWYbXCZgh7tE6XQEAFlBU1uEN+UKQ8N6CwA7Kt1h9el9cT8Wqils9ENirojKeNqAwsGBPHAIHTNpDZsCyQAOHEbba7QnTEkUwHct64vnfEC0+mM5lYVko9lDDFohE3amC9nowEgsW8z78-bcChcIA
\begin{tikzcd}
X \arrow[d, tail] \arrow[r, tail]       & Y \arrow[r, two heads] \arrow[d, tail]       & Z \arrow[d, tail]       \\
X' \arrow[d, two heads] \arrow[r, tail] & Y' \arrow[d, two heads] \arrow[r, two heads] & Z' \arrow[d, two heads] \\
{X''} \arrow[r, tail]                    & {Y''} \arrow[r, two heads]                    & {Z''}                   
\end{tikzcd}
\end{equation*}

\end{remark}

\begin{lemma} \label{iteration-lemma}
    Given an exact $\ii$-category $\CC$ and non-negative integer $n$, we have the categorical equivalence $S_2^{(n)}\CC \simeq S_2(S_2^{(n-1)}\CC)$.
\end{lemma}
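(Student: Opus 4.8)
The plan is to realize both sides as full subcategories of one iterated functor $\ii$-category and match them under the exponential law. Since the nerve preserves products, $\operatorname{N}({T_2}^{n})\cong\operatorname{N}({T_2}^{n-1})\times\operatorname{N}(T_2)$, and cartesian closedness of simplicial sets supplies an isomorphism of $\ii$-categories
\[
\Phi\colon \text{Fun}\bigl(\operatorname{N}({T_2}^{n}),\CC\bigr)\ \xrightarrow{\ \cong\ }\ \text{Fun}\bigl(\operatorname{N}(T_2),\,\text{Fun}(\operatorname{N}({T_2}^{n-1}),\CC)\bigr),
\]
under which a $T_2^{n}$-diagram $F$ corresponds to the $T_2$-diagram $\widetilde F$ whose value $\widetilde F(mk)\colon\operatorname{N}({T_2}^{n-1})\to\CC$ at an object $mk$ of $T_2$ is the partial evaluation $(i_1j_1,\dots,i_{n-1}j_{n-1})\mapsto F(i_1j_1,\dots,i_{n-1}j_{n-1},mk)$. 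I will argue by induction on $n$. The case $n=1$ is immediate, since $S_2^{(1)}\CC=S_2\CC$ and $S_2^{(0)}\CC=\CC$, so assume $n\ge 2$. By the inductive hypothesis together with Proposition~\ref{prop:exactness-S_2C}, the $\ii$-category $S_2^{(n-1)}\CC$ is exact; moreover, combining the remark that follows the second proof of Proposition~\ref{prop:exactness-S_2C} with the induction, its cofibrations, fibrations, and short exact sequences are all detected pointwise on $\operatorname{N}({T_2}^{n-1})$. In particular $S_2\bigl(S_2^{(n-1)}\CC\bigr)$ is defined.

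The heart of the matter is to check that $\Phi$ restricts to a bijection between the objects of $S_2^{(n)}\CC$ and the objects of $S_2\bigl(S_2^{(n-1)}\CC\bigr)$. Unwinding the definitions, $\widetilde F$ lies in $S_2\bigl(S_2^{(n-1)}\CC\bigr)$ exactly when (a) each partial evaluation $\widetilde F(mk)$ lies in $S_2^{(n-1)}\CC$ --- i.e.\ $F$ satisfies conditions (i)--(iv) of Construction~\ref{cons:S_n-construction} in the coordinate directions $1,\dots,n-1$ --- and (b) $\widetilde F$ sends $00$, $11$, $22$ to the zero object and the sequence $\widetilde F(01)\to\widetilde F(02)\to\widetilde F(12)$ is a short exact sequence in the exact $\ii$-category $S_2^{(n-1)}\CC$. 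Since the exact structure on $S_2^{(n-1)}\CC$ is pointwise --- cofibrations and fibrations being tested on objects of $\operatorname{N}({T_2}^{n-1})$, and the pushout square witnessing short-exactness being computed pointwise by Remark~\ref{rem:pointwise-colimit-and-limit} --- condition (b) is precisely the statement that $F$ satisfies conditions (i)--(iv) in the remaining coordinate direction $n$. Hence (a) together with (b) is equivalent to $F\in S_2^{(n)}\CC$.

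To conclude, observe that both $S_2^{(n)}\CC\subseteq\text{Fun}(\operatorname{N}({T_2}^{n}),\CC)$ and $S_2\bigl(S_2^{(n-1)}\CC\bigr)\subseteq\text{Fun}(\operatorname{N}(T_2),\text{Fun}(\operatorname{N}({T_2}^{n-1}),\CC))$ are full sub-simplicial-sets --- by construction, a simplex belongs to either one if and only if all of its vertices do --- and $\Phi$ is an isomorphism carrying the vertices of the former bijectively onto those of the latter. Therefore $\Phi$ restricts to an isomorphism of simplicial sets, hence in particular a categorical equivalence $S_2^{(n)}\CC\simeq S_2\bigl(S_2^{(n-1)}\CC\bigr)$, and the induction is complete. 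The same bookkeeping shows $\Phi$ matches the pointwise cofibrations and fibrations on the two sides, so the equivalence is in fact one of exact $\ii$-categories, although only the underlying categorical equivalence is needed here.

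I expect the main obstacle to be step (b): one must carefully expand what a short exact sequence in the iterated category $S_2^{(n-1)}\CC$ amounts to, tracing how its exact structure was assembled one copy of $S_2$ at a time, and then recognize the outcome as the direction-$n$ instances of conditions (i)--(iv) of Construction~\ref{cons:S_n-construction}. This is exactly where the pointwise character of the relevant colimits (Remark~\ref{rem:pointwise-colimit-and-limit}) and of the exact structure is used, and it is also why the proof is arranged as an induction rather than a single identification: the pointwise description of the exact structure on $S_2^{(n-1)}\CC$ only becomes available once the lemma is known one level down.
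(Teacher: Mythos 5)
Your proposal is correct and follows essentially the same route as the paper: both use the product decomposition $\operatorname{N}({T_2}^{n})\cong\operatorname{N}({T_2}^{n-1})\times\operatorname{N}(T_2)$ and the exponential law to produce the currying equivalence, then check it restricts to the subcategories cut out by Construction~\ref{cons:S_n-construction}. The paper dismisses that restriction check as ``straightforward to verify,'' whereas you spell it out (including the inductive bookkeeping needed to know the exact structure on $S_2^{(n-1)}\CC$ is pointwise, without which $S_2(S_2^{(n-1)}\CC)$ is not even defined) --- a worthwhile elaboration, but not a different argument.
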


\begin{proof}
 Using the equivalence \(\operatorname{N}({T_2}^{p+q}) \simeq \operatorname{N}({T_2})^p \times \operatorname{N}({T_2})^q\) and $x\in \operatorname{N}(T_2) $, we define the functor 
\begin{equation*}
    R : \text{Ex}_{\ii}({\operatorname{N}(T_2)}^{n}, \CC) \to \text{Ex}_{\ii}(\operatorname{N}(T_2), \text{Ex}_{\ii}(\operatorname{N}(T_2)^{n-1}, \CC)),
\end{equation*}
such that \(R(\alpha) = \Bar{\alpha}\) and \(\Bar{\alpha}(x)=\alpha(x, -) \). Conversely, we define the functor
\begin{equation*}
     L : \text{Ex}_{\ii}(\operatorname{N}(T_2), \text{Ex}_{\ii}(\operatorname{N}(T_2)^{n-1}, \CC)) \to \text{Ex}_{\ii}(\operatorname{N}(T_2)^{n}, \CC),
\end{equation*}
  such that $L(\beta)=\Bar{\beta}$ and $\Bar{\beta}(x,-)= \beta(x)(-)$. It is straightforward to verify that the pair of functors \((L, R)\) induces the equivalence \(S_2^{(n)}\CC \simeq S_2(S_2^{(n-1)}\CC)\) as a result.
  
\end{proof}

\begin{corollary}\label{col:equiv-with-iteration}
 The following hold for every non-negative integer \(n\):
\begin{itemize}
    \item There is the natural equivalence $S_2^{(n)}\CC \simeq \underbrace{S_2(S_2( \dots (S_2\CC) \dots ))}_{\text{n times}} $.
    \item  \( S_2^{(n)}\CC \) is an exact \(\infty\)-category, where a morphism \(\alpha: F \to G\) is classified as a cofibration (resp. fibration) if, for every object \( x \in \operatorname{N}({T_2}^n) \), the induced map \( \alpha(x): F(x) \to G(x) \) is a cofibration (resp. fibration) in the category \(\mathcal{C}\).

\end{itemize}
\end{corollary}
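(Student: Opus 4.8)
The plan is to argue by induction on $n$, using Lemma \ref{iteration-lemma} for the one-step reduction and Proposition \ref{prop:exactness-S_2C} for preservation of exactness. The base case $n=0$ is the convention $S_2^{(0)}\CC=\CC$, which is exact by hypothesis and coincides with the empty iterate; the case $n=1$ is Proposition \ref{prop:exactness-S_2C} itself. For the inductive step, assume the statement holds for $n-1$, so that $S_2^{(n-1)}\CC$ is exact and equivalent to the $(n-1)$-fold iterate $S_2(\cdots(S_2\CC)\cdots)$. Lemma \ref{iteration-lemma} gives an equivalence $S_2^{(n)}\CC\simeq S_2(S_2^{(n-1)}\CC)$; applying $S_2(-)$ to the inductive equivalence and composing then yields $S_2^{(n)}\CC\simeq S_2(S_2(\cdots(S_2\CC)\cdots))$ with $n$ copies of $S_2$. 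Since $S_2^{(n-1)}\CC$ is exact by the inductive hypothesis, Proposition \ref{prop:exactness-S_2C} shows that $S_2(S_2^{(n-1)}\CC)$ is exact, and hence so is $S_2^{(n)}\CC$ by transporting the exact structure along the equivalence.

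It remains to identify the cofibrations and fibrations. By the exact structure on $S_2$ of an exact $\infty$-category, as established in the proof of Proposition \ref{prop:exactness-S_2C} and the remark following it, a morphism of $S_2(S_2^{(n-1)}\CC)$ is a cofibration (resp. fibration) precisely when it is a pointwise cofibration (resp. fibration) over $\operatorname{N}(T_2)$, valued in $S_2^{(n-1)}\CC$. By the inductive hypothesis, a morphism of $S_2^{(n-1)}\CC$ is a cofibration (resp. fibration) precisely when it is a pointwise cofibration (resp. fibration) over $\operatorname{N}({T_2}^{n-1})$, valued in $\CC$. Composing these two descriptions through the canonical identification $\operatorname{N}({T_2}^{n})\simeq\operatorname{N}(T_2)\times\operatorname{N}({T_2}^{n-1})$ underlying the functors $L$ and $R$ of Lemma \ref{iteration-lemma}, we conclude that a morphism $\alpha\colon F\to G$ in $S_2^{(n)}\CC$ is a cofibration (resp. fibration) if and only if $\alpha(x)\colon F(x)\to G(x)$ is a cofibration (resp. fibration) in $\CC$ for every object $x\in\operatorname{N}({T_2}^{n})$, as claimed.

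The step that will require the most care is checking that Lemma \ref{iteration-lemma} can be upgraded from an equivalence of underlying $\infty$-categories to an \emph{exact} equivalence: one must verify that $L$ and $R$ carry cofibrations to cofibrations, fibrations to fibrations, and the biCartesian squares witnessing short exact sequences to biCartesian squares, and likewise for $S_2(-)$ applied to the inductive equivalence. This follows from the explicit pointwise formulas $\Bar{\alpha}(x)=\alpha(x,-)$ and $\Bar{\beta}(x,-)=\beta(x)(-)$ together with Remark \ref{rem:pointwise-colimit-and-limit} (pointwise computation of limits and colimits in functor $\infty$-categories), but it should be spelled out rather than left implicit. A secondary, more bookkeeping-level point is the meaning of \emph{natural}: the functors $L$ and $R$ are built only from the canonical isomorphism ${T_2}^{p+q}\cong{T_2}^{p}\times{T_2}^{q}$ and composition of functors, both functorial in the exact $\infty$-category argument, so the equivalences are natural in $\CC$, and chaining them for successive values of $n$ produces a coherent family without the need to construct further homotopy-coherence data by hand.
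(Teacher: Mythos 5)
Your proposal is correct and follows essentially the same route the paper intends: the corollary is an immediate induction on $n$ combining Lemma \ref{iteration-lemma} (the one-step decomposition $S_2^{(n)}\CC \simeq S_2(S_2^{(n-1)}\CC)$) with Proposition \ref{prop:exactness-S_2C} (exactness of $S_2$ of an exact $\infty$-category), and your pointwise identification of cofibrations and fibrations through $\operatorname{N}({T_2}^{n})\simeq\operatorname{N}(T_2)\times\operatorname{N}({T_2}^{n-1})$ matches the intended exact structure. Your added remark that the equivalence of Lemma \ref{iteration-lemma} should be checked to be \emph{exact} (preserving cofibrations, fibrations, and the witnessing biCartesian squares) is a worthwhile point of care that the paper leaves implicit.
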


\subsection{Face and Degeneracy Functors}
\begin{definition}\label{def:d_k(l)}
    Let \(F\) be an object of \(S_2^{(n)}\CC\). For \(1 \leq l \leq n\) and \(0\leq k \leq 2\) we define the exact functors \(d_k(l): S_2^{(n)}\CC \rightarrow S_2^{(n-1)}\CC\) as follows:
    \begin{equation*}
      d_k(l)(F)(i_1j_1, \dots, i_{n-1}j_{n-1}) = 
\begin{cases} 
F(i_1j_1  , \dots, i_{l-1}j_{l-1}, 12 , i_lj_l \dots, i_{n-1}j_{n-1}) &  k=0 \\ \\
F(i_1j_1  , \dots, i_{l-1}j_{l-1}, 02 , i_lj_l \dots, i_{n-1}j_{n-1}) &  k = 1 \\ \\
F(i_1j_1  , \dots, i_{l-1}j_{l-1}, 01 , i_lj_l \dots, i_{n-1}j_{n-1}) &  k = 2 
\end{cases}
\end{equation*}
This definition gives us \(3n\) exact functors from the \(\infty\)-category \(S_2^{(n)}\CC\) to the \(\infty\)-category \(S_2^{(n-1)}\CC\). Each \(d_k(l)\) defined as above is called a \emph{face functor}.
\end{definition}

\begin{example}
    Using the definition \ref{def:d_k(l)}, there are three face functors from \(S_2\CC\) to \(\CC\). These functors can be represented as follows:
   \begin{equation*}
   d_k(1)(\begin{tikzcd}
F(01) \arrow[r, tail] & F(02) \arrow[r, two heads] & F(12)
\end{tikzcd})=
    \begin{cases}
       F(12) & k=0 \\
        F(02) & k=1 \\
        F(01) & k=2  \\
        \end{cases} \ \ \ \ \ \ \   (F \in S_2\CC)
    \end{equation*}
    For the case \(n=2\), we have the following six face functors from \(S_2^{(2)}\CC\) to \(S_2\CC\): \ ($F \in S_2^{(2)}\CC$)

\begin{equation*}
  d_k(l) \begin{pmatrix} \begin{tikzcd}

    {F(01,01)} \arrow[d, tail] \arrow[r, tail]      & {F(02,01)} \arrow[d, tail] \arrow[r, two heads]      & {F(12,01)} \arrow[d, tail]      \\
{F(01,02)} \arrow[d, two heads] \arrow[r, tail] & {F(02,02)} \arrow[d, two heads] \arrow[r, two heads] & {F(12,02)} \arrow[d, two heads] \\
{F(01,12)} \arrow[r, tail]                      & {F(02,12)} \arrow[r, two heads]                      & {F(12,12)} 
                    
\end{tikzcd} \end{pmatrix}
=\begin{cases}
    \begin{tikzcd}
F(12,01) \arrow[r, tail] & F(12,02) \arrow[r, two heads] & F(12,12)
\end{tikzcd} & k=0, l=1 \\ \begin{tikzcd}
F(02,01) \arrow[r, tail] & F(02,02) \arrow[r, two heads] & F(02,12)
\end{tikzcd} & k=1, l=1 \\ \begin{tikzcd}
F(01,01) \arrow[r, tail] & F(01,02) \arrow[r, two heads] & F(01,12)
\end{tikzcd} & k=2, l=1 \\ \begin{tikzcd}
F(01,12) \arrow[r, tail] & F(02,12) \arrow[r, two heads] & F(12,12)
\end{tikzcd} & k=0, l=2 \\ \begin{tikzcd}
F(01,02) \arrow[r, tail] & F(02,02) \arrow[r, two heads] & F(12,02)
\end{tikzcd} & k=1, l=2 \\ \begin{tikzcd}
F(01,01) \arrow[r, tail] & F(02,01) \arrow[r, two heads] & F(12,01)
\end{tikzcd} & k=2, l=2
\end{cases}
\end{equation*}

\end{example}

 \begin{proposition} \label{prop:composition-of-faces}
    For any given integers \(0 \leq p,k \leq 2\) and \(1\leq l < q \leq n\), the following functors are naturally equivalent in \(\text{Fun}(S_2^{(n)}\CC , S_2^{(n-2)}\CC)\):
\begin{equation*}
    d_k(l) \circ d_p(q) \simeq d_p(q-1) \circ d_k(l) .
\end{equation*}  
\end{proposition}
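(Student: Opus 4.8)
The plan is to reduce the claimed equivalence to a strict identity of restriction functors, exactly as one proves the cosimplicial identity $\delta^{q}\delta^{l}=\delta^{l}\delta^{q-1}$ for $l<q$. First I would record that every face functor is a restriction along a map of index posets. Write $a_{0}:=12$, $a_{1}:=02$, $a_{2}:=01$, so that Definition \ref{def:d_k(l)} reads $d_{k}(l)(F)(i_{1}j_{1},\dots,i_{m-1}j_{m-1})=F(i_{1}j_{1},\dots,i_{l-1}j_{l-1},a_{k},i_{l}j_{l},\dots,i_{m-1}j_{m-1})$. For $1\le l\le m$ let $\iota_{k,l}^{(m)}\colon T_{2}^{m-1}\to T_{2}^{m}$ be the functor of (ordinary) posets that inserts the constant coordinate $a_{k}$ in the $l$-th slot, $\iota_{k,l}^{(m)}(x_{1},\dots,x_{m-1})=(x_{1},\dots,x_{l-1},a_{k},x_{l},\dots,x_{m-1})$; it is visibly order preserving. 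Then $d_{k}(l)$ is exactly restriction along $\operatorname{N}(\iota_{k,l}^{(m)})$, that is, the assignment $F\mapsto F\circ\operatorname{N}(\iota_{k,l}^{(m)})$, with codomain the full subcategory $S_{2}^{(m-1)}\CC\subseteq\text{Fun}(\operatorname{N}(T_{2}^{m-1}),\CC)$, which is where it lands by the construction cited in Definition \ref{def:d_k(l)}.

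Next, since precomposition along a fixed map of simplicial sets is strictly functorial, $d_{k}(l)\circ d_{p}(q)$ is restriction along $\operatorname{N}(\iota_{p,q}^{(n)})\circ\operatorname{N}(\iota_{k,l}^{(n-1)})=\operatorname{N}\bigl(\iota_{p,q}^{(n)}\circ\iota_{k,l}^{(n-1)}\bigr)$, whereas $d_{p}(q-1)\circ d_{k}(l)$ is restriction along $\operatorname{N}\bigl(\iota_{k,l}^{(n)}\circ\iota_{p,q-1}^{(n-1)}\bigr)$. So it suffices to prove the equality of poset functors $T_{2}^{n-2}\to T_{2}^{n}$
\[
\iota_{p,q}^{(n)}\circ\iota_{k,l}^{(n-1)}\;=\;\iota_{k,l}^{(n)}\circ\iota_{p,q-1}^{(n-1)}\qquad(l<q).
\]
Evaluating the left side on $(x_{1},\dots,x_{n-2})$ inserts $a_{k}$ in slot $l$ and then $a_{p}$ in slot $q$; since $1\le l<q\le n$, one obtains $(x_{1},\dots,x_{l-1},a_{k},x_{l},\dots,x_{q-2},a_{p},x_{q-1},\dots,x_{n-2})$, the boundary case $l=q-1$ (where $a_{k}$ and $a_{p}$ become adjacent) being subsumed by this formula. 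The right side inserts $a_{p}$ in slot $q-1$ and then $a_{k}$ in slot $l\le q-1$, producing the same tuple; the two functors agree on morphisms as well, so they coincide. This is just the cosimplicial identity applied in the $l$-th and $q$-th $T_{2}$-factors of the $S_{\bullet}$-construction.

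Equal poset functors induce equal maps of nerves, hence $d_{k}(l)\circ d_{p}(q)$ and $d_{p}(q-1)\circ d_{k}(l)$ are literally the same map $S_{2}^{(n)}\CC\to S_{2}^{(n-2)}\CC$; in particular they are naturally equivalent in $\text{Fun}(S_{2}^{(n)}\CC,S_{2}^{(n-2)}\CC)$, with the identity transformation as a witness. Exactness of the two composites is automatic, each being a composite of the exact functors $d_{k}(l)$ and $d_{p}(q)$. The only step that requires care is the index bookkeeping in the middle paragraph — particularly the case $l=q-1$ — but there is no homotopy-theoretic input anywhere, since everything takes place at the level of strict precomposition along nerves of maps of posets.
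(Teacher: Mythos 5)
Your proof is correct, and it packages the argument somewhat differently from the paper. The paper's proof checks that the two composites agree on each object $F\in S_2^{(n)}\CC$ and on each vertex of $\operatorname{N}(T_2^{n-2})$, and then invokes the pointwise criterion for natural equivalences (Theorem \ref{thm:pointwise-equivalence}) to upgrade this objectwise agreement to a natural equivalence of functors. You instead observe that each $d_k(l)$ is precomposition along the nerve of an order-preserving insertion map $\iota_{k,l}^{(m)}$ of posets, reduce the claim to the strict interchange identity $\iota_{p,q}^{(n)}\circ\iota_{k,l}^{(n-1)}=\iota_{k,l}^{(n)}\circ\iota_{p,q-1}^{(n-1)}$ for $l<q$, and conclude that the two composites are literally the same functor, witnessed by the identity transformation. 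The underlying combinatorial bookkeeping is identical in both arguments, but your route is tighter in one respect: the pointwise criterion presupposes a natural transformation to test, which the paper leaves implicit, whereas your version needs no homotopy-theoretic input at all and yields equality rather than mere equivalence. The one thing to make explicit, which you gesture at, is that Definition \ref{def:d_k(l)} only prescribes $d_k(l)$ on the values of $F$ at objects of the indexing poset, so identifying $d_k(l)$ with restriction along $\operatorname{N}(\iota_{k,l}^{(m)})$ is the step that makes the definition functorial in the first place; once that identification is granted, the rest of your argument is immediate.
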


Before giving the proof of the proposition \ref{prop:composition-of-faces} , let us recall a crucial theorem that will be essential for our argument.

\begin{theorem} \label{thm:pointwise-equivalence} \cite[Theorem VII.1 and Theorem VII.8]{FabianLectNotes}
  Let \( \mathcal{C}\) and \( \mathcal{D}\) be \(\infty\)-categories. A natural transformation \( \eta: F \Rightarrow G \) of functors \( F, G: \mathcal{C} \to \mathcal{D} \) is a natural equivalence (i.e., \( \eta \) is an isomorphism in the homotopy category \( \text{hFun}(\mathcal{C}, \mathcal{D}) \)) if and only if it induces equivalences on 0-simplices, i.e., \( \eta_x: F(x) \to G(x) \) is an equivalence for all \( x \in \mathcal{C} \). 
\end{theorem}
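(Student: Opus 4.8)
The statement has two directions, and I would treat them separately. The \emph{only if} direction is formal: for each object $x\in\mathcal{C}$, evaluation at $x$ is a functor of $\infty$-categories $\operatorname{ev}_x\colon\operatorname{Fun}(\mathcal{C},\mathcal{D})\to\mathcal{D}$ (restriction along $\{x\}\hookrightarrow\mathcal{C}$), and any functor of $\infty$-categories preserves equivalences, since it induces a functor of homotopy categories and ordinary functors preserve isomorphisms. Applying $\operatorname{ev}_x$ to $\eta$ yields $\eta_x$, so each $\eta_x$ is an equivalence whenever $\eta$ is. The real content is the converse.

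For the converse I would invoke Joyal's criterion: an edge $e$ of an $\infty$-category $\mathcal{E}$ is an equivalence (equivalently, an isomorphism in $h\mathcal{E}$) if and only if $e\colon\Delta^1\to\mathcal{E}$ extends along the inclusion $\Delta^1\hookrightarrow J$ to a map $J\to\mathcal{E}$, where $J$ is the nerve of the contractible groupoid on two objects $\{0\simeq 1\}$. I apply this with $\mathcal{E}=\operatorname{Fun}(\mathcal{C},\mathcal{D})$, which is an $\infty$-category because $\mathcal{D}$ is. Under the exponential adjunction $\operatorname{Hom}(K,\mathcal{D}^{\mathcal{C}})\cong\operatorname{Hom}(\mathcal{C}\times K,\mathcal{D})$, the transformation $\eta$ corresponds to a map $\bar\eta\colon\mathcal{C}\times\Delta^1\to\mathcal{D}$, and extending $\eta$ to $J\to\operatorname{Fun}(\mathcal{C},\mathcal{D})$ is the same as extending $\bar\eta$ along $\mathcal{C}\times\Delta^1\hookrightarrow\mathcal{C}\times J$ to a map $\mathcal{C}\times J\to\mathcal{D}$. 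Thus the entire problem reduces to solving this single extension problem, knowing that for every vertex $x$ the restriction $\bar\eta|_{\{x\}\times\Delta^1}=\eta_x$ already extends over $\{x\}\times J$.

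I would build the extension by induction over the skeletal filtration $\emptyset=\operatorname{sk}_{-1}\mathcal{C}\subseteq\operatorname{sk}_0\mathcal{C}\subseteq\operatorname{sk}_1\mathcal{C}\subseteq\cdots$ of $\mathcal{C}$, setting $X_n:=(\mathcal{C}\times\Delta^1)\cup(\operatorname{sk}_n\mathcal{C}\times J)\subseteq\mathcal{C}\times J$ and extending $\bar\eta$ over each $X_n$ in turn (passing to the colimit $\mathcal{C}\times J$ at the end, which is harmless since monomorphisms are closed under transfinite composition and $\mathcal{D}$ is fibrant). The base case $X_0$ amounts to choosing, for each object $x$, an extension of $\eta_x$ over $\{x\}\times J$, which exists \emph{precisely by the pointwise hypothesis} — this is the only place the hypothesis is used. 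For the inductive step, since $\operatorname{sk}_n\mathcal{C}$ is obtained from $\operatorname{sk}_{n-1}\mathcal{C}$ by attaching nondegenerate $n$-simplices along $\coprod\partial\Delta^n\hookrightarrow\coprod\Delta^n$, the passage from $X_{n-1}$ to $X_n$ is a pushout of the coproduct, over the nondegenerate $n$-simplices of $\mathcal{C}$, of the Leibniz (pushout--product) inclusions
\begin{equation*}
 j_n\colon (\Delta^n\times\Delta^1)\cup_{\partial\Delta^n\times\Delta^1}(\partial\Delta^n\times J)\hookrightarrow \Delta^n\times J .
\end{equation*}
Hence an extension over $X_n$ exists as soon as each $j_n$ has the left lifting property against $\mathcal{D}\to\Delta^0$, i.e. as soon as $j_n$ is a trivial cofibration in the Joyal model structure.

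The crux — and the genuine content supplied by the cited results — is the combinatorial claim that $j_n$ is a Joyal trivial cofibration for every $n\ge 1$. This is delicate because the generating map $\Delta^1\hookrightarrow J$ is \emph{not} itself a categorical equivalence, so $j_n$ cannot be handled by the usual Leibniz-product calculus for trivial cofibrations; the improvement to a weak equivalence occurs only after the pushout--product with a nonempty boundary inclusion $\partial\Delta^n\hookrightarrow\Delta^n$, which already encodes the adjoined invertibility data on all vertices. Correspondingly, at $n=0$ the map $j_0$ reduces to $\Delta^1\hookrightarrow J$, which is \emph{not} a trivial cofibration, matching the fact that at the level of objects the extension is an honest choice controlled by the hypothesis. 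I would prove the claim by analyzing $J$ as a filtered union of its nondegenerate simplices and expressing $\Delta^1\hookrightarrow J$ through inner-horn fillings together with the two invertibility extensions, then checking that the resulting Leibniz products are generated by inner-anodyne maps and these invertibility fillings; a clean bookkeeping of this (via the Joyal model structure, or equivalently the characterization of equivalences through the core $\mathcal{D}^{\simeq}$) is exactly what the referenced Theorems VII.1 and VII.8 provide. Once $j_n$ is known to be a trivial cofibration, the induction runs and produces the desired extension $\mathcal{C}\times J\to\mathcal{D}$, showing $\eta$ is an equivalence.
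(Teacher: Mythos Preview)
The paper does not give its own proof of this theorem; it is stated with the citation to \cite[Theorem VII.1 and Theorem VII.8]{FabianLectNotes} and then used as a black box. So there is no in-paper argument to compare against.

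Your outline is the standard argument and is structurally correct: the reduction via Joyal's criterion to extending $\bar\eta\colon\mathcal{C}\times\Delta^1\to\mathcal{D}$ over $\mathcal{C}\times J$, the skeletal induction on $\mathcal{C}$ whose base step consumes the pointwise hypothesis, and the identification of the inductive step with the Leibniz maps $j_n=(\partial\Delta^n\hookrightarrow\Delta^n)\,\hat\times\,(\Delta^1\hookrightarrow J)$ being Joyal trivial cofibrations for $n\ge 1$ --- including the correct observation that $j_0=(\Delta^1\hookrightarrow J)$ is \emph{not} one, which is why the hypothesis is genuinely needed at the object level --- is exactly how this is proved in the cited notes and elsewhere (Joyal, Cisinski).

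The one point worth flagging is a mild circularity at the crux: you isolate ``$j_n$ is a Joyal trivial cofibration for $n\ge 1$'' as the genuine content, sketch a plausible attack via the filtration of $J$, and then say that a clean bookkeeping ``is exactly what the referenced Theorems VII.1 and VII.8 provide.'' Those are the same theorems the paper cites for the entire statement, so as written you have unpacked the architecture but deferred the hard combinatorics back to the original citation. If you want a self-contained argument, the remaining work is to actually carry out that filtration analysis --- equivalently, to prove Joyal's special-outer-horn lemma (that $\Lambda^n_0\hookrightarrow\Delta^n$ lifts against quasicategories whenever the initial edge is sent to an equivalence, for $n\ge 2$), which is the form in which this step is usually packaged. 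If your intent was only to explain the shape of the cited proof, what you have is accurate.
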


\begin{proof}[Proof of the proposition \ref{prop:composition-of-faces}]
  Using the definition \ref{def:d_k(l)} and the theorem \ref{thm:pointwise-equivalence}, it is suffices to show that for every object \(F \in S_2^{(n)}\CC \) and every object \( (i_1j_1, \dots, i_{n-2}j_{n-2}) \in \operatorname{N}(T^{n-2}_2)\), we have
    \begin{equation*}
        d_k(l) \circ d_p(q)(F)(i_1j_1, \dots, i_{n-2}j_{n-2}) \simeq d_p(q-1) \circ d_k(l)(F)(i_1j_1, \dots, i_{n-2}j_{n-2})
    \end{equation*}
    in \(\CC\). It is straightforward to verify that for every object of \(\operatorname{N}(T^{n-2}_2)\) the functors \(d_k(l) \circ d_p(q)(F) \) and \(d_p(q-1) \circ d_k(l)(F) \) represent the same object in \(\CC\). Thus, the statement holds.
\end{proof}

Now we introduce a specific type of functors called \emph{degeneracies}.

\begin{definition} \label{def:sk(l)}
    Let \(F\) be an object of \(S_2^{(n-1)}\CC\). For \(1 \leq l \leq n\) and \(0\leq k \leq 1\) we define the exact functors \(s_k(l): S_2^{(n-1)}\CC \rightarrow S_2^{(n)}\CC\) as follows:
    \begin{itemize}
        \item \(k=0\);
\begin{equation*}
        s_0(l)(F)(i_1j_1, \dots , i_nj_n)=
       \begin{cases}
            F(i_1j_1, \dots, i_{l-1}j_{l-1},i_{l+1}j_{l+1}, \dots , i_nj_n) & i_lj_l=01, 02 \\
            * & \text{otherwise} 
        \end{cases} 
    \end{equation*}
     \item \(k=1\);
    \begin{equation*}
        s_1(l)(F)(i_1j_1, \dots , i_nj_n)=
       \begin{cases}
            F(i_1j_1, \dots, i_{l-1}j_{l-1},i_{l+1}j_{l+1}, \dots , i_nj_n) & i_lj_l=02, 12 \\
            * & \text{otherwise}
        \end{cases} 
    \end{equation*}
    \end{itemize}
This definition gives us \(2n\) exact functors from the \(\infty\)-category \(S_2^{(n-1)}\CC\) to the \(\infty\)-category \(S_2^{(n)}\CC\). Each \(s_k(l)\) defined as above is called a \emph{degeneracy functor}.
    
\end{definition}

\begin{example}
    Using the definition \ref{def:sk(l)}, there are two degeneracy functors from \(\CC\) to \(S_2\CC\). These functors can be represented in the following diagrams:
    \begin{equation*}
    (s_k(1)(c)=\begin{cases} % https://tikzcd.yichuanshen.de/#N4Igdg9gJgpgziAXAbVABwnAlgFyxMJZABgBpiBdUkANwEMAbAVxiRAGMQBfU9TXfIRQBGclVqMWbTjz7Y8BIgCYx1es1aIQAKm7iYUAObwioAGYAnCAFskZEDghJRIBljCaQUCEwBGDVmoACxg6KDZIDxBqHDosBgiCQIkNNiwoAH0ZXhBLG2cYp0QVEBCwthwAdwgyqAQuCi4gA
\begin{tikzcd}
c \arrow[r, "id_c", no head, rightarrow] & c \arrow[r, two heads] & *
\end{tikzcd} & k=0 \\

% https://tikzcd.yichuanshen.de/#N4Igdg9gJgpgziAXAbVABwnAlgFyxMJZABgBpiBdUkANwEMAbAVxiRACoQBfU9TXfIRQBGclVqMWbAMbdeIDNjwEiAJjHV6zVohCyu4mFADm8IqABmAJwgBbJGRA4ISUSAZYwOkHAgeoINQAFjB0AYhgTAwM1Dh0WAxscQmB7nQARjAMAAr8ykIgntiwcpY29ohuzkjqICFhSJHRmpLeWFAA+voUXEA
\begin{tikzcd}
* \arrow[r, tail] & c \arrow[r, "id_c"] & c
\end{tikzcd} & k=1
 \end{cases} \ \ \ (c\in \text{obj}(\CC))
    \end{equation*}
   Note that both functors $s_0(1)(c)$ and $s_1(1)(c)$ are objects of \( S_2\CC \) due to the equivalences \(\operatorname{fib}(id_c)\simeq \operatorname{cofib}(id_c) \simeq *\) and \( \operatorname{cofib}(* \rightarrow c) \simeq \operatorname{fib}(c \rightarrow * ) \simeq c \) in \(\CC\).

    Similarly, For the case \(n=2\), we have the following four degeneracy functors from \(S_2\CC\) to \(S_2^{(2)}\CC\): \((F \in S_2\CC)\)
\begin{equation*}\begin{tabular}{cccc}
\(\bullet s_0(1)(F):\)
% https://tikzcd.yichuanshen.de/#N4Igdg9gJgpgziAXAbVABwnAlgFyxMJZABgBpiBdUkANwEMAbAVxiRADEAKYgRgEoQAX1LpMufIRQ9yVWoxZsuvAcNHY8BIgCYZ1es1aIQAKiEiQGdRKJkes-QqNKtK85fGappO3vmGO3C5mah6SyDo+cgZspqoWYhphZFr2foqcPEFx7olE0im+0U4ZWW4J1ig6BVGOJkKyMFAA5vBEoABmAE4QALZIZCA4EEjSg3RYDGw445OFtVhQwSBdvUgAzNRDSAAsmzNT+3P+C0srfYgAbJvDiADsexMHj0dsJ3FnI9dIOiAAFjB0RZGHAAdwg-0BCHe3XOu0GNwArNQIUDBmCUVDzB87l9EAAOZEA1Gg8FEzEdGHfXFIv5EqbosmnSmIGlbfEPSbA-bQ1aIOFs+5jR5cx4886jNlwlH00mQpm8gZsja0wEyjHy87KtlXIWc3X1QRAA
\begin{tikzcd}
F(01) \arrow[r, "id", tail] \arrow[d, two heads] & F(01) \arrow[r, two heads] \arrow[d, two heads] & * \arrow[d, two heads] \\
F(02) \arrow[r, "id", tail] \arrow[d, tail]      & F(02) \arrow[r, two heads] \arrow[d, tail]      & * \arrow[d, tail]      \\
F(12) \arrow[r, "id", tail]                      & F(12) \arrow[r, two heads]                      & *                    
\end{tikzcd}
& 
\(\bullet s_0(2)(F):\)
% https://tikzcd.yichuanshen.de/#N4Igdg9gJgpgziAXAbVABwnAlgFyxMJZABgBpiBdUkANwEMAbAVxiRADEAKYgRgEoQAX1LpMufIRQ9yVWoxZsuxAEwDho7HgJFlM6vWatEHTj1VCRIDJolEyPWQYXGl-CxvHappB-vlGTFTVLa09JZF1fOUNFU3N1KzEtcLJlR382ACp3RJsvZGk0vxjjbITQ5J1SIujnEDLZGCgAc3giUAAzACcIAFskMhAcCCRpIbosBjYcCamE7r6kAGZqYaQAFlXZ6e35nv7EADZVkcQAdi3Jnau9xcQxtcRdEAALGDooaYB3CDePhFuB02Q1OAFZqH9PsYcD9IQDLAsDhcQUgABwQ95QoawzHwzr7JDPR7g16Y76-XEgYp1LCfQFIEmPdHjK7Q3YIgmIYGPZEzVksuYcu4PU7AyHkuFU2oBWk5REDE7LDEfCWU6kyulCg4rFFHS5TNk3CiCIA
\begin{tikzcd}
F(01) \arrow[r, tail] \arrow[d, "id", two heads] & F(02) \arrow[r, two heads] \arrow[d, "id", two heads] & F(12) \arrow[d, "id", two heads] \\
F(01) \arrow[r, tail] \arrow[d, tail]            & F(02) \arrow[r, two heads] \arrow[d, tail]            & F(12) \arrow[d, tail]            \\
* \arrow[r, tail]                                & * \arrow[r, two heads]                                & *                  
\end{tikzcd}
\end{tabular}\end{equation*}
\begin{equation*}
\begin{tabular}{cccc}
\(\bullet s_1(1)(F)\):
% https://tikzcd.yichuanshen.de/#N4Igdg9gJgpgziAXAbVABwnAlgFyxMJZABgBpiBdUkANwEMAbAVxiRACoQBfU9TXfIRQBGclVqMWbAGIAKYsICU3XiAzY8BIgCYx1es1aIQchcP58NgomWHiDU45wtr+moclF39koyfna5qrqAloout4ShjIBQZahHmTa9r5szsFu1iKkyT7RxnLCgSrx7jo5Kfn+RebiMFAA5vBEoABmAE4QALZIZCA4EEii-XRYDGw4o+MuHd1IAMzUA0gALEtTExsznT2IAGxLg4gA7Otjm+fbc4jDy4i6IAAWMHRQEwDuEM+vCHmOIFg3lddmt+kcAKzUb5vYw4T7Q35Rf6AkogWa7U5gpAADihLxh-Xh+MRDDoACMYAwAAqZMIgdpYBqPHAgP5+FHApAPO6Qp74j5fYmo9FIXl3XEjc6wraqEWIUF3TGTKWS6aynZDQ6rPGvAUI4UaxB9O6LPm62FEn4G66mu4HVUXaYULhAA
\begin{tikzcd}
* \arrow[r, tail] \arrow[d, two heads] & F(01) \arrow[r, "id", two heads] \arrow[d, two heads] & F(01) \arrow[d, two heads] \\
* \arrow[r, tail] \arrow[d, tail]      & F(02) \arrow[r, "id", two heads] \arrow[d, tail]      & F(02) \arrow[d, tail]      \\
* \arrow[r, tail]                      & F(12) \arrow[r, "id"', two heads]                     & F(12)                     
\end{tikzcd}   &
\(\bullet s_1(2)(F)\):
% https://tikzcd.yichuanshen.de/#N4Igdg9gJgpgziAXAbVABwnAlgFyxMJZABgBpiBdUkANwEMAbAVxiRACoQBfU9TXfIRQBGclVqMWbTjz7Y8BIgCYx1es1aIO3XiAzzBRMsPHqpWgGIAKYsICUOuQMUjSJtZM0hrxJQ9l6-ApCyCruEhps1sJ+joEGLiSkSqaeUTb2cfrOIaIpHpGWNrEB2cHKyamF3lYx-uIwUADm8ESgAGYAThAAtkhkIDgQSKKDdFgMbDjjkwFdvUgAzNRDSAAsKzNTW3PdfYgAbCvDiADsmxPbl7sLiKOriCogABYwdFBTAO4Qr+8IN-sNoMTgBWai-D5aHDfCH-XTzfbnYFIAAc4LekMGMIxCGoDDoACMYAwAApBQxaTpYJrPHBxBFIJ4PMEvDFfH44+l7JAsh5osaXKFbArmEBYD4A9bHJBI6aCgWTEVecVc273E5AiHs2Gq-YDB7LVnvbWcyWIQ0PI4Kq6KiKilV4wnEskJIQgKk0ulcChcIA
\begin{tikzcd}
* \arrow[r, tail] \arrow[d, two heads]       & * \arrow[r, two heads] \arrow[d, two heads]      & * \arrow[d, two heads]      \\
F(01) \arrow[r, tail] \arrow[d, "id"', tail] & F(02) \arrow[r, two heads] \arrow[d, "id", tail] & F(12) \arrow[d, "id", tail] \\
F(01) \arrow[r, tail]                        & F(02) \arrow[r, two heads]                       & F(12)                      
\end{tikzcd}
\end{tabular}\end{equation*}

\end{example}
The following relationships hold between the face and degeneracy functors.

\begin{proposition} \label{prop:face-degeneracy-composition}
    Let \(1\leq l,t \leq n\) and \(n\geq 0\). Assume that \(d_k(l): S_2^{(n+1)}\CC\rightarrow S_2^{(n)}\CC  \) and \(s_m(t): S_2^{(n)}\CC \rightarrow S_2^{(n+1)}\CC \) denote two arbitrary face and degeneracy functors, respectively, for a given exact \(\infty\)-category \(\CC\). the following equivalences hold:
    \begin{equation*}
        d_k(l) \circ s_m(t) \simeq \begin{cases}
            s_m(t) \circ d_k(l-1) & l>t  \\ \\
            s_m(t-1) \circ d_k(l) & l<t 
        \end{cases} \ \  .
    \end{equation*}
\end{proposition}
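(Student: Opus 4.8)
The plan is to argue exactly as in the proof of Proposition~\ref{prop:composition-of-faces}. Each side of each asserted equivalence is an endofunctor of $S_2^{(n)}\CC$ built from the explicit functors of Definitions~\ref{def:d_k(l)} and~\ref{def:sk(l)}, so by Theorem~\ref{thm:pointwise-equivalence} it is enough to check that the two composites agree objectwise: fixing $F\in S_2^{(n)}\CC$ and an object $\mathbf{ij}=(i_1j_1,\dots,i_nj_n)\in\operatorname{N}(T_2^n)$, I would simply unwind both composites at $\mathbf{ij}$ from the defining formulas and verify that they evaluate $F$ at the same object of $\operatorname{N}(T_2^n)$, or are both the zero object. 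It is convenient to write $c_k$ for the arrow that $d_k$ inserts (so $c_0=12$, $c_1=02$, $c_2=01$) and $A_m\subseteq\{01,02,12\}$ for the pair on which $s_m$ is non-degenerate (so $A_0=\{01,02\}$ and $A_1=\{02,12\}$): then $d_k(l)$ acts by inserting $c_k$ in slot $l$, while $s_m(t)$ acts by deleting slot $t$ when its entry lies in $A_m$ and by collapsing to the zero object otherwise.

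When $l>t$, the left side is $d_k(l)(s_m(t)F)(\mathbf{ij})=s_m(t)(F)(i_1j_1,\dots,i_{l-1}j_{l-1},c_k,i_lj_l,\dots,i_nj_n)$; since $t<l$, slot $t$ of this $(n{+}1)$-tuple is still $i_tj_t$, so this equals $F$ evaluated at $\mathbf{ij}$ with $i_tj_t$ deleted and $c_k$ inserted between slots $l-1$ and $l$ (when $i_tj_t\in A_m$), or the zero object (otherwise). On the right side, $s_m(t)(d_k(l-1)F)(\mathbf{ij})$ first deletes $i_tj_t$ (or returns the zero object) and then $d_k(l-1)$ reinserts $c_k$ in slot $l-1$ of the shortened $(n{-}1)$-tuple; because the deletion happened in slot $t<l$, the original slot $l$ has become slot $l-1$, so $c_k$ lands in exactly the same position relative to its neighbors. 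Thus both sides evaluate $F$ at the identical tuple, and both collapse to the zero object under the single condition $i_tj_t\notin A_m$, which never involves $c_k$. The case $l<t$ is the mirror image: inserting $c_k$ in slot $l\le t-1$ pushes $i_{t-1}j_{t-1}$ into slot $t$, so $s_m(t)$ on the left deletes $i_{t-1}j_{t-1}$, whereas on the right $s_m(t-1)$ deletes $i_{t-1}j_{t-1}$ first and then $d_k(l)$ reinserts $c_k$ in slot $l$; once more both evaluations of $F$ coincide (the original tuple with $i_{t-1}j_{t-1}$ removed and $c_k$ inserted in position $l$), and the conditions for collapsing to the zero object match. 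In either case Theorem~\ref{thm:pointwise-equivalence} upgrades this objectwise agreement to the stated natural equivalence.

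I do not expect a genuine obstacle: the content is purely combinatorial slot-bookkeeping. The step that needs the most care is lining up the degenerate cases — one must confirm that whether a composite collapses to the zero object is decided by a single original entry of $\mathbf{ij}$ (namely $i_tj_t$ if $l>t$, and $i_{t-1}j_{t-1}$ if $l<t$) and never by the inserted arrow $c_k$, which is precisely why the two different admissibility sets $A_0$ and $A_1$ cause no interference. I would also record at the outset that the composites type-check, i.e.\ that $d_k(l-1)$ is defined on $S_2^{(n)}\CC$ when $l>t$ and that $s_m(t-1)$ is defined on $S_2^{(n-1)}\CC$ when $l<t$; both follow from $l>t\ge1$ (resp.\ $t>l\ge1$) together with the index ranges in Definitions~\ref{def:d_k(l)} and~\ref{def:sk(l)}. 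The case $l=t$ is deliberately excluded from the statement, since there $d_k(l)\circ s_m(l)$ degenerates rather than being a further face--degeneracy composite, and it is not needed in what follows.
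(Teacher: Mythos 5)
Your proposal is correct and follows essentially the same route as the paper: unwind both composites at an arbitrary object of $\operatorname{N}(T_2^n)$ using Definitions~\ref{def:d_k(l)} and~\ref{def:sk(l)}, check they evaluate $F$ at the same tuple (or both give the zero object), and invoke Theorem~\ref{thm:pointwise-equivalence}. Your write-up is in fact slightly more complete than the paper's, which only treats the case $m=0$, $l>t$ explicitly and leaves the remaining cases and the zero-object bookkeeping to the reader.
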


\begin{proof}
    We will prove the first equivalence for the case \(m=0\) and leave the other cases as exercises for the reader. Assume \(F \in S_2^{(n)}\CC \) and \( (i_1j_1, \dots , i_nj_n)\in \operatorname{N}(T^n_2)\) are arbitrary. On one hand, we have:
    \begin{equation*}
        d_k(l) \circ s_0(t)(F)(i_1j_1, \dots , i_nj_n) = s_0(t)(F)(i_1j_1  , \dots , i_{l-1}j_{l-1} , \overset{\overset{\text{l-th place} }{\downarrow}}{\bigstar} , i_lj_l , \dots, i_nj_n))
        \end{equation*}
        \begin{equation}\label{eq:face-degeneracy-composition}
        =F(i_1j_1  , \dots , i_{t-1}j_{t-1} , i_{t+1}j_{t+1}, \dots , i_{l-1}j_{l-1} , \overset{\overset{\text{(l-1)-th place}}{\downarrow}}{\bigstar} , i_lj_l , \dots, i_nj_n). \end{equation}
 
       On the other hand, we have:
       \begin{equation*}
            s_0(t)\circ d_k(l-1)(F)(i_1j_1, \dots , i_nj_n)
           \end{equation*}
           \begin{equation*}
           = d_k(l-1)(F)(i_1j_1  , \dots , i_{t-1}j_{t-1} , \overset{\overset{\text{t-th place} }{\downarrow}}{i_{t+1}j_{t+1}} , \dots, i_nj_n)\end{equation*}
           \begin{equation}\label{eq:face-degeneracy-composition(2)}
       =F(i_1j_1  , \dots , i_{t-1}j_{t-1} , i_{t+1}j_{t+1}, \dots , i_{l-1}j_{l-1} , \overset{\overset{\text{(l-1)-th place}}{\downarrow}}{\bigstar} , i_lj_l , \dots, i_nj_n). \end{equation}
  
    by comparing \eqref{eq:face-degeneracy-composition} and \eqref{eq:face-degeneracy-composition(2)} , we observe that the mentioned fucntors represent the same object in \(\CC\). Thus, the equivalence in question is held by the theorem \ref{thm:pointwise-equivalence}. ( Note that considering \(0 \leq k \leq 2\), $\bigstar$ may represent one of the objects  01, 02, or 12 in \(\operatorname{N}(T_2)\). )

\end{proof}

\begin{remark} \label{rem:table}
    
   Assuming \(l=t\) in the Proposition \ref{prop:face-degeneracy-composition}, we obtain the following table of equivalences:
    \begin{table}[ht]
    \centering
    \begin{tabular}{|c|c|c|c|c|c|c|}
        \hline
      \((m,k)\) & (0,0) & (0,1) & (0,2) & (1,0) & (1,1) & (1,2) \\
        \hline
      \(d_k(l) \circ s_m(l) \simeq\) & \(*\) & \(id\) & \(id\) & \(id\) & \(*\) & \(id\) \\
        \hline
    \end{tabular}

\end{table}
\end{remark}

\section{Mac Lanes $Q$-Construction for Exact $\ii$-Categories}\label{sec:Mac-Lane-Q-C} 
We now extend the \( Q \)-construction introduced in \cite{Main-McCarthy} to the setting of exact \( \infty \)-categories. To do so, we employ the \( S_n \)-construction, as defined in Section \ref{cons:S_n-construction}, within the context of exact \( \infty \)-categories. Throughout this section, we assume that \(\AAA\) is a stable \(\infty\)-category, and \(\CC\) is an exact \(\infty\)-category. Additionally, the equivalences involving matrices arise from the matrix calculus in the homotopy category of \(\AAA\), which is, in fact, an ordinary additive category.

\subsection{Mac Lanes $Q'$-Construction}

Let $A$ be an stable $\ii$-category and $F: \text{Exact}_{\ii} \to \mathcal{A}$ be a functor. We define the functor
\begin{equation}\label{eq:Mac-Lane-Q'-functor}
Q'(F;-): \text{Exact}_{\ii} \to \text{Ch}_{\geq 0}(\AAA)
\end{equation}
for a given exact $\ii$-category $\CC$ as follows:
\begin{equation*}
  Q'_n(F;\CC):= F(S_2^{(n)}\CC)  
\end{equation*}
\begin{equation*}
 \delta_n : Q'_{n+1}(F;\CC) \to Q'_{n}(F;\CC)  
\end{equation*}
\begin{equation}\label{eq:alternating-sum}
    \delta_n :=\sum_{i=1}^{n+1}(-1)^i (F(d_0(i)) - F(d_1(i)) + F(d_2(i))) .
\end{equation}

To establish that \( Q'(F; -) \) defines a functor, it is necessary to verify that for each \( n \), the composition
\begin{equation*}
\delta_{n-1} \circ \delta_n \in \text{Map}_{\AAA}(F(S_2^{(n+1)}(\CC)), F(S_2^{(n-1)}(\CC)))
\end{equation*}
is equivalent to the zero morphism \( * \). For illustration, applying Remark \ref{prop:composition-of-faces} with \( p, k = 0, 2 \), \( 0 < i_1 < n \), and \( 0 < i_2 < n+1 \), we consider arbitrary terms \( (-1)^{i_1} F(d_k(i_1)) \) and \( (-1)^{i_2} F(d_p(i_2)) \) from \( \delta_{n-1} \) and \( \delta_n \), respectively. The additive inverse of their composition
\begin{equation*}
(-1)^{i_1+i_2} \left( F(d_k(i_1)) \circ F(d_p(i_2)) \right)
\end{equation*}
exists within the extended formula for \( \delta_{n-1} \circ \delta_n \), and is given by the following formula:

\begin{equation*}\begin{cases}
     ((-1)^{i_2-1}F(d_p(i_2-1))) \circ ((-1)^{i_1}F(d_k(i_1))) & i_1 < i_2  \\

     \\
     ((-1)^{i_2}F(d_p(i_2))) \circ ((-1)^{i_1+1}F(d_k(i_1+1))) & i_1 \geq i_2
\end{cases}\end{equation*}

It is straightforward to verify that this holds for other possible values of \( p \) and \( k \), and we leave the details to the reader as an exercise. In conclusion, all terms in the extended formula for \( \delta_{s-1} \circ \delta_s \) cancel out, implying that it is equivalent to the corresponding zero morphism. $Q'(F;\CC)$ can be depicted as the following connective coherent chain complex in the stable $\ii$-category $\AAA$:
% https://tikzcd.yichuanshen.de/#N4Igdg9gJgpgziAXAbVABwnAlgFyxMJZABgBpiBdUkANwEMAbAVxiRADEAKAHW4Fs6OABYBjRsADCAXwCUIKaXSZc+QigCM5KrUYs2XAMoB9AEw9+g0eOky5CpdjwEiJrdXrNWiDp2DGTUgB6wJwmMlLmAsJiDJKydoogGI6qRADMbjqebLxQEDgI9knKTmrIACyZHnreXH6mQSFgALTq4ZGWMXG28onJKs4oAKxVul4+9QHBnGDtvFFWsTYJDgNlAGyj2d65+YXaMFAA5vBEoABmAE4QfEibIDgQSCMgDHQARjAMAAolqd6XLBHIQ4EDuMY5biwBg4OhGYBgADU6ikvQu11uiBejyQlVeHy+vxSgxAgOBoPB2xAuS+sPhYFRRSuN1x1BxiAy+M+Pz+JLJILBWRq1KhtLhCNajMSzMxnPZri5hN5alJQIFlOFNJhcLSaJAMqQCvZmkVPOJKv5FKF4y1dJMeoNiBN7LI1sh0Lp6kFb25RLWbEt8goUiAA
\begin{equation*}
\begin{tikzcd}  F(\mathcal{C}) & F(S_2\CC) \arrow[l, "\delta_0"'] & F(S_2^{(2)}\CC) \arrow[l, "\delta_1"'] & \dots \arrow[l, "\delta_2"'] & F(S_2^{(n)}\CC) \arrow[l, "\delta_{n-1}"'] & F(S_2^{(n+1)}\CC) \arrow[l, "\delta_{n}"'] & \dots \arrow[l, "\delta_{n+1}"']
\end{tikzcd}
    \end{equation*}

Using the complex \( Q' \) that we have just defined, we can construct the following connective chain complex in \( \AAA \). Define
\begin{equation}\label{eq:shift-Q'-functor}
    {\Sigma Q'}(F;-):\text{Exact}_{\infty} \to \text{Ch}_{\geq 0}(\AAA)
\end{equation}
with \( {\Sigma Q'}_0(F; \CC) = * \), where \( * \) denotes the zero object of \( \AAA \), and for $n \geq 1$
\begin{equation*}   
{\Sigma Q'}_{n}(F;\CC) := Q'_{n-1}(F;\CC)
\end{equation*}
\begin{equation*}
     \alpha_n : {\Sigma Q'}_{n+1}(F;\CC) \to {\Sigma Q'}_{n}(F;\CC)
\end{equation*}
   \begin{equation*}
   \alpha_n := -\delta_{n-1} 
\end{equation*}

$\Sigma Q'(F;\CC)$ can be depicted as the following diagram:
\begin{equation*}
\begin{tikzcd} *  & F(\mathcal{C}) \arrow[l, "*"'] & F(S_2\CC) \arrow[l, "-\delta_0"'] & \arrow[l, "-\delta_1"'] \dots & F(S_2^{(n)}\CC) \arrow[l, "-\delta_{n-1}"'] & F(S_2^{(n+1)}\CC) \arrow[l, "-\delta_{n}"'] & \dots \arrow[l, "-\delta_{n+1}"']
\end{tikzcd}
\end{equation*}

\subsection{Chain Maps of $Q'$-Complexes}

Define the map 
\begin{equation}
    {\hat{s}}_0 : \operatorname{N}({\text{Ch}_{\geq 0}}^\text{op}) \times \Delta^{1} \to \AAA
\end{equation}
\begin{equation*}
    \shat|_{\nch \times \{ 0 \}} := \Sigma Q'(F;\CC) \ \ ; \ \ \shat|_{\nch \times \{ 1 \} }:= Q'(F;\CC) \ \ ; \ \ \shat|_{\{\underline{n}\} \times \Delta_1} := F(s_0(1)) ,
\end{equation*}
where $s_0(1) \in \text{Map}_{\text{Exact}_{\ii}}(S_2^{(n-1)}\CC,S_2^{(n)}\CC)$, as we defined in the definition \ref{def:sk(l)}. We can depict this map as below:

% https://tikzcd.yichuanshen.de/#N4Igdg9gJgpgziAXAbVABwnAlgFyxMJZARgBoAGAXVJADcBDAGwFcYkQAxACgB0eBbejgAWAYybAAwgF8AlCGml0mXPkIoATBWp0mrdtwDKAfQ28BQsRJmz5i5djwEiAZm00GLNok5dgJjWkAPWAuDVlpc0ERcUYpOTslEAxHNSIAFnddL3Y+KAgcBHtklSd1ZABWLM99H25-U2DQsABaYgioy1j42wUklNVnFAA2ar1vXwbAkK4wDr5oqzibRIdB8oB2MZyfPIKi-tK0lHJt2pAAKj61sqJT4h0aie4Frus5a5LUoZJSB49xgYuAFOjF3r1igNbpo-o9AXU-AEmmF5hYwcsEp8ocdkG5-tlzvUkTMXKjFt0Vlijj9MvinrkePlClTvuUqnT4ZNic0yW8MRDDqyiKMOTsuY0ZmAANTtSKvdE9VZfdZELai857ZnSHQwKAAc3gRFAADMAE4QfhIUYgHAQJBVECMegAIxgjAACtT1CBTVg9cIcCAAWKWnk3Th6MZgNLiNJPmaLfaaLakJlHS63Z6hT5ff7A8HzqHGeHI9G48UE5bEGmU4g3OnXR6vexcwGgwSJkXYIwI1HWrH4+aq-Xa1oG5nmzm-W2C52wz3Iy5B4nEGPa2Rx03sz7p-mO+wuyXTMuqxva6d9z5DwvjMR207G1mVVO8yekBfa1tL5d7xmt8+d1fCshyQL9awADhoB8J23Vs93pHwrmAlcPztRAAE4oL-J9oUAmdv24OBjHILh2iVSskDPNDiAvaD-1wuD2wQ3wiJIsi32rZNqPrOicOOPD4M5QjiNIgUTRAxAHXXNNeMnASmKErhWNE8iJOIKTqLTZj517aMpUCX9HzkxjkNPGtuNnBlu10mM4ywozYN3DjiBHaix204sb0IeyYIAkykgoxBiDXaiNw86zI3SQzfIYpzTMoqjKIvcKjyXHz6P4-zxJXGiuKQTDNz4oZ5Ms3ZPN7DQOIKiDSpAHTIzvdKiu9LKQECkKEqa4ynNq4S2IhShpCAA

   \small\begin{equation*}\begin{tikzcd}
* \arrow[d, "*"'] & F(\mathcal{C}) \arrow[l, "*"'] \arrow[d, "F(s_0(1))"'] & F(S_2\CC) \arrow[l, "-\delta_0"'] \arrow[d, "F(s_0(1))"'] & F(S_2^{(2)}\CC) \arrow[l, "-\delta_1"'] \arrow[d, "F(s_0(1))"'] & \dots \arrow[l, "-\delta_2"'] & F(S_2^{(n)}\CC) \arrow[l, "-\delta_{n-1}"'] \arrow[d, "F(s_0(1))"'] & F(S_2^{(n+1)}\CC) \arrow[l, "-\delta_{n}"'] \arrow[d, "F(s_0(1))"'] & \dots \arrow[l, "-\delta_{n+1}"'] \\
F(\mathcal{C})    & F(S_2\CC) \arrow[l, "\delta_0"']             & F(S_2^{(2)}\CC) \arrow[l, "\delta_1"']                  & F(S_2^{(3)}\CC) \arrow[l, "\delta_2"']                          & \dots \arrow[l, "\delta_3"']  & F(S_2^{(n+1)}\CC) \arrow[l, "\delta_n"']                                & F(S_2^{(n+2)}\CC) \arrow[l, "\delta_{n+1}"']                      & \dots \arrow[l, "\delta_{n+2}"'] 
\end{tikzcd} 
\end{equation*}

\begin{proposition} \label{prop:s-hat0}
   ${\hat{s}}_0 : \Sigma Q'(F;\CC) \to Q'(F;\CC)$ is a chain map in $\text{Ch}_{\geq 0}(\AAA)$.
\end{proposition}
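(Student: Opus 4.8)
The plan is to verify that $\hat{s}_0$ defines a genuine $1$-simplex in $\mathrm{Fun}^*(\operatorname{N}({\text{Ch}_{\geq 0}}^\text{op}), \AAA)$, i.e.\ a natural transformation of connective coherent chain complexes. By the definition of a chain map given in the preliminaries, it suffices to check that for each $n \geq 0$ the square
\begin{equation*}
\begin{tikzcd}
{\Sigma Q'}_n(F;\CC) \arrow[d, "F(s_0(1))"'] & {\Sigma Q'}_{n+1}(F;\CC) \arrow[l, "\alpha_n"'] \arrow[d, "F(s_0(1))"] \\
Q'_n(F;\CC) & Q'_{n+1}(F;\CC) \arrow[l, "\delta_n"']
\end{tikzcd}
\end{equation*}
commutes up to homotopy in $\AAA$; unwinding the definitions, this amounts to the equivalence $F(s_0(1)) \circ (-\delta_{n-1}) \simeq \delta_n \circ F(s_0(1))$ in $\text{Map}_\AAA\bigl(F(S_2^{(n)}\CC), F(S_2^{(n)}\CC)\bigr)$ for $n \geq 1$, together with the trivial base case $n = 0$ where the top arrow $\alpha_0$ is the zero morphism $*$ and the square commutes since both composites land out of or into the zero object.

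First I would handle the base case $n=0$: here ${\Sigma Q'}_0 = *$, so $\alpha_0 = *$ and $F(s_0(1)) \circ * \simeq *$; on the other side $\delta_0 \circ F(s_0(1))$ must also be shown to be null, which follows because $\delta_0 = F(d_0(1)) - F(d_1(1)) + F(d_2(1))$ and by the table in Remark \ref{rem:table} (applied to the degeneracy $s_0(1)$) we have $d_0(1)\circ s_0(1) \simeq *$ while $d_1(1) \circ s_0(1) \simeq \mathrm{id}$ and $d_2(1)\circ s_0(1) \simeq \mathrm{id}$, so $\delta_0 \circ F(s_0(1)) \simeq * - \mathrm{id} + \mathrm{id} \simeq *$ using Lemma \ref{lem:additivity-composition} and the additive structure on mapping spaces in $\AAA$. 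For $n \geq 1$, I would expand $\delta_n = \sum_{i=1}^{n+1}(-1)^i\bigl(F(d_0(i)) - F(d_1(i)) + F(d_2(i))\bigr)$ and post-compose each summand with $F(s_0(1))$. Using Proposition \ref{prop:face-degeneracy-composition} with $t=1$: for the $i=1$ term one invokes Remark \ref{rem:table} again to see $F(d_0(1))\circ F(s_0(1)) \simeq *$ and $F(d_1(1))\circ F(s_0(1)) \simeq F(d_2(1))\circ F(s_0(1)) \simeq \mathrm{id}$, contributing (with the sign $(-1)^1 = -1$) a net of zero to $\delta_n \circ F(s_0(1))$ after cancellation; for each $i \geq 2$ (so $l = i > t = 1$) Proposition \ref{prop:face-degeneracy-composition} gives $d_k(i) \circ s_0(1) \simeq s_0(1) \circ d_k(i-1)$, so $F(d_k(i))\circ F(s_0(1)) \simeq F(s_0(1))\circ F(d_k(i-1))$. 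Reindexing the sum over $i' = i-1$ running from $1$ to $n$ and pulling $F(s_0(1))$ out on the left via Lemma \ref{lem:additivity-composition} (distributivity of composition over the $\mathbb{E}_\infty$-group addition in $\AAA$), the remaining terms assemble to $F(s_0(1)) \circ \bigl(-\sum_{i'=1}^{n}(-1)^{i'}(\dots)\bigr)$ — note the sign shift $(-1)^i = -(-1)^{i-1}$ — which is exactly $F(s_0(1)) \circ (-\delta_{n-1}) = F(s_0(1)) \circ \alpha_n$.

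The main obstacle will be bookkeeping the signs and making the cancellation of the $i=1$ term precise while staying at the level of equivalences in mapping spaces rather than equalities: one must be careful that all the manipulations (distributing $F(s_0(1))$ across a sum, replacing a summand by an equivalent one, using $\mathrm{id} - \mathrm{id} \simeq *$) are legitimate in $\text{Map}_\AAA$, which is guaranteed by Lemma \ref{lem:additivity-composition}, the remark preceding it on functoriality of $+h$, and the fact that $h\AAA$ is an ordinary additive category so the matrix/sign calculus is valid there and then lifts back to equivalences via Theorem \ref{thm:pointwise-equivalence}. I would close by remarking that, since $Q'$ and $\Sigma Q'$ are objects of the stable $\infty$-category $\text{Ch}_{\geq 0}(\AAA)$ and the homotopy-coherence data for the square at each level is supplied automatically by the functoriality of $F$ applied to the coherences among the $s_k(l)$ and $d_k(l)$ established in Propositions \ref{prop:composition-of-faces} and \ref{prop:face-degeneracy-composition}, the collection of these level-wise commuting squares indeed assembles into a morphism in $\text{Ch}_{\geq 0}(\AAA)$, completing the proof.
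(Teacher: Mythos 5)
Your proposal is correct and follows essentially the same route as the paper's proof: a levelwise verification that the squares commute up to homotopy, expanding $\delta$ termwise via Lemma \ref{lem:additivity-composition}, killing the $i=1$ term with the table of Remark \ref{rem:table}, and moving $F(s_0(1))$ past the remaining faces via Proposition \ref{prop:face-degeneracy-composition} with the reindexing and sign shift. The only differences are cosmetic — an index shift in which square is labelled by $n$, your explicit treatment of the bottom square involving ${\Sigma Q'}_0 = *$ (which the paper leaves implicit), and a harmless dropped overall sign $(-1)^1$ in your expression for $\delta_0$.
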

\begin{proof}
    We show that for every $n \geq 1$, the following square is commutative up to homotopy.
   
    \begin{equation*}
    % https://tikzcd.yichuanshen.de/#N4Igdg9gJgpgziAXAbVABwnAlgFyxMJZABgBpiBdUkANwEMAbAVxiRADEAKYAZQH0ATAF8AesE5gAtAEYAlEM4AdRQFs6OABYBjRsADCQ2bJBDS6TLnyEU08lVqMWbLr0GjxYeUtXrtug0YmZiAY2HgERGTS9vTMrIgc3PzCYhJeymqaOgz6hsam5mFWRLbR1LFOCS7J7hIA1HIKGb7ZuYFC9jBQAObwRKAAZgBOECpItiA4EEhkIAx0AEYwDAAKFuHWIENY3Ro4IOWO8SCSyrAMOHR8wGBCQYMjY4izU0gC1PNLq+vFCdu7+0OcWcnDgfGInDk+WCw1G42or0QAGYPotlmsihEEgwYANAQ5gVVQeDIe0YY8kCjJtNEO8CZUQGdlpdrmAGndUV8MZYsVsdnsTBQhEA
\begin{tikzcd}
F(S_2^{(n)}\CC) \arrow[d, "F(s_0(1))"'] & F(S_2^{(n+1)}\CC) \arrow[l, "-\delta_{n}"'] \arrow[d, "F(s_0(1))"] \\
F(S_2^{(n+1)}\CC)                           & F(S_2^{(n+2)}\CC) \arrow[l, "\delta_{n+1}"']                    
\end{tikzcd}
    \end{equation*}
  By the proposition \ref{prop:face-degeneracy-composition}, for every $2 \leq j \leq n+2$, we have the following equivalence:
    \begin{equation*}
    \bigg[(-1)^j \bigg( F(d_0(j)) - F(d_1(j)) + F(d_2(j)) \bigg) \bigg] \circ F(s_0(1)) \simeq F(s_0(1)) \circ \bigg[-(-1)^{j-1} \bigg( F(d_0(j-1)) - F(d_1(j-1)) + F(d_2(j-1)) \bigg)  \bigg] 
    \end{equation*} 
     Using the mentioned equivalence, lemma \ref{lem:additivity-composition} and the table in Remark \ref{rem:table}, we obtain the following equivalence

    \begin{equation*}
\delta_{n+1} \circ F(s_0(1)):= \bigg[ \sum_{j=1}^{n+2}(-1)^j \bigg( F(d_0(j)) - F(d_1(j)) + F(d_2(j)) \bigg) \bigg] \circ F(s_0(1))
    \end{equation*}
    \begin{equation*}
    \simeq F(d_1(1) \circ s_0(1)) - F(d_0(1) \circ s_0(1)) - F(d_2(1) \circ s_0(1)) + \sum_{j=2}^{n+2} (-1)^j \bigg( F(d_0(j) \circ s_0(1)) - F(d_1(j)\circ s_0(1)) + F(d_2(j) \circ s_0(1)) 
\bigg)    \end{equation*}
\begin{equation*}
\simeq F(Id) - F(*) - F(Id) - \bigg[ 
\sum_{j=2}^{n+2} (-1)^{j-1} \bigg( F(s_0(1) \circ d_0(j-1)) - F(s_0(1) \circ d_1(j-1)) + F(s_0(1) \circ d_2(j-1)) \bigg)
\bigg]
\end{equation*}
\begin{equation*}
\simeq F(s_0(1)) \circ \bigg[
- \sum_{j=1}^{n+1} (-1)^j \bigg( F(d_0(j)) - F(d_1(j)) + F(d_2(j)) \bigg) \bigg] := F(s_0(1)) \circ (-\delta_n) .
\end{equation*}
Therefore, the square above is commutative up to homotopy for every \( n \geq 1 \), which implies that \( {\hat{s}}_0 \) is a chain map.
\end{proof}

Similarly, we can define the the chain map ${\hat{s}_1} \in \text{Map}_{\text{ch}_{\geq 0}(\AAA)}(\Sigma Q'(F;\CC) , Q'(F;\CC))$ as follows:

\begin{equation}
   {\hat{s}}_1 : \operatorname{N}({\text{Ch}_{\geq 0}}^\text{op}) \times \Delta^{1} \to \AAA
\end{equation}
\begin{equation*}
    {\hat{s}}_1|_{\nch \times \{ 0 \}} := \Sigma Q'(F;\CC) \ \ ; \ \ {\hat{s}}_1|_{\nch \times \{ 1 \} }:= Q'(F;\CC) \ \ ; \ \ {\hat{s}}_1|_{\{\underline{n}\} \times \Delta_1} := F(s_1(1)) ,
\end{equation*}
Where $s_1(1) \in \text{Map}_{\text{Exact}_{\ii}}(S_2^{(n-1)}\CC,S_2^{(n)}\CC)$, as we defined in the definition \ref{def:sk(l)}.

Assuming that \( \AAA \) is stable, additivity ensures the existence of both finite products and coproducts in \( \AAA \), and these structures coincide. Using this fact, we now define 

\begin{equation}\label{eq:shift-Q'-functor-direct-sum}
\Sigma Q' \oplus \Sigma Q' (F;-) : \text{Exact}_{\ii} \to \text{Ch}_{\geq 0}(\AAA)  
\end{equation}
Which is another coherent chain complex in $\AAA$.
\begin{equation*}
  (\Sigma Q' \oplus \Sigma Q')_n (F;\CC) := \Sigma Q'_n(F;\CC) \oplus \Sigma Q'_n(F;\CC) 
\end{equation*}
\begin{equation*}
    \beta_n: (\Sigma Q' \oplus \Sigma Q')_n (F;\CC) \to (\Sigma Q' \oplus \Sigma Q')_{n-1} (F;\CC)
\end{equation*}
\begin{equation*}
    \beta_n := -\delta_{n-1} \oplus -\delta_{n-1} := \begin{pmatrix}
        -\delta_{n-1} & * \\ * & -\delta_{n-1}
    \end{pmatrix}
\end{equation*}
Note that $*$ above represents the unique zero morphism from $F(S_2^{(n)}\CC)$ to $F(S_2^{(n-1)}\CC)$ , and the morphism $\beta_n$ is unique up to contractible choice. We can depict this complex as follows:
% https://tikzcd.yichuanshen.de/#N4Igdg9gJgpgziAXAbVABwnAlgFyxMJZAVgBoAGAXVJADcBDAGwFcYkQAxACmAGUB9AEwBfAHrAuYAJTCuAHTkBbejgAWAYybAAwsKlSABAohoWcA9z5CxE6bIXK1mxjr1SQw0uky58hFACMFNR0TKzs3A4qGlq6hsamzOaRStHOru6e3th4BEQAzME0DCxsiJw8AiLikgC0ATLyqU6xbkZyJmYWldY1YPWNUS0ucZleIBg5fkSCRaGl7ApQEDgIWRM+uf7I5HMl4eUAVB7jk755KABse2FlIEsrayEwUADm8ESgAGYAThCKSF2IBwECQs3mBxAtSWMEYOHo-GAYGE7U6SQM0LksDhCMINEY9AARrCAAqbablH5YV6qHAnb5-AGIcEgpCFCF3BTE+GI-oBYQgfFE0nki4gKk0unrX7-Nk0VmIIIcxZybkIgKCkAE4mMMlTMUS2n0kAyplKhUAFmKt3YxyFOr15384upRuljKQZGBoMQQO1Iv1zsNdOtC3KXJgPKRAGp+R5KMIgA
\begin{equation*}
\begin{tikzcd}
* & F(\mathcal{C}) \oplus F(\mathcal{C}) \arrow[l, "*"'] & \dots \arrow[l, "\beta_1"'] & F(S_2^{(n-1)}\CC) \oplus F(S_2^{(n-1)}\CC) \arrow[l, "\beta_{n-1}"'] &  & F(S_2^{(n)}\CC) \oplus F(S_2^{(n)}\CC) \arrow[ll, "-\delta_{n-1} \oplus -\delta_{n-1}"'] & \dots \arrow[l, "\beta_{n+1}"']
\end{tikzcd}
\end{equation*}
It is straightforward to verify that, with this definition, \( \Sigma Q' \oplus \Sigma Q' \) forms a complex in \( \AAA \). In fact, the composition \( \beta_{n-1} \circ \beta_n \) is given by the following:

\begin{equation*}
    \beta_{n-1} \circ \beta_n :=(-\delta_{n-2} \oplus -\delta_{n-2}) \circ (-\delta_{n-1} \oplus -\delta_{n-1}) \simeq (-\delta_{n-2} \circ -\delta_{n-1}) \oplus (-\delta_{n-2} \circ -\delta_{n-1}) \simeq * \oplus * \simeq *  
\end{equation*}

Next, for a given exact $\ii$-category $\CC$, we introduce another chain map, defined in terms of the chain maps \( \hat{s}_0 \) and \( \hat{s}_1 \), as follows:

   \begin{equation*} ({\hat{s}}_0 , {\hat{s}}_1) : \operatorname{N}({\text{Ch}_{\geq 0}}^\text{op}) \times \Delta^{1} \to \AAA
\end{equation*}
\begin{equation*}
    ({\hat{s}}_0 , {\hat{s}}_1)|_{\nch \times \{ 0 \}} := \Sigma Q' \oplus \Sigma Q'(F;\CC) \ \ ; \ \ ({\hat{s}}_0 , {\hat{s}}_1)|_{\nch \times \{ 1 \} }:= Q'(F;\CC) \ \ ;    
    \end{equation*}
    \begin{equation*}({\hat{s}}_0 , {\hat{s}}_1)|_{\{\underline{n}\} \times \Delta_1} := \begin{pmatrix}
        F(s_0(1)) & F(s_1(1))
    \end{pmatrix}  ,
\end{equation*}
Where the map
\begin{equation*}
  \begin{pmatrix}
        F(s_0(1)) & F(s_1(1))
    \end{pmatrix}: F(S_2^{(n)}\CC) \oplus F(S_2^{(n)}\CC) \to F(S_2^{(n+1)}\CC) ,
\end{equation*}
is the unique dotted map induced by the following diagram in $\AAA$:
% https://tikzcd.yichuanshen.de/#N4Igdg9gJgpgziAXAbVABwnAlgFyxMJZARgBoAGAXVJADcBDAGwFcYkQAxACmAGUB9AEwBfAHrAuYAJTCuAHTkBbejgAWAYybAAwsKlSABAohoWcA9z5CxE6bIXK1mxjr1SQw0uky58hFOSkxNR0TKzslgIi4pIy8koqGlq6+h5eIBjYeAREgkEhDCxsiJw8UTax9glOyW5p3ll+RGSCBWHFpVbRtgDUxHEOic6uqcIhMFAA5vBEoABmAE4QikhkIDgQSADMNIz0AEYwjAAKPtn+IAtYk6o4IDSF4SXccPzkXP3unvNLK4h5602iB2oSKES4r2IH1G6UWyyQAI2SECoKeICwQnuID2hxOZyaJSuNzu3xAcL+ayRiBRjw6GOI9TJv2RNCpIMYWDAHSg9DgqgmHkowiAA
\begin{equation}
\begin{tikzcd}
     & F(S_2^{(n)}\CC) \oplus F(S_2^{(n)}\CC) \arrow[dd, dashed] &      \\
F(S_2^{(n)}\CC) \arrow[rd, "F(s_0(1))"'] \arrow[ru, "i_1"] &             & F(S_2^{(n)}\CC) \arrow[ld, "F(s_1(1))"] \arrow[lu, "i_2"'] \\      & F(S_2^{(n+1)}\CC)     &     
\end{tikzcd}
.
\end{equation}
Once again, to verify that \( (\hat{s}_0, \hat{s}_1) \) forms a connective coherent chain complex in \( \AAA \), we need to check that for each \( n \geq 1 \), the following square commutes up to homotopy:

% https://q.uiver.app/#q=WzAsNCxbMCwwLCJGKHtTXzJ9Xnsobi0xKX0oXFxtYXRoY2Fse0N9KSkgXFxvcGx1cyBGKHtTXzJ9Xnsobi0xKX0oXFxtYXRoY2Fse0N9KSkiXSxbMiwwLCJGKHtTXzJ9Xnsobil9KFxcbWF0aGNhbHtDfSkpIFxcb3BsdXMgRih7U18yfV57KG4pfShcXG1hdGhjYWx7Q30pKSJdLFswLDEsIkYoe1NfMn1eeyhuKX0oXFxtYXRoY2Fse0N9KSkiXSxbMiwxLCJGKHtTXzJ9XnsobisxKX0oXFxtYXRoY2Fse0N9KSkiXSxbMCwyLCJcXGJlZ2lue3BtYXRyaXh9ICAgICAgICAgRihzXzAoMSkpICYgRihzXzEoMSkpICAgICBcXGVuZHtwbWF0cml4fSIsMl0sWzEsMCwiLVxcZGVsdGFfbiBcXG9wbHVzIC1cXGRlbHRhX24iLDJdLFsxLDMsIlxcYmVnaW57cG1hdHJpeH0gICAgICAgICBGKHNfMCgxKSkgJiBGKHNfMSgxKSkgICAgIFxcZW5ke3BtYXRyaXh9Il0sWzMsMiwiXFxkZWx0YV97bisxfSJdXQ==
\begin{equation*}\begin{tikzcd}[ampersand replacement=\&]
	{F(S_2^{(n)}\CC) \oplus F(S_2^{(n)}\CC)} \&\& {F(S_2^{(n+1)}\CC) \oplus F(S_2^{(n+1)}\CC)} \\
	{F(S_2^{(n+1)}\CC)} \&\& {F(S_2^{(n+2)}\CC)}
	\arrow["{\begin{pmatrix}         F(s_0(1)) & F(s_1(1))     \end{pmatrix}}"', from=1-1, to=2-1]
	\arrow["{-\delta_n \oplus -\delta_n}"', from=1-3, to=1-1]
	\arrow["{\begin{pmatrix}         F(s_0(1)) & F(s_1(1))     \end{pmatrix}}", from=1-3, to=2-3]
	\arrow["{\delta_{n+1}}", from=2-3, to=2-1]
\end{tikzcd} .\end{equation*}

Using the chain complexes \({\hat{s}}_0\) and \({\hat{s}}_1\) that we have already defined, we obtain the following chain of equivalences:

\begin{equation*}
\begin{pmatrix}
    F(s_0(1)) & F(s_1(1))
\end{pmatrix} \circ (-\delta_n \oplus -\delta_n)
:= \begin{pmatrix}
    F(s_0(1)) & F(s_1(1))
\end{pmatrix} \circ
\begin{pmatrix}
    -\delta_n & * \\
    * & -\delta_n
\end{pmatrix}
\end{equation*}
\begin{equation*}
\simeq \begin{pmatrix}
    F(s_0(1)) \circ (-\delta_n) & F(s_1(1)) \circ (-\delta_n)
\end{pmatrix}
\simeq \begin{pmatrix}
    \delta_{n+1} \circ F(s_0(1)) & \delta_{n+1} \circ F(s_1(1))
\end{pmatrix}
\simeq \delta_{n+1} \circ
\begin{pmatrix}
    F(s_0(1)) & F(s_1(1))
\end{pmatrix}
\end{equation*}

This implies that the square is commutative up to homotopy for each \(n \geq 1\).

\subsection{Mac Lanes $Q$-Construction}
We are now ready to introduce the Mac Lane \( Q \)-construction within the framework of \(\infty\)-categories. To begin, we will outline the general concept and subsequently provide a detailed treatment of its construction.

\begin{proposition} \label{prop:stability-of-functor-category}
    Let \( \mathcal{A} \) be a stable \( \infty \)-category, and let \( K \) be a simplicial set. Then the \( \infty \)-category \( \text{Fun}(K, \mathcal{A}) \) is stable.
\end{proposition}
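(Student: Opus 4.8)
The plan is to verify directly the defining properties of a stable $\infty$-category for $\text{Fun}(K,\AAA)$, exploiting the fact that limits and colimits in a functor $\infty$-category are computed pointwise. Recall that an $\infty$-category $\mathcal{D}$ is stable precisely when it is pointed, admits all finite limits and finite colimits, and has the property that a commutative square in $\mathcal{D}$ is a pushout if and only if it is a pullback (see \cite[\S 1.1.3]{HigherAlgebraLurie}); this is the criterion I would check, noting that $K$ itself need not be finite since stability only concerns \emph{finite} (co)limits in the functor category.

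First I would produce a zero object. Since $\AAA$ is stable it is pointed, say with zero object $*$, and I claim the constant functor $\underline{*}\colon K \to \AAA$ is a zero object of $\text{Fun}(K,\AAA)$. Indeed $\underline{*}$ is the (pointwise) limit and colimit of the empty diagram, so by Remark \ref{rem:pointwise-colimit-and-limit} it is both terminal and initial in $\text{Fun}(K,\AAA)$, hence a zero object. Next I would address existence of finite limits and colimits: a stable $\infty$-category admits all finite limits and colimits, so $\AAA$ does, and by Remark \ref{rem:pointwise-colimit-and-limit} (equivalently \cite[Corollary 5.1.2.3]{lurie2008highertopostheory}), for every finite simplicial set $L$ the $\infty$-category $\text{Fun}(K,\AAA)$ admits $L$-indexed limits and colimits. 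Thus $\text{Fun}(K,\AAA)$ has all finite limits and colimits.

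Finally I would check the pushout--pullback condition. Let $\sigma\colon \Delta^1 \times \Delta^1 \to \text{Fun}(K,\AAA)$ be a commutative square. By Remark \ref{rem:pointwise-colimit-and-limit}, $\sigma$ is a pushout square if and only if the square $\sigma(x)$ in $\AAA$ is a pushout for every vertex $x \in K$, and likewise $\sigma$ is a pullback square if and only if $\sigma(x)$ is a pullback for every $x$. Since $\AAA$ is stable, for each $x$ the square $\sigma(x)$ is a pushout if and only if it is a pullback; combining these equivalences shows $\sigma$ is a pushout square in $\text{Fun}(K,\AAA)$ exactly when it is a pullback square. Together with the first two steps, the stability criterion is satisfied.

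I do not expect a genuine obstacle here: the entire argument is an application of the pointwise computation of (co)limits in functor $\infty$-categories, and the only point requiring a little care is to phrase each step via the \emph{detection} statement of Remark \ref{rem:pointwise-colimit-and-limit} rather than via strict identifications of objects. Alternatively, one may simply cite \cite[Proposition 1.1.3.1]{HigherAlgebraLurie}, of which this proposition is the special case $K$ a simplicial set; I would include the self-contained argument above for the reader's convenience and reference it directly in the subsequent construction of $\text{Ch}_{\geq 0}(\AAA)$.
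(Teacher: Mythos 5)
Your argument is correct and is essentially the paper's own proof: the paper simply cites the pointwise computation of (co)limits in functor $\infty$-categories and refers to \cite[Proposition 1.1.3.1]{HigherAlgebraLurie}, which is exactly the route you take, only written out in full (zero object, finite (co)limits, and the pushout--pullback criterion all checked pointwise via Remark \ref{rem:pointwise-colimit-and-limit}). No gaps; your expanded version is just a more self-contained rendering of the same one-line argument.
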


\begin{proof}
 This follows immediately from the fact that limits and colimits are computed pointwise in functor $\ii$-categories. For details see \cite[Propositon 1.1.3.1]{HigherAlgebraLurie}.
\end{proof}

\begin{lemma} \label{lem:stable-full-subcategory}
   If \( \mathcal{A} \) is a stable \( \infty \)-category, and \( \mathcal{A}' \) is a full subcategory containing a zero object and stable under the formation of fibers and cofibers, then \( \mathcal{A}' \) is itself stable.
\end{lemma}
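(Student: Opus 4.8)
The plan is to verify directly the three defining conditions of a stable $\infty$-category for $\mathcal{A}'$ (see \cite{HigherAlgebraLurie}): that it is pointed, that every morphism of $\mathcal{A}'$ admits a fiber and a cofiber, and that a triangle in $\mathcal{A}'$ is a fiber sequence if and only if it is a cofiber sequence. The recurring device throughout will be the elementary observation that, for a full subcategory $\mathcal{A}' \subseteq \mathcal{A}$, a colimit diagram $p \colon K^{\triangleright} \to \mathcal{A}$ all of whose values lie in $\mathcal{A}'$ is automatically a colimit diagram in $\mathcal{A}'$: indeed $\operatorname{Map}_{\mathcal{A}'}(-,Z) = \operatorname{Map}_{\mathcal{A}}(-,Z)$ for every $Z \in \mathcal{A}'$, so the universal property transfers verbatim; dually for limit diagrams. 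Since $\mathcal{A}'$ is full and contains the zero object $*$ of $\mathcal{A}$, it is pointed with the same zero object, which settles the first condition.

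Next I would establish that $\mathcal{A}'$ admits fibers and cofibers, computed as in $\mathcal{A}$. Given $f \colon X \to Y$ in $\mathcal{A}'$, form the cofiber square in the stable category $\mathcal{A}$,
\[
\begin{tikzcd}
X \arrow[r, "f"] \arrow[d] & Y \arrow[d] \\
{*} \arrow[r] & \operatorname{cofib}(f).
\end{tikzcd}
\]
Closure of $\mathcal{A}'$ under cofibers gives $\operatorname{cofib}(f) \in \mathcal{A}'$, so every corner of the square lies in $\mathcal{A}'$, and the observation above makes it a pushout square in $\mathcal{A}'$ as well; hence $f$ has a cofiber in $\mathcal{A}'$, and it agrees with the one in $\mathcal{A}$. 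The argument for fibers is dual.

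For the last condition, I would take a triangle $\sigma$ in $\mathcal{A}'$, displayed as a square with vertices $X, Y, *, Z$ whose lower-left corner is $*$ and whose top edge is a morphism $f \colon X \to Y$. If $\sigma$ is a cofiber sequence in $\mathcal{A}'$, its pushout corner $Z$ is a cofiber of $f$ in $\mathcal{A}'$, hence — by the previous step and the uniqueness of cofibers — identifies with $\operatorname{cofib}(f)$ computed in $\mathcal{A}$, so $\sigma$ is equivalent to the cofiber square of $f$ in $\mathcal{A}$ and is in particular a pushout square in $\mathcal{A}$. Since $\mathcal{A}$ is stable, $\sigma$ is then also a pullback square in $\mathcal{A}$; as all four of its objects lie in $\mathcal{A}'$, the dual form of the observation shows $\sigma$ is a pullback in $\mathcal{A}'$, i.e. a fiber sequence there. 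The converse is the exact dual, so all three axioms hold and $\mathcal{A}'$ is stable.

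The only real difficulty here is bookkeeping: at each step one must be scrupulous about which of $\mathcal{A}$ or $\mathcal{A}'$ a given limit or colimit statement refers to, and invoke exactly the available hypotheses — fullness together with closure under fibers and cofibers — to pass between them. The single step that genuinely needs care is the identification of the pushout corner of a triangle in $\mathcal{A}'$ with the cofiber taken in $\mathcal{A}$; this is what allows the equivalence between fiber and cofiber sequences to be inherited from $\mathcal{A}$.
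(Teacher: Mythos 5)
Your argument is correct. The paper itself gives no proof here, deferring entirely to Lurie's \emph{Higher Algebra} (Definition 1.1.3.2 and Lemma 1.1.3.3), and your write-up is essentially the standard argument behind that cited lemma: fullness lets (co)limit cones whose vertices all lie in $\mathcal{A}'$ be recognized as (co)limit cones in $\mathcal{A}'$, closure under fibers and cofibers supplies those cones, and the equivalence of fiber and cofiber sequences is then inherited from $\mathcal{A}$ -- including the one delicate identification, which you correctly flag, of the pushout corner of an $\mathcal{A}'$-cofiber sequence with the cofiber computed in $\mathcal{A}$.
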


\begin{proof}
    see \cite[Definition 1.1.3.2 and Lemma 1.1.3.3] {HigherAlgebraLurie} .
\end{proof}

\begin{corollary}
    If $\AAA$ is a stable $\ii$-category, then so is $\text{Ch}_{\geq 0}(\AAA)$.
\end{corollary}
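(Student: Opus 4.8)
The plan is to realize $\text{Ch}_{\geq 0}(\AAA)$ as a full subcategory of a stable functor $\infty$-category and then invoke Lemma \ref{lem:stable-full-subcategory}. First I would apply Proposition \ref{prop:stability-of-functor-category} with $K = \nch$ to conclude that $\mathcal{B} := \text{Fun}(\nch, \AAA)$ is a stable $\infty$-category. Since $\text{Ch}_{\geq 0}(\AAA) = \text{Fun}^*(\nch, \AAA)$ is, by definition, the full subcategory of $\mathcal{B}$ spanned by those functors carrying the (self-dual) zero object of $\nch$ to a zero object of $\AAA$, it remains only to check that this full subcategory meets the hypotheses of Lemma \ref{lem:stable-full-subcategory}: that it contains a zero object and is closed under the formation of fibers and cofibers taken in $\mathcal{B}$.

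For the zero object, I would take the constant functor $\underline{0} : \nch \to \AAA$ at a zero object $0 \in \AAA$. It is visibly pointed, and it is simultaneously initial and final in $\mathcal{B}$: for any $F \in \mathcal{B}$ the spaces $\text{Map}_{\mathcal{B}}(\underline{0}, F)$ and $\text{Map}_{\mathcal{B}}(F, \underline{0})$ are computed from the pointwise mapping spaces $\text{Map}_{\AAA}(0, F(-))$ and $\text{Map}_{\AAA}(F(-), 0)$, each of which is contractible since $0$ is a zero object of $\AAA$. Hence $\underline{0}$ is a zero object of $\mathcal{B}$ that lies in $\text{Ch}_{\geq 0}(\AAA)$.

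For closure under fibers and cofibers, the key point is that limits and colimits in $\mathcal{B} = \text{Fun}(\nch, \AAA)$ are computed pointwise (Remark \ref{rem:pointwise-colimit-and-limit}). So given a morphism $\varphi : F \to G$ in $\text{Ch}_{\geq 0}(\AAA)$, its fiber (resp.\ cofiber) $H$ formed in $\mathcal{B}$ satisfies $H(x) \simeq \text{fib}(\varphi(x))$ (resp.\ $H(x) \simeq \text{cofib}(\varphi(x))$) for every $x \in \nch$. Evaluating at the zero object $z$ of $\nch$, we have $F(z) \simeq 0 \simeq G(z)$ because $F$ and $G$ are pointed, whence $H(z) \simeq \text{fib}(0 \to 0) \simeq 0$ (resp.\ $\text{cofib}(0 \to 0) \simeq 0$). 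Thus $H$ is again pointed, i.e.\ $H \in \text{Ch}_{\geq 0}(\AAA)$. Therefore $\text{Ch}_{\geq 0}(\AAA)$ is a full subcategory of the stable $\infty$-category $\mathcal{B}$ that contains a zero object and is stable under fibers and cofibers, and Lemma \ref{lem:stable-full-subcategory} yields that $\text{Ch}_{\geq 0}(\AAA)$ is stable.

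I do not expect a serious obstacle. The only points needing a little care are the identification of $\text{Fun}^*(\nch, \AAA)$ with an honest full subcategory of $\text{Fun}(\nch, \AAA)$ — being pointed is a property rather than extra data, since the source already possesses a zero object — and the observation that the pointwise-computed fiber and cofiber of pointed functors remain pointed, which reduces to the trivial identities $\text{fib}(0 \to 0) \simeq \text{cofib}(0 \to 0) \simeq 0$ valid in any stable $\infty$-category.
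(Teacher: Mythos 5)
Your proposal is correct and follows essentially the same route as the paper: stability of $\text{Fun}(\nch, \AAA)$ via Proposition \ref{prop:stability-of-functor-category}, closure of the full subcategory of pointed functors under fibers and cofibers via pointwise computation (Remark \ref{rem:pointwise-colimit-and-limit}), and then Lemma \ref{lem:stable-full-subcategory}. Your write-up is in fact more explicit than the paper's, spelling out the zero object and the reduction to $\operatorname{fib}(0 \to 0) \simeq \operatorname{cofib}(0 \to 0) \simeq 0$, which the paper leaves implicit.
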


\begin{proof}
    By Proposition \ref{prop:stability-of-functor-category}, the \( \infty \)-category \( \mathrm{Fun}(\mathrm{N}(\mathrm{Ch}_{\geq 0}^{\mathrm{op}}), \mathcal{A}) \) is stable. Moreover, by definition, \( \mathrm{Ch}_{\geq 0}(\mathcal{A}) \) is the full subcategory consisting of the pointed functors. Considering the stability of \( \mathrm{Fun}(\mathrm{N}(\mathrm{Ch}_{\geq 0}^{\mathrm{op}}), \mathcal{A}) \) and Remark \ref{rem:pointwise-colimit-and-limit}, it follows that the subcategory \( \mathrm{Ch}_{\geq 0}(\mathcal{A}) \) is stable under the formation of fibers and cofibers. As a result, by Lemma \ref{lem:stable-full-subcategory}, the \( \infty \)-category \( \mathrm{Ch}_{\geq 0}(\mathcal{A}) \) is stable.
\end{proof}

\begin{definition}[Mac Lanes $Q$-Construction for Exact $\ii$-Categories]
  For $F$, a pointed functor from the $\ii$-category $\text{Exact}_{\ii}$ to a given stable $\ii$-category $\AAA$, we let $Q(F,-)$ be the funcor from $\text{Exact}_{\ii}$ to $\AAA$ defined by:
  \begin{equation} \label{eq:Q-construction}
       Q(F;C):= \operatorname{cofib}[\Sigma Q' \oplus \Sigma Q'(F;\CC) \xrightarrow{({\hat{s}}_0 , {\hat{s}}_1)} Q'(F;C)]  .
   \end{equation}
   Note that by thinking of $\Sigma Q' \oplus \Sigma Q' (F;\CC)$ and $Q'(F;\CC)$ as the objects of the stable $\ii$-category \( \mathrm{Ch}_{\geq 0}(\mathcal{A}) \),  such a cofiber exists .
\end{definition}

For each \( n \geq 0 \), the \( n \)-th component and differential of the connective coherent chain complex \( Q(F; \mathcal{C}) \) are given by the following:

\begin{equation} \label{eq:n-th-term-of-Q-complex}
    Q_n(F;\CC) := \operatorname{cofib} \big( ({\hat{s}}_0 , {\hat{s}}_1)_n \big) \simeq F(S_2^{(n)}\CC) \oplus (F(S_2^{(n-2)}\CC)\oplus F(S_2^{(n-2)}\CC))
    \end{equation}
    \begin{equation}\label{eq:n-th-differential-of-Q-complex}
\gamma_n : Q_{n+1}(F;\CC) \to Q_{n}(F;\CC) \ \ \ ; \ \ \ \gamma_n= \begin{pmatrix}
        \delta_{n+1} & ({\hat{s}}_0 , {\hat{s}}_1)_n \\ * & \delta_{n-2} \oplus \delta_{n-2}
    \end{pmatrix}
\end{equation}
We can represent a segment of the whole picture in the diagram below:

% https://q.uiver.app/#q=WzAsMTIsWzMsMCwiRih7U18yfV57KG4pfShcXG1hdGhjYWx7Q30pKSBcXG9wbHVzIEYoe1NfMn1eeyhuKX0oXFxtYXRoY2Fse0N9KSkiXSxbMSwwLCJGKHtTXzJ9Xnsobi0xKX0oXFxtYXRoY2Fse0N9KSkgXFxvcGx1cyBGKHtTXzJ9Xnsobi0xKX0oXFxtYXRoY2Fse0N9KSkiXSxbMywxLCJGKHtTXzJ9XnsobisxKX0oXFxtYXRoY2Fse0N9KSkiXSxbMSwxLCJGKHtTXzJ9Xnsobil9KFxcbWF0aGNhbHtDfSkpIl0sWzMsMiwiUV9uKEY7XFxtYXRoY2Fse0N9KTo9IFxcdGV4dHtjb2ZpYn0oKHtcXGhhdHtzfX1fMCAsIHtcXGhhdHtzfX1fMSkpIl0sWzEsMiwiUV97bi0xfShGO1xcbWF0aGNhbHtDfSk6PSBcXHRleHR7Y29maWJ9KCh7XFxoYXR7c319XzAgLCB7XFxoYXR7c319XzEpKSJdLFs0LDAsIlxcZG90cyJdLFs0LDEsIlxcZG90cyJdLFs0LDIsIlxcZG90cyJdLFswLDIsIlxcZG90cyJdLFswLDEsIlxcZG90cyJdLFswLDAsIlxcZG90cyJdLFsyLDMsIlxcZGVsdGFfe259IiwyXSxbNCw1LCJcXGdhbW1hX24iLDJdLFswLDEsIi1cXGRlbHRhX3tuLTF9IFxcb3BsdXMgLVxcZGVsdGFfe24tMX0iLDJdLFswLDIsIih7XFxoYXR7c319XzAgLCB7XFxoYXR7c319XzEpIl0sWzEsMywiKHtcXGhhdHtzfX1fMCAsIHtcXGhhdHtzfX1fMSkiLDJdLFsyLDQsImlfMSJdLFszLDUsImlfMSIsMl0sWzEsMTFdLFszLDEwXSxbNSw5XSxbNiwwXSxbNywyXSxbOCw0XSxbNiw3LCJcXGRvdHMiLDEseyJzdHlsZSI6eyJib2R5Ijp7Im5hbWUiOiJub25lIn0sImhlYWQiOnsibmFtZSI6Im5vbmUifX19XSxbNyw4LCJcXGRvdHMiLDEseyJzdHlsZSI6eyJib2R5Ijp7Im5hbWUiOiJub25lIn0sImhlYWQiOnsibmFtZSI6Im5vbmUifX19XSxbMTEsMTAsIlxcZG90cyIsMSx7InN0eWxlIjp7ImJvZHkiOnsibmFtZSI6Im5vbmUifSwiaGVhZCI6eyJuYW1lIjoibm9uZSJ9fX1dLFsxMCw5LCJcXGRvdHMiLDEseyJzdHlsZSI6eyJib2R5Ijp7Im5hbWUiOiJub25lIn0sImhlYWQiOnsibmFtZSI6Im5vbmUifX19XV0=
\[\begin{tikzcd}[ampersand replacement=\&]
	\dots \& {F({S_2}^{(n-1)}(\mathcal{C})) \oplus F({S_2}^{(n-1)}(\mathcal{C}))} \&\& {F({S_2}^{(n)}(\mathcal{C})) \oplus F({S_2}^{(n)}(\mathcal{C}))} \& \dots \\
	\dots \& {F({S_2}^{(n)}(\mathcal{C}))} \&\& {F({S_2}^{(n+1)}(\mathcal{C}))} \& \dots \\
	\dots \& {Q_{n-1}(F;\mathcal{C}):= \text{cofib}(({\hat{s}}_0 , {\hat{s}}_1))} \&\& {Q_n(F;\mathcal{C}):= \text{cofib}(({\hat{s}}_0 , {\hat{s}}_1))} \& \dots
	\arrow["\dots"{description}, draw=none, from=1-1, to=2-1]
	\arrow[from=1-2, to=1-1]
	\arrow["{({\hat{s}}_0 , {\hat{s}}_1)}"', from=1-2, to=2-2]
	\arrow["{-\delta_{n-1} \oplus -\delta_{n-1}}"', from=1-4, to=1-2]
	\arrow["{({\hat{s}}_0 , {\hat{s}}_1)}", from=1-4, to=2-4]
	\arrow[from=1-5, to=1-4]
	\arrow["\dots"{description}, draw=none, from=1-5, to=2-5]
	\arrow["\dots"{description}, draw=none, from=2-1, to=3-1]
	\arrow[from=2-2, to=2-1]
	\arrow["{i_1}"', from=2-2, to=3-2]
	\arrow["{\delta_{n}}"', from=2-4, to=2-2]
	\arrow["{i_1}", from=2-4, to=3-4]
	\arrow[from=2-5, to=2-4]
	\arrow["\dots"{description}, draw=none, from=2-5, to=3-5]
	\arrow[from=3-2, to=3-1]
	\arrow["{\gamma_n}"', from=3-4, to=3-2]
	\arrow[from=3-5, to=3-4]
\end{tikzcd}\]

\section{Examples} \label{sec:examples}

In this section, we provide examples of functors to which we can apply the $Q$-construction. Since these functors involve the $\infty$-category $\text{Sp}$ of spectra, we first recall some key definitions and facts about spectra.

    Let \( \mathcal{S}_* \) be the \( \infty \)-category of pointed spaces. For an object \( X \in \mathcal{S}_* \), we denote by \( \Sigma X \) the cofiber of the map \( X \to * \), and dually, we denote by \( \Omega X \) the fiber of the map \( * \to X \). These constructions give rise to two functors, \( \Omega: \mathcal{S}_* \to \mathcal{S}_* \) and \( \Sigma: \mathcal{S}_* \to \mathcal{S}_* \), which are adjoint functors in \( \mathcal{S}_* \). Using these functors, for every pointed space \( X \), there exist a map from $\Omega^m\Sigma^m X \to \Omega^{m+1}\Sigma^{m+1} X$ induced by the following diagram:
\begin{equation*}
    % https://q.uiver.app/#q=WzAsNSxbMCwwLCJcXE9tZWdhXm1cXFNpZ21hXm0gWCJdLFsxLDEsIlxcT21lZ2Fee20rMX1cXFNpZ21hXnttKzF9IFgiXSxbMiwxLCIqIl0sWzEsMiwiKiJdLFsyLDIsIlxcT21lZ2FebVxcU2lnbWFee20rMX0gWCJdLFswLDIsIiIsMCx7ImN1cnZlIjotMX1dLFswLDEsIiIsMCx7InN0eWxlIjp7ImJvZHkiOnsibmFtZSI6ImRhc2hlZCJ9fX1dLFswLDMsIiIsMCx7ImN1cnZlIjoxfV0sWzEsM10sWzEsMl0sWzIsNF0sWzMsNF0sWzEsNCwiIiwwLHsic3R5bGUiOnsibmFtZSI6ImNvcm5lciJ9fV1d
\begin{tikzcd}[ampersand replacement=\&]
	{\Omega^m\Sigma^m X} \\
	\& {\Omega^{m+1}\Sigma^{m+1} X} \& {*} \\
	\& {*} \& {\Omega^m\Sigma^{m+1} X}
	\arrow[dashed, from=1-1, to=2-2]
	\arrow[from=1-1, to=2-3 ]
	\arrow[from=1-1, to=3-2 ]
	\arrow[from=2-2, to=2-3]
	\arrow[from=2-2, to=3-2]
	\arrow["\lrcorner"{anchor=center, pos=0.125}, draw=none, from=2-2, to=3-3]
	\arrow[from=2-3, to=3-3]
	\arrow[from=3-2, to=3-3]
\end{tikzcd}
\end{equation*}
    Therefor, the following natural sequence of objects in \( \mathcal{S}_* \) arises:
    \begin{equation}\label{eq:loop-suspension-sequence}
    X \to \Omega\Sigma X \to \Omega^2\Sigma^2 X \to \dots .
    \end{equation}
We denote the colimit of this sequence by $\zeta X$; i.e. $\operatorname{colim}_{n \in \mathbb{N}} \Omega^n\Sigma^n X := \zeta X $ .

\begin{remark} 
     The pair \( (\Sigma^{\infty}, \Omega^{\infty}) \) is a pair of adjoint functors as follows:
    \begin{equation*}
\Sigma^{\infty} \dashv \Omega^{\infty} : \mathcal{\text{Sp}} \underset{\Sigma^{\infty} }{\overset{\Omega^{\infty}}{\rightleftharpoons}} \mathcal{\mathcal{S}_*}
\end{equation*}
\begin{equation*}
(\Sigma^{\infty})_n X := \zeta \Sigma^n X \ \ ; \ \  \Omega^{\ii}M := M_0 . 
\end{equation*}
In other words, for every pointed space $X$ and every spectrum $M$, we have the following categorical equivalence of mapping spaces:
\begin{equation*}
   \text{Map}_{\text{Sp}}(\Sigma^{\infty} X , M) \simeq \text{Map}_{\mathcal{S}_{*}}(X , \Omega^{\ii}M )
\end{equation*}

\end{remark}

%\begin{remark}
   % In the $\ii$-category $\text{Sp}$, the tensor product of given spectra $M$ and $N$ is defined by their \emph{smash product} as follows:  (see \cite[{}iii.4]{adams1974stable} for details) 
% \begin{equation*}
     %M \wedge_{\text{Sp}} N := \underset{i,j \in
    % \mathbb{N}}{\operatorname{colim}} 
%\  \Omega^{i+j} \Sigma^{\ii} (M_i \wedge_{\mathcal{S}_{*}} N_j)
% \end{equation*}
% Here, the notation $\wedge_{\mathcal{S}_{*}}$ denotes the classical smash product of pointed spaces (see \cite[Remark 4.8.2.14]{HigherAlgebraLurie}). 
%\end{remark} 

%\begin{remark} \label{wedge,sphere}
%    Functor $\Sigma^{\ii} : \mathcal{S}_{*} \to \text{Sp} $ is a symmetric monoidal functor, i.e. for every pointed spaces $X$ and $Y$ we have the following equivalence:
    %\begin{equation*}
        %\Sigma^{\ii}(X \wedge_{\mathcal{S}_{*}} Y)\simeq \Sigma^{\ii}X \wedge_{\text{Sp}} \Sigma^{\ii}Y .
  %  \end{equation*}
   
%\end{remark}
\begin{remark}
    The $\infty$-category $\text{Sp}$, is an additive $\infty$-category, and its additive structure is inherited from its homotopy category, known as the stable homotopy category. The biproduct in the homotopy category $\operatorname{h}\text{Sp}$ is given by the wedge sum. More specifically, for two spectra $M$ and $N$, we can form the spectrum $M \vee_{\text{Sp}} N$, which comes with the canonical structure maps, as follows:

    \begin{equation*}
        (M \vee_{\text{Sp}} N)_{k}= M_k \vee_{\mathcal{S}_{*}} N_k  \ \ .
    \end{equation*}
\end{remark}

    \subsection{Suspension Spectrum Functor}
Using the suspension spectrum we mentioned above, we now are able to define the functor $F$ as follows:
\begin{equation} \label{eq:suspension-spectrum-functor}
    F : \text{Exact}_{\ii} \to \text{Sp} \ \ ; \ \ F(\CC):= {\Sigma}^{\ii} \CC^{\simeq} , 
\end{equation}
Where $\CC^{\simeq}$ denotes the core (i.e. the largest Kan complex contained in $\CC$) of $\CC$. Then, by the formula \eqref{eq:n-th-term-of-Q-complex} and additivite structure of $\text{Sp}$, $Q_n(F;\CC)$ is given by the following:
\begin{align*}
    Q_n(F;\CC) &:=  \Sigma^{\infty} (S_2^{(n)}\CC)^\simeq \vee _{\text{Sp}} \Sigma^{\infty}(S_2^{(n-2)}\CC)^\simeq \vee _{\text{Sp}} \Sigma^{\infty}(S_2^{(n-2)}\CC)^\simeq \\
     &\simeq \Sigma^{\infty} [ (S_2^{(n)}\CC)^\simeq \vee _{\mathcal{S}_{*}}(S_2^{(n-2)}\CC)^\simeq \vee _{\mathcal{S}_{*}} (S_2^{(n-2)}\CC)^\simeq ]
\end{align*}
The equivalence above follows from the fact that left adjoints preserve coproducts.

Given an arbitrary face functor \( d_k(l): S_2^{(n)} \mathcal{C} \to S_2^{(n-1)} \mathcal{C} \), we obtain the following map of spectra:
\begin{equation*}
    F(d_k(l)):\Sigma^{\infty} (S_2^{(n)}\CC)^\simeq \to \Sigma^{\infty} (S_2^{(n-1)}\CC)^\simeq  \ \ \ , \ \ \ 
\end{equation*}
Noting that functors between \(\infty\)-categories preserve the cores of both the source and target $\infty$-categories the $m$-th map of the above map is defined as the induced dotted functor between the following colimits, where the horizontal maps are given by \eqref{eq:loop-suspension-sequence}:
% https://tikzcd.yichuanshen.de/#N4Igdg9gJgpgziAXAbVABwnAlgFyxMJZAJgBoAGAXVJADcBDAGwFcYkQAdDgLxh3q4BlLAHMAtvQB6wMQF8AFIID6xafLABKWVwk4AFgGMmwAMKyN0rtjEwAjrJCzS6TLnyEU5UgEZqdJqzsQqIS0nKKKmqa2hy6hsZmFsBWWDb2js4gGNh4BETePn4MLGyInBwA8jYiAhzC4lIyANTeCsqqwOpaOvT6Roym5pYc1nYOTi457kQAzIU0xYFlXFUwNdJYMfWhzZsRHV0xcf2DSSlp45nZbnkoACzz-iVBHFAQOAgTWa65HiSPi1K5TeHwykxufy8xCKASBwQaYTakU6YAAtN5urFevEBolhqN0l9rr98qRoQtYS9VjV4TsxC0kQc0RijtiTnjkiNUmMwd8prd-uSnktge9PlcftMUHMhYCqdVGptaY16Xt2lF0ZjjgkhpyCZdwST7mSYc9lq8xbziVL-jNTSKuLx+MrEfsNSyen0dWcuRdHH4YFARPAiKAAGYAJwgYiQADYaDgIEhvK1MpHo0gAOwJpOIFNfdMxxAAThzydT4ajRe8XhAifLBarSAeddz5EbGcQc1bSHbaabiDIPcQfcrnYKw9HIELybLg47RaH9cQAFYF0gV3OZuuu3O7jv48PMzvs8OABw7s9z4s70vDms72vLlM0RhYMBAkE4QMgGgAIxgMAoCQGZazlMoADF5CgJQAGt5EYDQNF-EBGHoADGAABUlW4QAjUQ9BwXkZzzOdD3A8ptikMQYPgxCULQjDsP5Dw8IIoidyXXNTwolYFRdZpWlohDkNfdCYCwnDWPwkRCOIgdu2XO9eMqfi6hCFUGWE+jZEoWQgA
\begin{equation*}
\begin{tikzcd}   &     & \zeta\Sigma^{m}(S_2^{(n)}\mathcal{C})^{\simeq} \arrow[ddd, "F(d_k(l))"', dotted, bend left] & & \\
\Sigma^{m}(S_2^{(n)}\mathcal{C})^{\simeq} \arrow[rru] \arrow[r] \arrow[d, "\Sigma^m d_k(l)"'] & \Omega\Sigma^{m+1}(S_2^{(n)}\mathcal{C})^{\simeq} \arrow[ru] \arrow[r] \arrow[d, "\Omega\Sigma^{m+1}d_k(l)"'] & \dots \arrow[r]                                                                             & \Omega^{i}\Sigma^{m+i}(S_2^{(n)}\mathcal{C})^{\simeq} \arrow[lu] \arrow[r] \arrow[d, "\Omega^{i}\Sigma^{m+i}d_k(l)" '] & \dots \arrow[llu] \\
\Sigma^{m}(S_2^{(n-1)}\mathcal{C})^{\simeq} \arrow[rrd] \arrow[r]                            & \Omega\Sigma^{m+1}(S_2^{(n-1)}\mathcal{C})^{\simeq} \arrow[rd] \arrow[r]                                      & \dots \arrow[r]                                                                             & \Omega^{i}\Sigma^{m+i}(S_2^{(n-1)}\mathcal{C})^{\simeq} \arrow[ld] \arrow[r]                                     & \dots \arrow[lld] \\ &     & \zeta\Sigma^{m}(S_2^{(n-1)}\mathcal{C})^{\simeq}    & &  
\end{tikzcd}
\end{equation*}
The $E_{\ii}$-monoidal structure of mapping spaces in $\text{Sp}$ allows us to define the map $\delta_n : \Sigma^{\ii}(S_2^{(n-1)}\CC)^\simeq \to \Sigma^{\ii}(S_2^{(n)}\CC)^\simeq$ for every $n$. Similarly, way we can compute $F(s_0(1)),F(s_1(1)) : \Sigma^{\ii}(S_2^{(n-1)}\CC)^\simeq \to \Sigma^{\ii}(S_2^{(n)}\CC)^\simeq$, which lead to the construction of the map 
\begin{equation}
    ({\hat{s}}_0 , {\hat{s}}_1)_n : \Sigma^{\ii}(S_2^{(n-1)}\CC)^\simeq \vee_{\text{Sp}} \Sigma^{\ii}(S_2^{(n-1)}\CC)^\simeq \to \Sigma^{\ii}(S_2^{(n)}\CC)^\simeq \ \ .
\end{equation}
Finally, the formula \eqref{eq:n-th-differential-of-Q-complex} provides the $n$-th differential of the desired $Q$-construction.

\subsection{\text{Map} Functor}
\begin{remark}\cite[Corollary 2.10]{Gepner_2015}
   If \(\mathcal{C}\) is an \(\infty\)-category that admits finite products, then \(\text{Grp}_{\mathbb{E}_\infty}(\mathcal{C})\) forms an additive \(\infty\)-category. In particular, \(\text{Grp}_{\mathbb{E}_\infty}(\mathcal{S})\) is an additive \(\infty\)-category and admits direct sums. Since this \(\infty\)-category is equivalent to \(\text{Sp}_{\geq 0}\), the \(\infty\)-category of connective spectra, the direct sum operation agrees with the wedge product of connective spectra.
\end{remark}

\begin{definition}
     A \emph{gap} in \( [n] \in \Delta \) is a pair of elements \((p, q)\) of \( [n] \) such that \( p<q \) and there is no element \( r \) such that \( p < r < q \). 
In fact, the set of gaps of the standard object \([n] = \{0 < 1 < 2 < \cdots < n\}\) is:
\[
(0, 1), (1, 2), \ldots, (n - 1, n).
\]
We denote by \(\mathrm{Gap}([n])\) for the set of gaps of \( [n] \). The set \(\mathrm{Gap}(n)\) is naturally totally ordered set by:
\[
(p, q) < (p', q') \iff p < p'.
\]
\end{definition}

Let \( M \) be an \(\mathbb{E}_{\infty}\)-monoid in some $\ii$-category $\CC$ with finite products. The \emph{classifying space} \( BM \) of \( M \), defined as follows, naturally inherits the structure of an \(\mathbb{E}_{\infty}\)-monoid:
\[BM: \operatorname{N}(\text{Fin}_{*}) \to \mathcal{S}\]
\[
BM(\langle i \rangle) := \mathrm{colim}_{[n] \in \Delta^{\mathrm{op}}} M(\langle i \rangle \wedge_{\text{Fin}_{*}} \langle \mathrm{Gap}([n])\rangle )
\]
By iteration, we define  
\[
B^kM := \underbrace{B(B(\dots(BM)\dots))}_{\text{$k$ times}},
\]
which is an object in \(\mathrm{Mon}_{\mathbb{E}_{\infty}}(\mathcal{C})\). In particular, when \(\mathcal{C}\) is the \(\infty\)-category of spaces \(\mathcal{S}\), for a given \(\mathbb{E}_{\infty}\)-space \(M\), one can define the following functor:
\[B^\ii: \text{Mon}_{\mathbb{E}_{\ii}}(\mathcal{S}) \to \text{Sp}_{\geq 0}\]
\[
B^{\infty}M := \{B^kM\}_{k \geq 1} \quad , \quad \sigma_k \colon B^kM \xrightarrow{\sim} \Omega B^{k+1}M
.\] 
By the \emph{Recognition Principle for Connective Spectra}, the functor \( B^\infty \) restricts to the equivalence
 \(\text{Grp}_{\mathbb{E}_\ii}(\mathcal{S}) \simeq \text{Sp}_{\geq 0}\).

We now define the functor 
\begin{equation} \label{eq:Map-functor}
\operatorname{Map}(-):\text{Exact}_\ii \to \text{Sp} \quad , \quad \operatorname{Map}(\CC):= B^\ii [\bigoplus_{x\in\CC_0}\operatorname{Map}_\CC (x,x)] \simeq \bigvee_{x \in \CC_0} B^{\ii}[\operatorname{Map}_{\CC}(x,x)] , \end{equation}
given by the composition of functors:
\[\text{Exact}_{\ii} \xrightarrow{\oplus} \text{Grp}_{\mathbb{E}_{\ii}}(\mathcal{S}) \xrightarrow{B^\ii} \text{Sp}_{\geq 0} \xrightarrow{i} \text{Sp} , \] 
where, $\oplus$ is given by the direct sum of mapping spaces $\operatorname{Map}_{\CC}(x,x)$ over all objects $x \in \CC$ and $i$ is the inclusion functor.

For a fixed exact \(\infty\)-category \(\mathcal{C}\), applying the \(Q\)-construction to this functor yields a connective, coherent chain complex \(Q(\mathrm{Map}; \mathcal{C})\), where the \(n\)-th term is defined as follows:
\[Q_n(\text{Map};\CC):=(\bigvee_{F\in {S_2}^{(n)}\CC} B^{\ii}[\operatorname{Map_{{S_2}^{(n)}\CC}(F,F)}]) \vee_{\text{Sp}} (\bigvee_{G\in {S_2}^{(n-2)}\CC} B^{\ii}[\operatorname{Map_{{S_2}^{(n-2)}\CC}(G,G)}]) \vee_{\text{Sp}} (\bigvee_{G\in {S_2}^{(n-2)}\CC} B^{\ii}[\operatorname{Map_{{S_2}^{(n-2)}\CC}(G,G)}])\]
Moreover, for an arbitrary object \(F \in S_2^{(n)}\mathcal{C}\), the face functor 
\(
d_k(l) \colon S_2^{(n)}\mathcal{C} \to S_2^{(n-1)}\mathcal{C}
\)
(for some appropriate \(k\) and \(l\)) induces the following map of spaces:
\[ {d_k}(l)_F : \operatorname{Map}_{{S_2}^{(n)}\CC}(F,F) \to \operatorname{Map}_{{S_2}^{(n-1)}\CC}(d_k(l)(F),d_k(l)(F))\]
By the exactness of \(S_2^{(n)}\mathcal{C}\), both spaces above can be regarded as grouplike \(\mathbb{E}_{\infty}\)-spaces. Consequently, \((d_k(l))_F\) is a morphism in \(\mathrm{Grp}_{\mathbb{E}_{\infty}}(\mathcal{S})\). Applying the functor \(B^\infty\), we obtain a map of spectra:
\[B^\ii[{d_k}(l)_F] : B^\ii[\operatorname{Map}_{{S_2}^{(n)}\CC}(F,F) ]\to B^\ii[\operatorname{Map}_{{S_2}^{(n-1)}\CC}(d_k(l)(F),d_k(l)(F))].\]
Taking the direct sum (wedge sum, in this context) over all objects in \(S_2^{(n)}\mathcal{C}\), the functor \(\mathrm{Map}(d_k(l))\) is defined as follows:
\[\text{Map}(d_k(l)): \text{Map}({S_2}^{(n)}\CC) \to \text{Map}({S_2}^{(n-1)}\CC ) \quad , \quad  \text{Map}(d_k(l)):= \bigvee_{F \in {S_2}^{(n)}\CC}B^\ii[d_k(l)_F]\]
applying the similar computation to the degeneracy fucntors $s_0(1) , s_1(1) : S_2^{(n-1)}\mathcal{C} \to S_2^{(n)}\mathcal{C}$, we obtain the following functors:
\[\text{Map}(s_i(1)):=\bigvee_{F\in S_2^{(n-1)}\mathcal{C}}B^\ii[s_i(1)_F] \quad ; \quad i=0 ,1\]
\[\begin{pmatrix}
    \hat{s_0} &\hat{s_1}
\end{pmatrix}:= \begin{pmatrix}
    \bigvee_{F\in S_2^{(n-1)}\mathcal{C}}B^\ii[s_0(1)_F] & \bigvee_{F\in S_2^{(n-1)}\mathcal{C}}B^\ii[s_1(1)_F]
\end{pmatrix}.\]

\subsection{Eilenberg-Mac Lane Functor}
\begin{remark}
    To any abelian group \( G \), we can associate a spectrum \( HG \), known as the \emph{Eilenberg--Mac Lane spectrum}. The \( l \)-th space of this spectrum is given by the Eilenberg--Mac Lane space \( K(G, l) \). Hence:

    \begin{equation*}
        \pi_i(HG_k):=\begin{cases}
            G  & i=l \\
            * & i\neq l
        \end{cases}
    \end{equation*}
    Moreover, the functor $H(-):\text{Ab} \to \text{Sp}$ is an additive and fully faithful functor.
\end{remark}

As another example we use the Eilenberg Mac Lane spectrum to define a functor from $\text{Exact}_\ii$ to $\text{Sp}$. Define 
\begin{equation} \label{eq:Eilenberg-Mac-Lane-functor}
    Z : \text{Exact}_\ii \to \text{Sp} \ \ ; \ \ Z(\CC) := H(\mathbb{Z}[\CC])
\end{equation}
Where $\mathbb{Z}[-]$ denotes the reduced free abelian group generated by the $0$-simplicies. By the additivity of the Eilenberg-Mac Lane functor, we can use formula \eqref{eq:n-th-term-of-Q-complex} to define $Q_n(Z; \mathbb{C})$ as follows:
     \begin{align*}
     Q_n(Z;\CC)&:=Z({S_2}^{(n)}\CC) \vee_{\text{Sp}}Z({S_2}^{(n-2)}\CC) \vee_{\text{Sp}}Z({S_2}^{(n-2)}\CC) 
\\
&:= H(\mathbb{Z}[{S_2}^{(n)}\CC]) \vee_{\text{Sp}} H(\mathbb{Z}[{S_2}^{(n-2)}\CC])    \vee_{\text{Sp}}    H(\mathbb{Z}[{S_2}^{(n-2)}\CC])\\
\text{(additivity)}&\simeq H(\mathbb{Z}[{S_2}^{(n)}\CC] \oplus \mathbb{Z}[{S_2}^{(n-2)}\CC] \oplus \mathbb{Z}[{S_2}^{(n-2)}\CC])
    \end{align*}
Note that for the cases $n=0,1$, we have $Q_0(Z;\CC):=H(\mathbb{Z[\CC]})$ and $Q_1(Z;\CC):=H(\mathbb{Z}[S_2\CC]) $ . 
  
By applying the functor $\mathbb{Z}$ to the given functor $d_k(l): S_2^{(n)} \mathcal{C} \to S_2^{(n-1)}$, we obtain the natural homomorphism of Abelian groups:
\begin{equation*}
    \mathbb{Z}[d_k(l)] : \mathbb{Z}[S_2^{(n)}\mathcal{C}] \to \mathbb{Z}[S_2^{(n-1)}\CC] .
\end{equation*}
Next, applying the functor $H$ to this homomorphism yields a map of Eilenberg-Mac Lane spectra:
\begin{align*}
   Z(d_k(l)) := H(\mathbb{Z}[d_k(l)] ): H(\mathbb{Z}[S_2^{(n)}\mathcal{C}]) \to H(\mathbb{Z}[S_2^{(n-1)}\CC]) .
\end{align*}
The $m$-th spaces (Kan complexes) are given by the Eilenberg Mac Lane objects in the $\infty$-category of spaces:
\begin{equation*}
    H(\mathbb{Z}[S_2^{(n)}\mathcal{C}])_m \simeq K(\mathbb{Z}[S_2^{(n)}\mathcal{C}],m) \ \ \ ; \ \ \ H(\mathbb{Z}[S_2^{(n-1)}\mathcal{C}])_m \simeq K(\mathbb{Z}[S_2^{(n-1)}\mathcal{C}],m).
\end{equation*}
Using the equivalence:
\begin{equation*}
    \text{Map}_{\mathcal{S}_*}(K(G,i),K(G',j)) \simeq \begin{cases}
        * \  (\text{contractible}) & i \neq j \\
       K( \operatorname{Hom}_{\text{Ab}}(G , G' ) , 0) & i=j
    \end{cases} \ \ ,
\end{equation*} 
the $m$-th map between $m$-th spaces corresponds to the map induced by the natural map $\mathbb{Z}[d_k(l)]$ described above. Furthermore, by the additivity of $H$ , we have the equivalence:
\begin{equation*}
    Z(\delta_n) \simeq H(\sum (\mathbb{Z}[d_k(l)]) 
\end{equation*}
where $\sum (\mathbb{Z}[d_k(l)]$ represents the addition of homomorphisms of Abelian groups, as specified by formula \eqref{eq:alternating-sum} for appropriate $k$ and $l$. Once again, using the additivity of $H$ we have the following equivalence:
\begin{equation*}
    \begin{pmatrix}
        Z(s_0(1)) & Z(s_1(1))\end{pmatrix} \simeq H\begin{pmatrix} \mathbb{Z}[s_0(1)] &\mathbb{Z}[s_1(1)]
            
           \end{pmatrix}
\end{equation*}
Considering all the mentioned equivalences, we can obtain the differentials using the formula \eqref{eq:n-th-differential-of-Q-complex}.

\phantomsection  % required if using hyperref
\addcontentsline{toc}{section}{References}  
\printbibliography
\end{document}